\newcommand{\eps}{{\varepsilon}}
\theoremstyle{plain}
\newtheorem{theorem}{Theorem}
\newtheorem{proposition}[theorem]{Proposition}
\newtheorem{lemma}[theorem]{Lemma}
\theoremstyle{definition}
\newtheorem{definition}[theorem]{Definition}
\newtheorem{remark}[theorem]{Remark}
\newcommand{\qtq}[1]{\quad\text{#1}\quad}
\numberwithin{equation}{section}
\numberwithin{theorem}{section}
\let\Re=\undefined\DeclareMathOperator*{\Re}{Re}
\let\Im=\undefined\DeclareMathOperator*{\Im}{Im}
\newcommand{\F}{\mathcal{F}}
\def\ge{\geqslant}
\def\le{\leqslant}
\def\geq{\geqslant}
\def\leq{\leqslant}
\def\R{\mathbb{R}}
\def\C{\mathbb{C}}
\def\eps{\varepsilon}
\begin{document}

\title[4D cubic NLS]{Dynamics of subcritical threshold solutions \\ for the 4$d$ energy-critical NLS}

\author[Z. Ma]{Zuyu Ma}
\address{Graduate School of China Academy of Engineering Physics, Beijing 100088, China}
\email{mazuyu23@gscaep.ac.cn}

 \author[C. Miao]{Changxing Miao}
\address{Institute of Applied Physics and Computational Mathematics and National Key Laboratory of Computational Physics, Beijing 100088, China}
\email{miao\_changxing@iapcm.ac.cn }

\author[J. Murphy]{Jason Murphy}
\address{Department of Mathematics, University of Oregon, Eugene, OR, USA}
\email{jamu@uoregon.edu}

\author[J. Zheng]{Jiqiang Zheng}
\address{Institute of Applied Physics and Computational Mathematics and National Key Laboratory of Computational Physics, Beijing 100088, China}
\email{zheng\_jiqiang@iapcm.ac.cn}

\subjclass[2020]{35Q55}
\keywords{energy-critical, threshold dynamics, scattering, interaction Morawetz estimate}
\maketitle 

\begin{abstract}
We study dynamics of the 4$d$ energy-critical nonlinear Schr\"odinger equation at the ground state energy. Previously, Duyckaerts and Merle [Geom. Funct. Anal. (2009)] proved that any radial solution with kinetic energy less than that of the ground state either scatters in both time directions or coincides (modulo symmetries) with a heteroclinic orbit, which scatters in one time direction and converges to the ground state in the other.  We extend this result to the non-radial setting.
\end{abstract}

\section{Introduction}

We consider the $4d$ focusing energy-critical nonlinear Schr\"odinger equation (NLS)
\begin{equation}\label{1.1}
\begin{cases}
(i\partial_t+\Delta) u=-|u|^2u,\quad (t,x)\in\R\times\R^4,\\
u|_{t=0}=u_0\in \dot{H}^1(\R^4),
\end{cases}
\end{equation}
which is a special case of the more general energy-critical equation
\begin{align} \label{equ:encrinls}
\begin{cases}
(i\partial_t+\Delta)u= -|u|^{\frac4{d-2}} u,\quad (t,x)\in\R\times\R^d,\\
u|_{t=0}=u_0\in \dot{H}^1(\R^d),
\end{cases}
\end{align}
in dimensions $d\geq 3$.  Equation \eqref{equ:encrinls} is the Hamiltonian flow corresponding to the (conserved) energy
\begin{equation}\label{equ:energydef}
E(u(t))=\tfrac12\int_{\R^d}|\nabla u(t,x)|^2\;dx-\tfrac{d-2}{2d}\int_{\R^d}|u(t,x)|^\frac{2d}{d-2}\;dx.
\end{equation}

This equation admits a stationary solution in $\dot H^1$ known as the \emph{ground state}, given explicitly by 
\begin{equation}
\label{defW}
W(x):= \bigl(1+\tfrac{|x|^2}{d(d-2)}\bigr)^{-\frac{d-2}{2}}.
\end{equation}
The ground state satisfies the elliptic equation
\begin{equation}\label{equ:CP}
\Delta W+|W|^\frac{4}{d-2}W=0
\end{equation}
and admits a variational characterization in terms of the sharp Sobolev embedding estimate.  In particular, writing $C_d$ for the sharp constant in dimension $d$, we have
\begin{gather}
\label{SobolevIn}
\|u\|_{L^\frac{2d}{d-2}(\R^d)}\leq C_d\|u\|_{\dot{H}^1(\R^d)}\qtq{for all}u\in \dot H^1(\R^d),
\end{gather}
with equality if and only if 
\[
u(x)=z_0  W\big(\lambda({x+y})\big)
\]
for some $(\lambda,z_0,y)\in(0,\infty)\times\C\times\R^d$ (see \cite{Au76, Ta76}).

The ground state plays an important role in classifying the dynamics of solutions to \eqref{equ:encrinls}.  For solutions with energy below that of the ground state, one has the following theorem:

\begin{theorem}[Dynamics below ground state energy, \cite{kenig-merle1, KV1, dodson1}]\label{thm:KM} Let $d\geq 3$ and $u_0\in \dot H^1(\R^d)$ such that $E(u_0)<E(W)$. We also  assume that $u_0$ is radial if $d=3$.
\begin{itemize}
\item[(i)] If $\|\nabla u_0\|_{L^2}<\|\nabla W\|_{L^2}$, then the solution to \eqref{equ:encrinls} with initial data $u_0$ is global and scatters\footnote{We say $u$ scatters as $t\to\pm\infty$ if there exist $u_\pm\in \dot H^1$ such that $\lim_{t\to\pm\infty}\|u(t)-e^{it\Delta}u_\pm\|_{\dot H^1}=0$, where $e^{it\Delta}$ is the free Schr\"odinger propagator.} in both time directions.
\item[(ii)] If $u_0\in H^1$ is radial and $\|\nabla u_0\|_{L^2}>\|\nabla W\|_{L^2}$, then the solution to \eqref{equ:encrinls} with initial data $u_0$ blows up in finite time in both time directions. 
\end{itemize}
\end{theorem}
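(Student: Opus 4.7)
The plan is to combine a sharp variational analysis of the ground state with the Kenig-Merle concentration-compactness/rigidity scheme, supplemented in the non-radial case by the refinements of Killip-Visan and Dodson. The first step, common to both parts, is a variational trapping lemma: using the Pohozaev identities for \eqref{equ:CP} (which give $\|W\|_{L^{2d/(d-2)}}^{2d/(d-2)} = \|\nabla W\|_{L^2}^2$ and $E(W) = \tfrac{1}{d}\|\nabla W\|_{L^2}^2$) together with the sharp Sobolev inequality \eqref{SobolevIn}, one shows that the two sets
\[
\mathcal{A}_\pm := \bigl\{u \in \dot H^1(\R^d) : E(u) < E(W),\ \pm(\|\nabla u\|_{L^2}^2 - \|\nabla W\|_{L^2}^2) > 0\bigr\}
\]
are open, disjoint, and invariant under the flow. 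On $\mathcal{A}_-$ one gets the uniform kinetic-energy bound $\|\nabla u(t)\|_{L^2}^2 \leq (1-\eta)\|\nabla W\|_{L^2}^2$, which in particular forces solutions to be global in $\dot H^1$; on $\mathcal{A}_+$ one gets the coercive gap $\|u(t)\|_{L^{2d/(d-2)}}^{2d/(d-2)} \geq \|\nabla W\|_{L^2}^2 + \delta$ with $\eta,\delta>0$ depending only on $E(W)-E(u_0)$.

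For part (i) I would argue by contradiction in the Kenig-Merle style. Small-data Strichartz theory disposes of the low-kinetic-energy case. Assuming scattering fails on $\mathcal{A}_-$, let $E_c<E(W)$ be the infimum of energies of non-scattering solutions; via the linear profile decomposition and the nonlinear perturbation/stability lemma, one extracts a \emph{critical element} $u_c$ of energy $E_c$ whose orbit is precompact in $\dot H^1$ modulo the non-compact symmetries (scaling and, in the non-radial case, spatial translation). A rigidity theorem then rules out such a minimal blowup solution by testing a virial or interaction Morawetz identity against the compact orbit: the coercivity from the trapping step upgrades the Morawetz monopoly into a strictly positive quantity that contradicts the uniform $\dot H^1$ bound on $u_c$. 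In $d=3$ radiality eliminates the translation modulation; for $d\geq 4$ one invokes the frequency-localized Morawetz of Killip-Visan ($d\geq 5$) and the long-time Strichartz / interaction Morawetz of Dodson ($d=4$) to handle translations without symmetry.

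For part (ii), under the radial $H^1$ assumption the standard virial identity
\[
\tfrac{d^2}{dt^2}\int_{\R^d}|x|^2 |u(t,x)|^2\,dx = 16 E(u_0) - \tfrac{16}{d}\|u(t)\|_{L^{2d/(d-2)}}^{2d/(d-2)}
\]
is justified (using the Ogawa-Tsutsumi truncation together with radial Sobolev estimates when $xu_0 \notin L^2$). The trapping gap bounds the right-hand side by a strictly negative constant, so $V(t):=\int|x|^2|u(t)|^2\,dx$ is strictly concave and must become negative in finite time, contradicting positivity. Hence the solution blows up in finite time in both time directions.

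The main obstacle is the rigidity step for part (i) in the absence of radial symmetry: while the variational trapping is elementary and extracting the critical element is by now standard, ruling out the minimal blowup solution in the presence of spatial translation parameters requires the delicate long-time Strichartz and interaction Morawetz analysis of Dodson in $d=4$ and the frequency-localized Morawetz of Killip-Visan in $d\geq 5$. In $d=3$ these tools are not currently sharp enough, which is precisely why radiality is assumed in Theorem \ref{thm:KM}.
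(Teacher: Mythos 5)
This theorem is quoted in the paper as a known result from \cite{kenig-merle1, KV1, dodson1} and is not proved there, so there is no internal proof to compare against. Your outline — variational trapping via Pohozaev and sharp Sobolev, the Kenig--Merle concentration-compactness/rigidity scheme with the Killip--Vi\c{s}an and Dodson refinements for the non-radial cases of (i), and the (truncated) virial argument for (ii) — is a faithful and accurate summary of how those references establish the result, including the correct identification of why radiality is needed in $d=3$.
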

These results were first obtained by Kenig and Merle in \cite{kenig-merle1} in dimensions $d\in\{3,4,5\}$ for radial data.  Killip and Vi\c{s}an \cite{KV1} subsequently treated the non-radial case in dimensions $d\geq 5$, and Dodson \cite{dodson1} addressed the non-radial case in $d=4$.  The non-radial case for $d=3$ remains a major open problem in the field.  An essential difficulty stems from the fact that while $W\in H^1$ for $d\geq 5$, one only has $W\in \dot H^1 \cap L^{2,\infty}$ in $d=4$ and $W\in \dot H^1\cap L^{3,\infty}$ in $d=3$. Let us also remark that in the defocusing case, one obtains scattering for all finite energy solutions (see \cite{Visan3, Tao1, Visan2, Ryckman, KV2, bourgain, CKSTT}).

Dynamics at the ground state energy have also been classified, albeit primarily in the radial setting\footnote{In fact, the work \cite{SuZhao} is related to threshold dynamics in the non-radial case in dimensions $d\geq 5$.  We will discuss this reference below.} \cite{duyckaerts1, li1, campos}. The classification involves two additional special solutions, namely, heteroclinic orbits denoted by $W^\pm\in \dot H^1$.  These solutions satisfy $E(W^\pm)=E(W)$ and behave as follows:  the solution $W^-$ scatters as $t\to-\infty$ and converges to $W$ in $\dot H^1$ as $t\to+\infty$, while $W^+$ blows up in finite time in the negative time direction and converges to $W$ in $\dot H^1$ as $t\to+\infty$.

\begin{theorem}[Radial dynamics at ground state energy \cite{duyckaerts1, li1, campos}]\label{thm:claener}
Let $d\geq 3$. Let $u_0\in \dot{H}^1$ be radial and satisfy $E(u_0)=E(W)$, and let $u:I\times\R^d\to\C$ be the maximal-lifespan solution to \eqref{equ:encrinls} with $u|_{t=0}=u_0$.
\begin{enumerate}
\item[(i)] If $\|\nabla u_0\|_{L^2}<\|\nabla W\|_{L^2}$, then $I=\R$. Furthermore, either $u$ scatters in both time directions or $u=W^-$ modulo symmetries.
\item[(ii)] If $\|\nabla u_0\|_{L^2}=\|\nabla W\|_{L^2}$, then $u=W$ modulo symmetries.
\item[(iii)] If $u_0\in L^2$ and $\|\nabla u_0\|_{L^2}>\|\nabla W\|_{L^2}$, then either $I$ is finite or $u=W^+$ modulo symmetries. 
\end{enumerate}
\end{theorem}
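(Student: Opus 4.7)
The plan is to treat Part (ii) by a variational argument and Parts (i) and (iii) by the Duyckaerts--Merle compactness/rigidity scheme. For Part (ii), I would use the fact that testing \eqref{equ:CP} against $W$ gives $\|W\|_{L^{2d/(d-2)}}^{2d/(d-2)}=\|\nabla W\|_{L^2}^2$, hence $E(W)=\tfrac1d\|\nabla W\|_{L^2}^2$. Combined with $E(u_0)=E(W)$ and $\|\nabla u_0\|_{L^2}=\|\nabla W\|_{L^2}$, this forces $\|u_0\|_{L^{2d/(d-2)}}=\|W\|_{L^{2d/(d-2)}}$, i.e.\ equality in \eqref{SobolevIn}. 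The Aubin--Talenti rigidity then gives $u_0=z_0W(\lambda(\cdot+y))$; radial symmetry kills $y$, the kinetic constraint fixes $\lambda$, and $z_0$ is absorbed by the phase symmetry of \eqref{equ:encrinls}, yielding $u_0=W$ modulo symmetries.

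For Parts (i) and (iii), the first ingredient is the construction of the heteroclinic orbits $W^\pm$. Linearizing \eqref{equ:encrinls} about $W$ produces a non-self-adjoint operator $\mathcal{L}$ whose radial spectrum contains exactly one pair of simple real eigenvalues $\pm e_0$ with eigenfunction $\mathcal{Y}\in\dot H^1$; the remaining spectrum is neutral (symmetry modes) or absolutely continuous. A fixed-point argument on the ansatz $W+ae^{-e_0 t}\mathcal{Y}+O_{\dot H^1}(e^{-2e_0 t})$ for $a\in\{\pm 1\}$ produces solutions $W^\pm$ converging to $W$ in $\dot H^1$ as $t\to+\infty$, with $\|\nabla W^+\|_{L^2}>\|\nabla W\|_{L^2}>\|\nabla W^-\|_{L^2}$. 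The same contraction also delivers a crucial uniqueness statement: any forward-global $\dot H^1$ solution converging exponentially to $W$ equals $W^\pm$ modulo symmetries. The second ingredient is a compactness property for non-scattering threshold solutions. Suppose $u$ is a case-(i) solution failing to scatter as $t\to+\infty$; a Kenig--Merle linear profile decomposition in $\dot H^1$, combined with the subthreshold scattering in Theorem \ref{thm:KM}(i), produces modulation parameters $(\lambda(t),\theta(t))$ so that $\{e^{-i\theta(t)}\lambda(t)^{-(d-2)/2}u(t,\lambda(t)^{-1}\cdot)\}$ is precompact in $\dot H^1$, translation being removed by the radial assumption. Variational analysis at the threshold forces all limit points onto the $W$-orbit, giving qualitative convergence $u(t)\to W$ modulo modulation. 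Case (iii) is mirror-symmetric: the $L^2$ assumption together with mass conservation replaces the variational bound in the compactness step, yielding qualitative convergence to $W$ modulo modulation along the non-blow-up direction.

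The main obstacle is upgrading this qualitative convergence to an exponential rate. I would decompose $u(t)=W_{\lambda(t),\theta(t)}+h(t)$ with $h$ orthogonal (in the natural symplectic form) to the symmetry directions and to $\mathcal{Y}$. Because the unstable direction must carry no amplitude along a threshold solution---otherwise the perturbation would grow exponentially and contradict the energy constraint---the linearized energy is coercive on $h$; coupling this coercivity with $E(u)=E(W)$ and a virial/Morawetz-type identity produces a differential inequality forcing $\|h(t)\|_{\dot H^1}\lesssim e^{-ct}$. Combined with the uniqueness statement above, this gives $u=W^-$ in case (i) and $u=W^+$ in case (iii), modulo symmetries. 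The analytic heart of the argument is the coercivity estimate, which rests on a sharp spectral analysis of $\mathcal{L}$ on the symplectic complement of the symmetry directions; an additional subtlety, most acute in $d=4$ (the setting of \cite{duyckaerts1}), is that $W\notin L^2$, forcing modulation, orthogonality and coercivity to be executed entirely in the critical $\dot H^1$ frame rather than the more convenient $H^1$ framework.
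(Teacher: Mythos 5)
Your treatment of part (ii) is correct and is exactly the intended argument: the Pohozaev identity gives $E(W)=\tfrac1d\|\nabla W\|_{L^2}^2$, so the two constraints force equality in \eqref{SobolevIn}, and Aubin--Talenti rigidity identifies $u_0$ with the symmetry orbit of $W$ (one small point: the kinetic constraint fixes $|z_0|$ in terms of $\lambda$ rather than fixing $\lambda$ itself, but that is precisely the normalization needed to land in the orbit; the paper notes this case needs no radial assumption). For parts (i) and (iii), your skeleton --- construction of $W^\pm$ together with the uniqueness statement for exponentially converging solutions, compactness via profile decomposition and Theorem~\ref{thm:KM}, modulation and coercivity, then classification --- is the Duyckaerts--Merle scheme that the cited works (and this paper, in its non-radial extension) follow.

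The gap is the sentence ``variational analysis at the threshold forces all limit points onto the $W$-orbit, giving qualitative convergence $u(t)\to W$ modulo modulation.'' This does not follow from compactness plus variational considerations. A limit point $\phi$ of the modulated trajectory satisfies only $E(\phi)=E(W)$ and $\|\nabla\phi\|_{L^2}\leq\|\nabla W\|_{L^2}$; nothing variational excludes strict inequality, in which case $\phi$ is not on the $W$-orbit (indeed $\delta(W^-(t))>0$ for every $t$, so the desired conclusion itself shows the orbit is not approached uniformly). The actual proof must first establish, by a monotonicity argument, that $\delta(u(t_n))\to 0$ along \emph{some} sequence $t_n\to\infty$: in the radial case this is a localized virial computation exploiting compactness (the analogue of Proposition~\ref{sequence} here, which in the non-radial $d=4$ setting requires Dodson's interaction Morawetz estimate), and it is where the strict kinetic-energy bound enters through sharp Sobolev embedding. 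Even then one only has convergence along a sequence, not for all large $t$; a separate dichotomy argument is needed (cf.\ Section~\ref{reduction}): either $\delta(u(t))\leq\eta_*$ eventually, in which case the modulation, exponential-convergence, and rigidity machinery (Theorem~\ref{t2.3}, Proposition~\ref{2WW}) applies, or $\delta$ oscillates between $0$ and $\eta_*$, in which case one extracts a limiting solution blowing up in both time directions while staying near the $W$-orbit forward in time, contradicting the rigidity theorem together with the fact that $W^-$ scatters in one direction. Without these two steps the argument never reaches the modulation regime for a general non-scattering solution. The same issue affects your case (iii), where the virial identity (now with the opposite sign of $\delta$, and with $u_0\in L^2$ used to control the localized variance) is again the engine, not a ``mirror-symmetric'' compactness statement.
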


We may refer to cases (i)--(iii) as the subcritical, critical, and supercritical cases, respectively.  When we say $u=v$ modulo symmetries, we mean that
\[
u(t,x)=e^{i\theta}\lambda^{-\frac{d-2}2}v\big(\tfrac{t-s}{\lambda^2},\tfrac{x-y}{\lambda}\big)\qtq{or} u(t,x)=e^{i\theta}\lambda^{-\frac{d-2}2}\bar{v}\big(\tfrac{s-t}{\lambda^2},\tfrac{x-y}{\lambda}\big)
\]
for some $(\lambda,\theta,y,s)\in(0,\infty)\times\R\times\R^d\times\R$. Note that one always has $y=0$ in the radial setting.  We also remark that case (ii) does not require a radial assumption, in light of the variational characterization of $W$ described above. We also refer to \cite{campos,duyckaerts4} for the energy-subcritical NLS and \cite{dodson5,dodson6} for the mass-critical NLS.

In this work, we remove the radial assumption in case (i) in dimension $d=4$.  In particular, our main result may be stated as follows. 

\begin{theorem}[Subcritical dynamics at ground state energy] \label{t1.1}
Let $d=4$ and $u_0\in \dot H^1$ with $E(u_0)=E(W)$ and $\|\nabla u_0\|_{L^2}<\|\nabla W\|_{L^2}$. 

Let $u:I\times\R^4\to\C$ be the maximal-lifespan solution to \eqref{1.1} with $u|_{t=0}=u_0$. 

Then $I=\R$, and either $u$ scatters in both time directions or $u=W^-$ modulo symmetries.
\end{theorem}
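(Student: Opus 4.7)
The plan is to adapt the Duyckaerts--Merle \cite{duyckaerts1} framework for threshold dynamics to the non-radial $d=4$ setting, combining it with the non-radial tools Dodson \cite{dodson1} developed for the subthreshold regime and the higher-dimensional non-radial threshold analyses of \cite{li1, campos, SuZhao}. Energy conservation, continuity of the $\dot{H}^1$ flow, and the variational characterization \eqref{SobolevIn} imply that $\|\nabla u(t)\|_{L^2} < \|\nabla W\|_{L^2}$ is preserved on the maximal interval, together with a coercive bound
\begin{equation*}
\|\nabla u(t)\|_{L^2}^2 - \|u(t)\|_{L^4(\R^4)}^4 \gtrsim \delta(u(t))^2,
\end{equation*}
where $\delta(u(t))$ measures the $\dot{H}^1$-distance from the ground-state orbit $\{e^{i\theta}\lambda^{-1}W((\cdot-y)/\lambda) : \theta \in \R,\ \lambda > 0,\ y \in \R^4\}$. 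In particular, $u$ is global, and if $\inf_t \delta(u(t)) > 0$, then a concentration-compactness argument combined with Theorem~\ref{thm:KM}(i) forces two-sided scattering. It therefore suffices to show that if there is a one-sided time sequence $t_n \to +\infty$ (say) along which $u$ does not scatter and $\delta(u(t_n)) \to 0$, then $u$ coincides with $W^-$ modulo symmetries.

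\textbf{Compactness near the ground state.} On such a sequence I would apply a linear profile decomposition in $\dot{H}^1(\R^4)$. The Pythagorean-type decomposition of energy, the identity $E(u_0) = E(W)$, and Theorem~\ref{thm:KM}(i) together imply that all profiles except possibly one are strictly subthreshold and therefore scatter, while the remaining profile carries essentially all of the kinetic energy. The variational characterization of $W$ then forces this profile, modulo the full symmetry group (including translations), to equal $W$ exactly, so $u(t_n) \to W$ in $\dot{H}^1$ up to symmetries. Handling the non-radial translation parameter of the surviving profile and extracting quantitative convergence relies on the $d=4$ long-time Strichartz and interaction Morawetz machinery from \cite{dodson1}.

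\textbf{Modulation and identification of $W^-$.} In a neighborhood of the ground-state orbit, I decompose
\begin{equation*}
u(t,x) = e^{i\theta(t)}\lambda(t)^{-1}\Bigl[W\bigl(\tfrac{x-y(t)}{\lambda(t)}\bigr) + \varepsilon\bigl(t, \tfrac{x-y(t)}{\lambda(t)}\bigr)\Bigr]
\end{equation*}
with $\varepsilon$ orthogonal to the tangent space of the symmetry manifold. The linearized operator around $W$ has a simple pair of real eigenvalues $\pm e_0$ with eigenfunctions $\mathcal{Y}_\pm \in \dot{H}^1$; after standard orthogonality choices, coercivity of the linearized energy yields $\|\varepsilon(t)\|_{\dot{H}^1}^2 \lesssim |a_+(t)|^2 + |a_-(t)|^2$, where $a_\pm(t)$ are the projections of $\varepsilon(t)$ onto $\mathcal{Y}_\pm$. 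The modulation equations reduce to approximate ODEs $\dot a_\pm \mp e_0 a_\pm = O(|a|^2)$, and a trapping/trichotomy analysis in the spirit of \cite{duyckaerts1} shows that either $a_\pm \equiv 0$ (excluded by $\|\nabla u_0\|_{L^2} < \|\nabla W\|_{L^2}$), or a single mode dominates, decays exponentially in one time direction, and ejects $u$ from the $\delta$-neighborhood in the other. A uniqueness argument for special solutions converging exponentially to $W$ then identifies $u$ with $W^-$ modulo symmetries.

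\textbf{Main obstacle.} The decisive new difficulty is the failure $W \notin L^2(\R^4)$, which removes both mass conservation and the $L^2$-based coercivity and orthogonality tools used in the higher-dimensional non-radial threshold theory \cite{campos, li1, SuZhao}. This manifests in two places. First, in the compactness step, the translation parameters $y(t_n)$ must be controlled without recourse to mass. Second, in the modulation step, terms that would be handled by $L^2$ pairings in dimensions $d \ge 5$ must instead be estimated via long-time Strichartz and frequency-localized interaction Morawetz bounds. I expect the technical heart of the argument to be such an interaction Morawetz inequality, in the spirit of \cite{dodson1} but tailored to the threshold regime, which simultaneously prevents the translation parameter from escaping the domain of validity of the modulation decomposition and closes the bootstrap on the ODE for $a_\pm$.
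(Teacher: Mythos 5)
Your overall architecture matches the paper's (the Duyckaerts--Merle modulation/rigidity scheme combined with Dodson's non-radial $d=4$ machinery), but two steps that you treat as routine are precisely the crux points, and as written both have genuine gaps. First, global existence does not follow from the uniform bound $\|\nabla u(t)\|_{L^2}<\|\nabla W\|_{L^2}$: for the energy-critical equation a uniform $\dot H^1$ bound is compatible with finite-time blowup of the scattering norm. The paper's Proposition~\ref{T:no ftb} rules this out by showing that a finite-time blowup solution would be almost periodic with $N(t)\to\infty$, whence a localized-mass computation forces it to be an $L^2$ solution with zero mass, contradicting $E(u)=E(W)>0$. Second, and more seriously, your claim that $\inf_t\delta(u(t))>0$ forces scattering ``by concentration-compactness combined with Theorem~\ref{thm:KM}(i)'' cannot be justified that way: Theorem~\ref{thm:KM}(i) requires $E<E(W)$, and the surviving profile/limit extracted here has energy exactly $E(W)$, so the subthreshold scattering theorem never applies to it. The actual mechanism (Proposition~\ref{sequence}) is to reduce to an almost periodic solution with $N(t)\geq 1$, preclude the frequency-cascade scenario by Dodson's long-time Strichartz plus mass argument, and then observe that the lower bound in Dodson's interaction Morawetz estimate for the quasi-soliton scenario uses \emph{only} the strict kinetic-energy inequality $\sup_t\|\nabla u(t)\|_{L^2}<\|\nabla W\|_{L^2}$ via sharp Sobolev embedding, and never the hypothesis $E<E(W)$ (Proposition~\ref{P:nscqs}). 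That single observation is what makes the scheme work at threshold energy, and it is absent from your write-up.

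Relatedly, you have misplaced where the interaction Morawetz estimate enters. It is used only for the sequential convergence $\delta(u(t_n))\to 0$ just described; the modulation bootstrap is closed with an ordinary localized virial quantity $M_R(u)=2\Im\int\nabla\varphi_R\cdot\nabla u\,\overline{u}\,dx$. The failure of $W\in L^2(\R^4)$ is handled there not by long-time Strichartz but by the elementary observation that $M_R$ and the virial error terms vanish identically on the modulated ground state $\tilde W(t)$, so that $|M_R(u(t))|\lesssim R^2\delta(u(t))$ and the errors are $o(\delta(u(t)))$ once $R\sim\lambda_{\min}^{-1}$; this yields $\int\delta(u(t))\,dt\lesssim\sup_t\delta(u(t))$, near-constancy of $\lambda(t)$ and $x(t)$, and then exponential decay (Propositions~\ref{bootstrap} and~\ref{exp}). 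Finally, even granting the sequence $\delta(u(t_n))\to 0$, it does not suffice to treat only solutions trapped near the orbit: one must also exclude the case where $\delta$ returns to size $\eta_*$ along another sequence, which the paper does in Section~\ref{reduction} by extracting a limiting solution that is forward-trapped yet blows up in both time directions, contradicting Theorem~\ref{t2.3}; your appeal to an unproved ejection/trichotomy alternative would need to be substantiated to cover this case.
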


The proof of Theorem~\ref{t1.1} employs the general framework introduced in \cite{duyckaerts1}, leveraging the analysis of \cite{dodson1} in order to address the non-radial case.  In the appendix, we will also discuss the extension of Theorem~\ref{t1.1} to dimensions $d\geq 5$.  In fact, this problem has previously been studied in high dimensions in \cite{SuZhao}, although we contend that this work contains some gaps that must be filled in order to complete the proof (e.g. the solution $W^-$ is a counterexample to Theorem~2.10 therein).  In Appendix~\ref{S:highd}, we therefore describe some of the key ingredients necessary for the extension to dimensions $d\geq 5$.  The high-dimensional problem is ultimately simpler due to the fact that the ground state belongs to $L^2$ provided $d\geq 5$.  Correspondingly, one can prove better decay properties for almost periodic solutions, leading to a more direct implementation of virial-type arguments.  On the other hand, the nonlinearity becomes less regular in high dimensions, which does lead to some technical challenges.  Fortunately, it is known how to deal with such issues (cf. \cite{Visan2, KV1, li1,campos, Tao1, SuZhao}).

Before moving on to the outline of the proof of Theorem~\ref{t1.1}, let us discuss one other related problem, namely, the characterization of the upper bounds of scattering norms as one approaches the ground state energy.  Following Duyckaerts and Merle \cite{duyckaerts}, we may consider the following quantity for any $\eps>0$:
\[
\mathcal{I}_\eps = \sup_{u\in F_\eps} \iint |u(t,x)|^{\frac{2(d+2)}{d-2}}\,dx\,dt,
\]
where $F_\eps$ is the set of solutions to \eqref{equ:encrinls} such that 
\[
E(u_0)\leq E(W)-\eps^2\qtq{and}\|\nabla u_0\|_{L^2}<\|\nabla W\|_{L^2}.
\]
In the case $d\in\{3,4,5\}$ and radial solutions, \cite[Theorem~2]{duyckaerts} shows that
\[
\lim_{\eps\to 0+} \tfrac{\mathcal{I}_\eps}{|\log\eps|} = \tfrac{2}{\lambda_1}\int |W|^{\frac{2(d+2)}{d-2}}\,dx,
\]
where $\lambda_1$ is the unique positive eigenvalue of the linearized operator $\mathcal{L}$ (cf. \eqref{defL} below).  The analysis in the present paper extends this result to the non-radial setting in dimensions $d\geq 4$. 


\subsection{Outline proof of Theorem \ref{t1.1}} 

In this subsection, we sketch the proof of Theorem~\ref{t1.1} and outline the structure of the paper.  Throughout this subsection, we always consider solutions $u$ satisfying $E(u)=E(W)$ and $\|\nabla u_0\|_{L^2}<\|\nabla W\|_{L^2}$. 

We first establish global well-posedness (Proposition~\ref{T:no ftb}). This relies on the ever useful Lemma~\ref{alcompact}, which shows that due to the sub-threshold scattering result, blowup (in the sense of infinite scattering norm) guarantees almost periodicity.  In particular, a finite-time blowup solution must be almost periodic.  Moreover, to blow up in finite time it must escape to arbitrarily small spatial scales, in the sense that its frequency scale function must satisfy $N(t)\to\infty$.  For such a solution, we can apply an argument based on the localized mass to prove that the solution is in fact an $L^2$ solution, with mass equal to \emph{zero}.  This is a contradiction, from which we conclude global existence.     

To prove Theorem~\ref{t1.1}, it therefore suffices to show that any solution that blows up (in the sense of infinite scattering norm) in at least one time direction must equal $W^-$ modulo symmetries. A key step is taken in Proposition~\ref{sequence}, which states that for such a solution, there must exist a sequence of times $\{t_n\}\subset\R$ such that
\begin{equation}\label{intro-delta1}
\delta(u(t_n)):=\bigl| \|\nabla W\|_{L^2}^2 - \|\nabla u(t_n)\|_{L^2}^2 \bigr| \to 0. 
\end{equation}
Using Lemma~\ref{alcompact} and some standard arguments related to well-posedness, we can reduce matters to proving that this is the case for a solution that is almost periodic on $\R$ with frequency scale function $N(t)$ satisfying $\inf_{t\in\R}N(t)\geq 1$.  

At this point, we appeal to the work of Dodson concerning solutions with energy below that of the ground state \cite{dodson1}.  We consider two possible scenarios based on the behavior of $N(t)$, which are called the \emph{frequency cascade} and \emph{quasi-soliton} scenarios.  The analysis of \cite{dodson1} shows that the cascade case is impossible, independent of the size of the energy.  Thus we only need to consider the quasi-soliton case.  In this case, Dodson proves via an interaction Morawetz estimate that if the \emph{kinetic} energy stays strictly below that of the ground state, the solution must vanish identically (see Proposition~\ref{P:nscqs}).  Applying this result in our setting, we conclude that there must be some sequence of times along which the kinetic energy converges to that of $W$, yielding \eqref{intro-delta1}.  It is a somewhat subtle point that the argument depends only on the kinetic energy assumption (rather than the full energy), so we include in Section~\ref{S:Dodson} an abbreviated presentation of Dodson's interaction Morawetz estimate. 

After collecting some preliminaries related to modulation analysis in Section~\ref{S:modulation}, the next key ingredient is a type of rigidity result, Theorem~\ref{t2.3}.  This result states that if a solution stays sufficiently close to the ground state in one time direction (as measured by the functional $\delta(u(t))$), then $u$ must equal $W^-$ modulo symmetries.  With this result in hand, we complete the proof of Theorem~\ref{t1.1} as follows.  If the solution stays sufficiently close to ground state, Theorem~\ref{t2.3} applies directly to conclude the proof.  If instead the solution continues to maintain some distance from the ground state along some sequence $t_n'\to\infty$, then by utilizing the two sequences $t_n$ (along which $\delta(u(t_n))\to 0$) and $t_n'$ (along which $\delta(u(t_n'))\geq\eta_*>0$) we can extract a solution that stays very close to the ground state for all times yet blows up in both time directions. However, the existence of such a solution contradicts Theorem~\ref{t2.3} and the fact that $W^-$ scatters in one time direction.  Thus this second alternative cannot occur, and we so complete the proof of Theorem~\ref{t1.1}.

It remains to describe the proof of Theorem~\ref{t2.3}.  The radial analogue of this result essentially appears in \cite{duyckaerts1} and plays a similar central role in that work.  We follow the general strategy of \cite{duyckaerts1}, making the necessary modifications to address the non-radial setting.  The first step is to utilize the `modulated' virial estimate, a version of the localized virial estimate that takes into account the fact that the solution stays near the orbit of the ground state.  Using this estimate, we firstly establish that the modulation parameters are nearly constant.  After establishing this fact, another application of the modulated virial estimate (together with the fact that $\delta(u(t_n))\to 0$ along some sequence $t_n\to\infty$) implies exponential decay of $\delta(u(t))$ (and exponential convergence of the modulation parameters).  This is the essential input needed to apply the rigidity result of \cite{duyckaerts1}, stating roughly that exponential convergence to the ground state guarantees that the solution must be exactly $W^-$ (modulo symmetries).  The necessary result is stated as Proposition~\ref{2WW}, which appears exactly in \cite{duyckaerts1} in the radial case.  In Section~\ref{S:2WW}, we sketch the strategy of the proof in \cite{duyckaerts1}, pointing out the changes necessary to address the non-radial case. 

Finally, as mentioned above, the extension to dimensions $d\geq 5$ is outlined in Appendix~\ref{S:highd}. 

\subsection*{Acknowledgements} C. Miao was supported by the National Key RR\&D program
of China: No.2021YFA1002500 and NSFC Grant 12026407. J.M. was supported by NSF grant DMS-2350225. J. Zheng was supported by  National key R\&D program of China: 2021YFA1002500 and  NSFC Grant 12271051.

\section{Notation and useful lemmas}

In this section, we collect some analysis tools and some fundamental results needed throughout the paper.

\subsection{Notation} For nonnegative quantities $X$ and $Y$, we write $X\lesssim Y$ to denote the estimate $X\leq C Y$ for some $C>0$. If $X\lesssim Y\lesssim X$, we will write $X\sim Y$. Dependence on various parameters will be indicated by subscripts, e.g. $X\lesssim_u Y$ indicates $X\leq CY$ for some $C=C(u)$. 

We will make use of the $L^2$ and $\dot H^1$ inner products given by $(f,g)_{L^2}=\Re\int f\bar g$ and $(f,g)_{\dot H^1}=(\nabla f,\nabla g)_{L^2}$. We will use $A^\perp$ to denote the orthogonal complement of a set $A$. 

For a spacetime slab $I\times\R^4$, we write $L_t^q L_x^r(I\times\R^4)$ for the Banach space of functions $u:I\times\R^4\to\C$ equipped with the norm
\[
\|u\|_{L_t^{q}L_x^r(I\times\R^4)}:=\bigg(\int_I \|u(t)\|_{L_x^r(\R^4)}\,dt\bigg)^{1/q},
\]
with the usual adjustments if $q$ or $r$ is infinity. When $q=r$, we abbreviate $L_t^qL_x^q=L_{t,x}^q$. We also abbreviate $\|f\|_{L_x^r(\R^4)}$ to $\|f\|_{L_x^r}.$ 

The Fourier transform on $\mathbb{R}^4$ is given by
\begin{equation*}
\aligned \widehat{f}(\xi)= \F f(\xi)= \tfrac{1}{4\pi^2}\int_{\mathbb{R}^4}e^{- ix\cdot \xi}f(x)\,dx.
\endaligned
\end{equation*}


\subsection{Some analysis tools} We denote the free Schr\"odinger propagator by $e^{it\Delta}$.  It is most naturally defined as the Fourier multiplier operator with symbol $e^{-it|\xi|^2}$, but also has a physical-space representation given by convolution with a complex Gaussian. This operator satisfies well-known estimates known as Strichartz estimates, which we state as follows.

First, we call a pair of exponents $(q,r)$ \emph{Schr\"odinger admissible} (in four dimensions) if $2\leq q,r\leq\infty$ and $\tfrac{2}{q}+\tfrac{4}{r}=2.$ For a spacetime slab $I\times\R^4$, we define the Strichartz norm 
\[
\|u\|_{S^0(I)}:=\sup\big\{\|u\|_{L_t^{q}L_x^r(I\times\R^4)}:(q,r)\text{ is Schr\"odinger admissible}\big\}.
\]
We denote $S^0(I)$ to be the closure of all test functions under this norm and write $N^0(I)$ for the dual of
$S^0(I)$. We then have the following:

\begin{proposition}[Strichartz estimates, \cite{GinibreVelo, KeelTao, Strichartz}]\label{prop1}
Suppose $u:I\times\R^4\to\C$ is a solution to $(i\partial_t+\Delta)u=F$. For any $t_0\in I$, 
\[
\|u\|_{S^0(I)}\lesssim\| u(t_0)\|_{L_x^2(\R^4)}+\|F\|_{N^0(I)}.
\]
\end{proposition}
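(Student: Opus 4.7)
The plan is to pass through Duhamel's formula and separately control the homogeneous and inhomogeneous contributions in every admissible Strichartz norm. Without loss of generality take $t_0 = 0$, so that
\[
u(t) = e^{it\Delta} u(0) - i\int_0^t e^{i(t-s)\Delta} F(s)\,ds,
\]
and it suffices to prove (a) the homogeneous estimate $\|e^{it\Delta} f\|_{L^q_t L^r_x(I\times\R^4)} \lesssim \|f\|_{L^2_x}$ for every admissible pair $(q,r)$, together with (b) the retarded inhomogeneous estimate with $F$ measured in the dual norm $\|F\|_{L^{\tilde q'}_t L^{\tilde r'}_x}$ for any admissible $(\tilde q, \tilde r)$. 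Taking the supremum over admissible $(q,r)$ on the left and the infimum over admissible $(\tilde q, \tilde r)$ on the right then produces the stated inequality in the $S^0$/$N^0$ formulation.

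For (a) I would apply the $TT^\ast$ method: boundedness of $T: f \mapsto e^{it\Delta} f$ from $L^2_x \to L^q_t L^r_x$ is equivalent to boundedness of
\[
TT^\ast : F \mapsto \int_\R e^{i(t-s)\Delta} F(s)\,ds
\]
from $L^{q'}_t L^{r'}_x$ to $L^q_t L^r_x$. Combining the four-dimensional Schr\"odinger dispersive estimate $\|e^{it\Delta}\|_{L^1_x \to L^\infty_x} \lesssim |t|^{-2}$ with the $L^2$ isometry and Riesz--Thorin interpolation yields $\|e^{i(t-s)\Delta}\|_{L^{r'}_x \to L^r_x} \lesssim |t-s|^{-2(1-2/r)}$. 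Substituting this into the time integral reduces matters to one-dimensional Hardy--Littlewood--Sobolev fractional integration, which succeeds exactly when $\tfrac{2}{q}+\tfrac{4}{r}=2$ and $q > 2$. The remaining endpoint pair $(q,r) = (2,4)$ lies on the boundary of HLS and must be treated separately via the Keel--Tao bilinear argument, which dyadically decomposes in $|t-s|$ and estimates each slab using the dispersive bound between two nearby admissible exponents together with an atomic decomposition of $F$ in the time variable.

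For (b) the untruncated variant, in which the time integral runs over all of $\R$, follows immediately from (a) by the $TT^\ast$ identity and duality. To restore the retarded cutoff $s < t$ one applies the Christ--Kiselev lemma, which is available whenever $q > \tilde q'$; the only surviving case is the double endpoint $q = \tilde q = 2$, where Christ--Kiselev fails and one must again invoke the Keel--Tao bilinear argument applied directly to the retarded kernel. The main obstacle throughout is precisely this endpoint analysis in four dimensions: both naive duality and HLS break down exactly at $(q,r) = (2,4)$, so the short interpolation route is insufficient and the full Keel--Tao machinery is needed to close the proof.
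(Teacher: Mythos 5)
The paper does not prove Proposition~\ref{prop1}; it is quoted as a standard result with citations to \cite{GinibreVelo, KeelTao, Strichartz}. Your outline is precisely the standard argument from those references --- Duhamel, the $TT^\ast$ reduction, the $|t|^{-2}$ dispersive estimate interpolated to $L^{r'}_x\to L^r_x$, Hardy--Littlewood--Sobolev for the non-endpoint pairs, the Keel--Tao bilinear argument at $(q,r)=(2,4)$, and Christ--Kiselev for the retarded term off the double endpoint --- and it is correct as stated. The only point you gloss over is that $\|F\|_{N^0(I)}$ is the dual norm of $S^0(I)$, realized as an infimum over decompositions $F=\sum_j F_j$ with each piece measured in a (possibly different) dual admissible norm, rather than an infimum over a single exponent pair; one recovers the stated bound by applying your estimate to each $F_j$ and summing via linearity of the Duhamel integral, which is routine.
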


We next give some basic properties for the ground state $W$.
\begin{lemma}[Sharp Sobolev embedding, \cite{Au76, Ta76}] \label{GNsharpz}
For any $f\in \dot H^1(\R^4)$, 
\begin{equation}\label{GNinequality}
\|f\|_{L^{4}_x(\mathbb{R}^4)} \leq C_4 \|f\|_{\dot{H}^{1}(\mathbb{R}^4)},
\end{equation}
where 
\[
C_{4}:=\|W\|_{L^{4}_x(\mathbb{R}^4)} \|W\|_{\dot{H}^{1}(\mathbb{R}^4)}^{-1}.
\]
Moreover, equality holds in \eqref{GNinequality} if and only if \(f(x)=z_0  W(\lambda_0 x+x_0)\) for some  $\lambda_0>0$, $z_0\in\C$ and $x_0\in\R^4$.
\end{lemma}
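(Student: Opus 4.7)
The plan is to follow the classical variational strategy of Aubin and Talenti: reduce to nonnegative, radially symmetric decreasing functions, produce an optimizer, show the optimizer solves the Euler--Lagrange equation, and then classify the positive solutions of that equation.

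First, I would reduce to real, nonnegative radial decreasing $f$. For complex $f$, the diamagnetic inequality $|\nabla |f|| \leq |\nabla f|$ (almost everywhere) preserves $\|f\|_{L^4}$ while only decreasing $\|\nabla f\|_{L^2}$, so any equality case for complex data must in fact satisfy $|\nabla |f||=|\nabla f|$ a.e., which forces $f = e^{i\theta} |f|$ for a constant phase $\theta \in \R$. Replacing $f$ by $|f|$ I may then assume $f\ge 0$. Applying Schwarz symmetrization and the Polya--Szego inequality, $\|\nabla f^*\|_{L^2}\leq \|\nabla f\|_{L^2}$ with $\|f^*\|_{L^4}=\|f\|_{L^4}$, so it suffices to consider nonnegative radial decreasing optimizers, and equality in Polya--Szego forces $f$ to be a translate of a radial decreasing function.

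Next, I would establish existence of an optimizer. The inequality is invariant under the noncompact group of translations and $\dot H^1$-scalings $f\mapsto \lambda f(\lambda \cdot + x_0)$, so direct minimization of $\|\nabla f\|_{L^2}^2$ under $\|f\|_{L^4}=1$ could fail by concentration or escape to infinity. I would invoke the concentration-compactness lemma on the normalized minimizing sequence (after rescaling so that $\|f_n\|_{L^4}=1$ and mass-concentrating at scale $1$ at the origin), ruling out vanishing via the Sobolev inequality itself and dichotomy via the strict subadditivity of the best constant; compactness plus lower semicontinuity of the $\dot H^1$ norm yields an optimizer $U\ge 0$. After rescaling we may arrange $\|U\|_{\dot H^1}=1$, and the Euler--Lagrange equation reads
\[
-\Delta U = \mu U^3 \qtq{in} \R^4,
\]
for some $\mu>0$; absorbing $\mu$ into a scaling leaves the normalized equation $\Delta U + U^3 = 0$.

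The main obstacle is then the classification step: any positive $\dot H^1$ solution of $\Delta U + U^3 = 0$ on $\R^4$ must be of the form $U(x)=\lambda W(\lambda(x-x_0))$ for some $\lambda>0$, $x_0\in\R^4$. For this I would apply the moving-plane / Kelvin-transform argument of Caffarelli--Gidas--Spruck (or the simpler Gidas--Ni--Nirenberg version once decay at infinity is established from the fact that $U\in\dot H^1\cap L^4$), which yields that $U$ is radially symmetric and decreasing about some point $x_0$; the radial ODE $(r^3 U')' + r^3 U^3 = 0$ with the boundary conditions $U'(0)=0$ and $U\to 0$ at infinity then has a one-parameter family of solutions given precisely by rescalings of $W$. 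Combining all the reductions, any optimizer equals $z_0\, W(\lambda_0 x + x_0)$, which simultaneously identifies the sharp constant as $C_4 = \|W\|_{L^4}/\|W\|_{\dot H^1}$ and characterizes the equality cases, completing the proof.
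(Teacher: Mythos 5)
The paper does not prove this lemma; it simply quotes the sharp Sobolev inequality from the classical references of Aubin and Talenti. Your outline is a correct modern proof, but it is worth noting that it differs from the route taken in those cited works: Talenti (and Aubin) reduce to radial decreasing functions by symmetrization exactly as you do, but then solve the resulting one-dimensional variational problem \emph{explicitly} (via a change of variables and Bliss's inequality), producing $W$ and the constant $C_4$ in closed form without any compactness argument or PDE classification. Your route instead invokes concentration--compactness for existence and Caffarelli--Gidas--Spruck for uniqueness; this is heavier machinery but generalizes better (e.g.\ to settings where the optimizer is not explicit). Two small points to tighten: (1) equality in the P\'olya--Szeg\H{o} inequality does not by itself force $f$ to be a translate of a radial decreasing function without the Brothers--Ziemer hypothesis on the critical set, but you do not actually need this step --- it suffices that $|f|$ is an optimizer, hence satisfies the Euler--Lagrange equation, is strictly positive by the strong maximum principle, and is then classified by the moving-plane argument; (2) the conclusion that the phase of $f$ is a single constant requires knowing that $\{f\neq 0\}$ is connected, which again follows only \emph{after} you have identified $|f|$ with a positive rescaled $W$, so the order of the argument matters. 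With those reorderings the proof is complete and correct.
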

The ground state $W$ also satisfies the following Pohozaev identities:
\begin{equation*}
\|W\|_{\dot{H}^1(\R^4)}^2=\|W\|^{4}_{L^4(\R^4)} \quad \mbox{and }\ \quad E(W)=\tfrac{1}{4}\|W\|_{\dot{H}^1(\R^4)}^2.
\end{equation*}

We will need the following two elementary lemmas related to energy trapping.  The first is from \cite{duyckaerts1}.
\begin{lemma}\label{varlemma}
Let $f\in \dot{H}^1(\mathbb{R}^4)$ such that $\|f\|_{\dot{H}^1}\leq \|W\|_{\dot{H}^1}$. Then
\begin{equation*}
\frac{\|f\|_{\dot{H}^1}^2}{\|W\|_{\dot{H}^1}^2}\leq \frac{E(f)}{E(W)}  
\end{equation*} 
and there exists constant $C>0$ such that
\begin{equation*}
\tfrac{1}{C}\|f\|_{\dot{H}^1(\R^4)}^2\leq E(f)\leq C\|f\|_{\dot{H}^1(\R^4)}^2.
\end{equation*}
In particular, $E(f)$ is positive.
\end{lemma}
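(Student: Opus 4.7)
The plan is to reduce everything to the sharp Sobolev embedding together with the Pohozaev identity $\|W\|_{\dot H^1}^2 = \|W\|_{L^4}^4$ and $E(W)=\tfrac14\|W\|_{\dot H^1}^2$. From Lemma~\ref{GNsharpz}, combined with these identities, one immediately gets the quantitative form
\[
\|f\|_{L^4}^4 \leq \tfrac{\|f\|_{\dot H^1}^4}{\|W\|_{\dot H^1}^2},
\]
which is the only analytic ingredient needed. Everything else is a one-variable calculus argument in the ratio $t := \|f\|_{\dot H^1}^2/\|W\|_{\dot H^1}^2 \in [0,1]$.

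First I would insert this Sobolev bound into the energy to get
\[
E(f) \geq \tfrac12 \|f\|_{\dot H^1}^2 - \tfrac14 \tfrac{\|f\|_{\dot H^1}^4}{\|W\|_{\dot H^1}^2} = \tfrac14 \|W\|_{\dot H^1}^2\, t(2-t) = E(W)\, t(2-t).
\]
Since $t\leq 1$, we have $2-t \geq 1$, and hence $t(2-t)\geq t$, which yields the first claimed inequality
\[
\tfrac{\|f\|_{\dot H^1}^2}{\|W\|_{\dot H^1}^2} = t \leq t(2-t) \leq \tfrac{E(f)}{E(W)}.
\]

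For the two-sided comparison, the same computation already gives $E(f)\geq \tfrac14 \|f\|_{\dot H^1}^2$ after substituting $E(W)=\tfrac14 \|W\|_{\dot H^1}^2$. The upper bound $E(f)\leq \tfrac12\|f\|_{\dot H^1}^2$ is immediate from the definition of $E$ together with the positivity of the $L^4$ term. Taking $C=4$ (say) therefore covers both directions and also gives $E(f)>0$ whenever $f\not\equiv 0$.

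There is no real obstacle here; the only thing to watch is that the constant in Lemma~\ref{GNsharpz} is tuned to $W$ so that the Pohozaev identities convert the Sobolev inequality into a clean estimate in terms of $\|W\|_{\dot H^1}$ alone, and that the hypothesis $\|f\|_{\dot H^1}\leq \|W\|_{\dot H^1}$ is used precisely to guarantee $2-t\geq 1$ so that energy trapping is not lost at the threshold.
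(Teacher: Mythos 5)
Your proof is correct and is exactly the standard energy-trapping argument (the paper cites \cite{duyckaerts1} for this lemma rather than proving it, and your route via sharp Sobolev plus the Pohozaev identities, reducing to the elementary inequality $t\leq t(2-t)$ for $t\in[0,1]$, is the same one used there). The constant $C=4$ and the remark that positivity requires $f\not\equiv 0$ are both accurate.
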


Using this lemma, we may readily obtain the following: 

\begin{lemma}\label{GlobalS}
Suppose that $u_{0}\in \dot{H}^{1}(\R^4)$ satisfies
\begin{equation}\label{f2.5}
E(u_0)= E(W)\qtq{and} \|u_0\|_{\dot{H}^1}<\|W\|_{\dot{H}^1}.
\end{equation}
Then the solution to \eqref{1.1} with $u|_{t=0}=u_0$ satisfies 
\begin{equation}\label{f2.6}
\|u(t)\|_{\dot{H}_x^1}<\|W\|_{\dot{H}^1}
\end{equation}
throughout its maximal lifespan. 
\end{lemma}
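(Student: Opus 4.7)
The plan is a standard continuity argument. Since $u\in C(I;\dot H^1)$, the map $t\mapsto\|u(t)\|_{\dot H^1}$ is continuous on $I$, so if \eqref{f2.6} failed somewhere in $I$, the intermediate value theorem together with the strict inequality $\|u_0\|_{\dot H^1}<\|W\|_{\dot H^1}$ would produce a first time $t^*\in I$ (say $t^*>0$) at which $\|u(t^*)\|_{\dot H^1}=\|W\|_{\dot H^1}$. The task is to derive a contradiction from the existence of $t^*$.

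At this time, conservation of energy $E(u(t^*))=E(u_0)=E(W)$ combined with the Pohozaev identities $E(W)=\tfrac14\|W\|_{\dot H^1}^2$ and $\|W\|_{L^4}^4=\|W\|_{\dot H^1}^2$ yields
\[
\|u(t^*)\|_{L^4}^4 = 2\|u(t^*)\|_{\dot H^1}^2 - 4E(W) = \|W\|_{\dot H^1}^2 = \|W\|_{L^4}^4.
\]
Combined with $\|u(t^*)\|_{\dot H^1}=\|W\|_{\dot H^1}$, this shows that $u(t^*)$ saturates the sharp Sobolev inequality \eqref{GNinequality}, so by Lemma~\ref{GNsharpz},
\[
u(t^*,x)=e^{i\theta_0}\lambda_0\, W(\lambda_0 x+x_0)
\]
for some $\theta_0\in\R$, $\lambda_0>0$, and $x_0\in\R^4$, with the prefactor $\lambda_0$ being forced by matching the $\dot H^1$ norms on both sides.

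To close the argument I would observe that $v(t,x):=e^{i\theta_0}\lambda_0\, W(\lambda_0 x+x_0)$ is a stationary solution of \eqref{1.1} (up to the constant phase $e^{i\theta_0}$) agreeing with $u$ at $t^*$. The local $\dot H^1$ Cauchy theory for the energy-critical NLS, which provides uniqueness in the class $C(\cdot;\dot H^1)\cap L^6_{t,x,\mathrm{loc}}$ irrespective of the size of the data, then forces $u\equiv v$ on a neighborhood of $t^*$, and a standard iteration extends this to all of $I$. In particular $\|u_0\|_{\dot H^1}=\|v(0)\|_{\dot H^1}=\|W\|_{\dot H^1}$, contradicting the hypothesis. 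The only delicate point is the invocation of uniqueness at the threshold datum $u(t^*)$, but this is handled without difficulty by the standard local theory, using that $W\in\dot H^1\cap L^6_x(\R^4)$ so that $v\in L^6_{t,x,\mathrm{loc}}$.
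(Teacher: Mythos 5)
Your proof is correct and is essentially the argument the paper leaves implicit when it says the lemma follows ``readily'': a continuity/first-time argument, rigidity at a hypothetical time of norm equality via energy conservation, the Pohozaev identities, and the equality case of the sharp Sobolev inequality (i.e.\ the variational characterization of $W$), followed by uniqueness for the Cauchy problem to contradict $\|u_0\|_{\dot H^1}<\|W\|_{\dot H^1}$. All steps, including the normalization $|z_0|=\lambda_0$ and the applicability of uniqueness at the static solution, check out.
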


\subsection{Local theory} We turn to local well-posedness and stability for \eqref{1.1}. An important role is played by the $L_{t,x}^6$ spacetime norm of solutions, which we refer to as the \emph{scattering norm} and denote by
\[
S_I(u) = \|u\|_{L_{t,x}^6(I\times\R^4)}^6.
\]  
In particular, solutions may be extended in time as long as this norm remains finite, and a global solution with finite $L_{t,x}^6$-norm necessarily scatters in $\dot H^1$ in both directions. In light of this, we define \emph{blowup} to mean infinite $L_{t,x}^6$-norm. We remark that this is different from the notion of finite-time blowup mentioned in the introduction (in particular, the present notion of blowup may occur in finite or infinite time).

\begin{definition}[Blowup]\label{s2.7def}
Let $u:I\times\R^4\to\C$ be a maximal-lifespan solution to \eqref{1.1}. We say $u$ \emph{blows up forward in time} if there exists a time $t_1\in I$ such that $S_{[t_1, \sup I)}(u)=\infty$, and that $u$ \emph{blows up backward in time} if there exists a time $t_1 \in I$ such that $S_{(\inf I, t_1]}(u) = \infty.$
\end{definition}

We then have the following local well-posedness result for \eqref{1.1}.

\begin{theorem}[Local well-posedness, \cite{KVnote, VisanOberwolfach}]\label{T:local}
Let $u_0\in\dot H_x^1(\R^4)$ and $t_0\in \R$. There exists a unique maximal-lifespan solution $u:I\times\R^4 \to \C$ to
\eqref{1.1} with initial data $u|_{t=t_0}=u_0$.  Furthermore, we have the following:
\begin{enumerate}
\item [$(i)$] $I$ is an open neighborhood of $t_0$.
\item [$(ii)$] If $\sup(I)$ is finite, then $u$ blows up forward in time; if $\inf(I)$ is finite,
then $u$ blows up backward in time.
\item[$(iii)$] If $\sup(I)=\infty$ and $u$ does not blow up forward in time, then $u$ scatters forward in time.  The analogous statement holds backward in time.
\item[$(iv)$] If $\|\nabla u_0\|_{L^2}$ is sufficiently small, then $u$ is global and scatters in both time directions, with $S_\R(u)\lesssim \|\nabla u_0\|_{L^2}^{6}$.
\end{enumerate}
\end{theorem}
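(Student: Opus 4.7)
The plan is to run a standard Banach fixed-point argument on the Duhamel formula
\[
u(t) = e^{i(t-t_0)\Delta}u_0 + i\int_{t_0}^t e^{i(t-s)\Delta}\bigl(|u|^2 u\bigr)(s)\,ds
\]
in a function space that tracks both the $\dot H^1$-level Strichartz norm $\|\nabla u\|_{S^0(I)}$ and the scattering norm $\|u\|_{L_{t,x}^6(I\times\R^4)}$. These two are linked because $(6,12/5)$ is Schr\"odinger-admissible in four dimensions and Sobolev embedding $\dot W^{1,12/5}(\R^4)\hookrightarrow L^6(\R^4)$ gives $\|u\|_{L_{t,x}^6}\lesssim\|\nabla u\|_{L_t^6 L_x^{12/5}}\lesssim \|\nabla u\|_{S^0}$. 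The decisive nonlinear estimate, obtained by H\"older with the chain rule, is
\[
\bigl\|\nabla(|u|^2 u)\bigr\|_{L_t^2 L_x^{4/3}(I\times\R^4)} \lesssim \|u\|_{L_{t,x}^6(I\times\R^4)}^2\, \|\nabla u\|_{L_t^6 L_x^{12/5}(I\times\R^4)},
\]
together with the analogous difference estimate for $|u|^2 u-|v|^2 v$. Since $(2,4/3)$ is the dual of an admissible pair, these bounds combine with Proposition~\ref{prop1} to make the Duhamel map a contraction on a small ball adapted to these norms.

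For part $(iv)$, Strichartz applied to the linear flow gives $\|\nabla e^{i(t-t_0)\Delta}u_0\|_{S^0(\R)}+\|e^{i(t-t_0)\Delta}u_0\|_{L_{t,x}^6(\R\times\R^4)}\lesssim \|\nabla u_0\|_{L^2}$, so when $\|\nabla u_0\|_{L^2}$ is sufficiently small the contraction closes on $I=\R$; this yields a global solution with $\|u\|_{L_{t,x}^6}\lesssim \|\nabla u_0\|_{L^2}$, hence $S_\R(u)\lesssim \|\nabla u_0\|_{L^2}^6$. For general data, $\|\nabla u_0\|_{L^2}$ cannot be made small, but by density of $C_c^\infty$ in $\dot H^1$ combined with dominated convergence one can shrink $I\ni t_0$ to force $\|e^{i(t-t_0)\Delta}u_0\|_{L_{t,x}^6(I\times\R^4)}$ below the smallness threshold. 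The contraction then produces a unique solution on such an $I$; gluing maximal such intervals yields the maximal open lifespan in $(i)$.

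To obtain the dichotomies $(ii)$ and $(iii)$ I would argue by contradiction using the same nonlinear estimate. If $\sup I<\infty$ but $S_{[t_1,\sup I)}(u)<\infty$ for some $t_1\in I$, then the nonlinear bound implies $\|\nabla u\|_{S^0([t_1,\sup I))}<\infty$, so Duhamel together with Strichartz shows that $u(t)$ converges in $\dot H^1$ as $t\uparrow\sup I$; applying the local theory at this limit point extends $u$ past $\sup I$, contradicting maximality. This gives $(ii)$. For $(iii)$, if $\sup I=\infty$ and $S_{[t_0,\infty)}(u)<\infty$, the same Strichartz-plus-nonlinear-estimate bound shows that $\{e^{-it\Delta}u(t)\}_{t\to\infty}$ is Cauchy in $\dot H^1$, producing a scattering state $u_+$; the backward statement is symmetric.

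The main obstacle — truly the only nontrivial step — is the nonlinear estimate in the correct function space. In dimension four this is painless because the cubic nonlinearity $|u|^2 u$ is $C^2$, so the ordinary chain rule applies and no fractional-derivative machinery is needed. (This is exactly where the high-dimensional analogue becomes more delicate, as noted in the paper's discussion of \cite{Visan2, KV1}.) Everything else is a routine consequence of Strichartz and a bootstrap: uniqueness from the contraction, openness of $I$ from the iteration scheme, and the blowup/scattering alternatives from the finite-scattering-norm analysis outlined above.
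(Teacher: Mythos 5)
Your argument is correct and is precisely the standard contraction-mapping proof in Strichartz spaces that the paper imports from \cite{KVnote, VisanOberwolfach} without reproving: the admissibility of $(6,12/5)$ and $(2,4)$, the Sobolev embedding $\dot W^{1,12/5}(\R^4)\hookrightarrow L^6(\R^4)$, the nonlinear estimate $\|\nabla(|u|^2u)\|_{L_t^2L_x^{4/3}}\lesssim \|u\|_{L_{t,x}^6}^2\|\nabla u\|_{L_t^6L_x^{12/5}}$, and the finite-scattering-norm bootstrap for the blowup/scattering dichotomies are all exactly as in those references. No gaps.
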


In addition, we will need the following stability result for \eqref{1.1}.

\begin{lemma}[Stability, \cite{KVnote, VisanOberwolfach}]\label{pertu}  For any $E,L > 0$ and $\eps > 0$ there exists $\delta > 0$
such that the following holds: Suppose $\tilde u: I \times \R^4 \to \C$ satisfies
\begin{equation*}
\bigl\| \nabla \bigl[(i\partial_t+\Delta)\tilde u +|\tilde u|^2 \tilde u \bigr] \bigr\|_{L_{t,x}^{\frac{3}{2}}(I\times\R^4)} \leq \delta
\end{equation*}
as well as
\begin{align*}
\|\tilde u\|_{L_t^\infty \dot H^1_x(I\times\R^4)}\leq E \quad \text{and} \quad S_I(\tilde u)\leq L.
\end{align*}
If $t_0 \in I$ and $u_0 \in \dot H^1_x(\R^4)$ are such that
\[
\|\tilde u(t_0)-u_0\|_{\dot H^1_x(\R^4)} \leq \delta,
\]
then there exists a solution $u: I \times \R^4 \to \C$ to \eqref{1.1} with $u|_{t=t_0}= u_0$ such that
\begin{align*}
\|\tilde u -u\|_{L_t^\infty \dot H^1_x(I\times\R^4)} + S_I(\tilde u - u) \leq \eps.
\end{align*}
\end{lemma}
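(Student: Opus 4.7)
\medskip

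\noindent\textbf{Proof proposal for Lemma~\ref{pertu}.} The strategy is the standard divide-and-iterate scheme for energy-critical stability, adapted to four dimensions. Let $w := u - \tilde u$, which formally satisfies
\[
(i\partial_t + \Delta) w = -\bigl(|\tilde u + w|^2(\tilde u + w) - |\tilde u|^2 \tilde u\bigr) + e, \qquad e := (i\partial_t+\Delta)\tilde u + |\tilde u|^2\tilde u,
\]
with $\|\nabla e\|_{L_{t,x}^{3/2}(I\times\R^4)}\le\delta$ and $\|w(t_0)\|_{\dot H^1}\le\delta$. The plan is: first set up the key nonlinear estimate at the $\dot H^1$ level, then partition $I$ into pieces on which $\tilde u$ has small scattering norm and close a short-time bootstrap on each piece, and finally iterate across the partition.

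For the nonlinear estimate, note that $\nabla(|u|^2u - |\tilde u|^2\tilde u)$ is a polynomial expression that is schematically a sum of terms of the form $u^\#\,\tilde u^\#\,\nabla w^\#$ or $u^\#\,w^\#\,\nabla\tilde u^\#$ (where $\#$ denotes either no conjugation, complex conjugation, or vanishing factor). Using H\"older with the triple $L_{t,x}^6\cdot L_{t,x}^6\cdot L_t^6 L_x^{12/5}\hookrightarrow L_t^2 L_x^{4/3}$, and noting that $(2,4)$ (and hence its dual $(2,4/3)$) together with $(6,12/5)$ are Schr\"odinger admissible in $d=4$, one obtains
\begin{align*}
\bigl\|\nabla\bigl(|u|^2u-|\tilde u|^2\tilde u\bigr)\bigr\|_{L_t^2 L_x^{4/3}}
&\lesssim \bigl(\|\tilde u\|_{L_{t,x}^6}^2+\|w\|_{L_{t,x}^6}^2\bigr)\|\nabla w\|_{L_t^6 L_x^{12/5}}\\
&\quad + \bigl(\|\tilde u\|_{L_{t,x}^6}+\|w\|_{L_{t,x}^6}\bigr)\|w\|_{L_{t,x}^6}\|\nabla\tilde u\|_{L_t^6 L_x^{12/5}}.
\end{align*}
The Sobolev embedding $\dot W_x^{1,12/5}(\R^4)\hookrightarrow L_x^6(\R^4)$ gives $\|w\|_{L_{t,x}^6}\lesssim\|\nabla w\|_{L_t^6 L_x^{12/5}}\le\|\nabla w\|_{S^0}$, and similarly for $\tilde u$; this allows the $L_{t,x}^6$ norms of $w$ to be controlled by its $\nabla$--Strichartz norm.

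Next, partition $I = \bigcup_{j=1}^J I_j$ (with $J = J(L,\eta)$) into consecutive subintervals so that $\|\tilde u\|_{L_{t,x}^6(I_j\times\R^4)}\le\eta$ on each piece, where $\eta = \eta(E)$ is a small absolute constant to be chosen. Fix $j$ and let $t_{j-1}$ be the left endpoint of $I_j$; write $\eps_j := \|w(t_{j-1})\|_{\dot H^1}$. Applying the gradient Strichartz inequality of Proposition~\ref{prop1} to $w$ on $I_j$, combined with the nonlinear estimate above and the error bound on $e$, yields
\[
\|\nabla w\|_{S^0(I_j)} \lesssim \eps_j + \delta + \eta^2\|\nabla w\|_{S^0(I_j)} + \eta\|\nabla w\|_{S^0(I_j)}^2 + \|\nabla w\|_{S^0(I_j)}^3.
\]
Choosing $\eta$ sufficiently small (depending only on the absolute Strichartz constant) and assuming inductively that $\eps_j + \delta$ is small enough, a standard continuity/bootstrap argument upgrades this to $\|\nabla w\|_{S^0(I_j)}\le C(\eps_j+\delta)$.

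Finally, one iterates across the $J$ subintervals. The initial data error on $I_{j+1}$ satisfies $\eps_{j+1}\le C(\eps_j+\delta)$, so after $J$ steps $\eps_{J+1}\le C^J(\eps_1 + J\delta)\le C^J(J+1)\delta$. Choosing $\delta = \delta(E,L,\eps)$ small enough that $C^J(J+1)\delta\le\eps$ (which is possible since $J$ depends only on $L$ and $E$), we conclude $\|\nabla w\|_{S^0(I)}\le\eps$, and in particular $S_I(w)\le\eps$ after possibly shrinking $\delta$ further. Strichartz then upgrades the bound on $w$ to the claimed $L_t^\infty\dot H_x^1$ control. The existence of the genuine solution $u$ on all of $I$ follows from the usual blow-up alternative of Theorem~\ref{T:local} combined with the uniform $\dot H^1$ bounds produced by the iteration.

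The main technical point to watch is the calibration of the smallness parameters: $\eta$ must be fixed first (in terms of absolute constants), then $\delta$ must be chosen \emph{after} the partition has been constructed so that the geometric growth $C^J$ is absorbed. The nonlinear estimate itself is straightforward because the cubic nonlinearity in $d=4$ is algebraic; no fractional chain rule or delicate H\"older refinement is required (unlike in the higher-dimensional setting alluded to in Appendix~\ref{S:highd}).
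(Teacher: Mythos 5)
Your proposal is correct and follows essentially the standard stability argument for the $4d$ energy-critical NLS that the paper cites (without proof) from \cite{KVnote, VisanOberwolfach}: derive the equation for $w=u-\tilde u$, estimate the nonlinear difference via H\"older in the admissible scaffold $L_{t,x}^6\cdot L_{t,x}^6\cdot L_t^6 L_x^{12/5}\to L_t^2 L_x^{4/3}$ (which works cleanly since the cubic nonlinearity is algebraic in $d=4$), partition $I$ so that $\tilde u$ has small $L_{t,x}^6$-norm on each piece, close a bootstrap on each piece, and iterate with the geometric bookkeeping $\eps_{j+1}\le C(\eps_j+\delta)$. The only cosmetic imprecision is that the displayed bootstrap inequality suppresses the $E$-dependent coefficient (coming from $\|\nabla\tilde u\|_{S^0(I_j)}\lesssim E$) in the $\|\nabla w\|_{S^0}^2$ and linear terms, but you correctly flag that $\eta=\eta(E)$, so the calibration of parameters is sound.
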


\subsection{Concentration-compactness and almost periodicity} We next record some concentration compactness tools and introduce the notion of an almost periodic solution.

We begin by introducing the symmetry group adapted to $\dot H^1(\R^4)$:
\begin{definition}[Symmetry group] For any $\lambda_0>0$ and $x_0\in\R^4$, we define a unitary transformation $g_{\lambda_0, x_0}$: $\dot{H}^1(\R^4)\to \dot{H}^1(\R^4)$ by
\begin{align*}
\left(g_{\lambda_0, x_0} f \right) (x) = \tfrac{1}{\lambda_{0}}  f 
\bigl( \tfrac{1}{\lambda_{0}}( x - x_0) \bigr).
\end{align*}
We let $G$ denote the group of all such transformations under composition.
\end{definition}

\begin{lemma} [Linear profile decomposition, \cite{keraani1,KVnote}]\label{L:cc}
Let $\{u_n\}_{n\geq 1}$ be a sequence of functions bounded in $\dot H_x^1(\R^4)$. After passing to a subsequence if necessary, there exist a sequence of functions $\{\phi^j\}_{j\geq 1}\subset \dot H_x^1(\R^4)$, group elements $g_n^j \in G$, and times $t_n^j\in \R$ such that we have the following decomposition
\begin{align}\label{profile1}
u_n = \sum_{j=1}^J g_n^j e^{it_n^j\Delta}\phi^j + w_n^{J}
\end{align}
for all $J\geq 1$. Here $w_n^{J} \in \dot H^1_x(\R^4)$ obey
\begin{equation}\label{w scat}
\lim_{J\to \infty}\limsup_{n\to\infty} \bigl\| e^{it\Delta}w_n^{J}\bigr\|_{L_{t,x}^{6}(\R\times\R^4)}=0.
\end{equation}
Moreover, for any $j \neq j'$,
\begin{align}\label{profile3}
\tfrac{\lambda_{n}^j}{\lambda_{n}^{j'}} + \tfrac{\lambda_{n}^{j'}}{\lambda_n^{j}} + \tfrac{|x_{n}^j-x_n^{j'}|^2}{\lambda_{n}^j \lambda_n^{j'}} + \tfrac{|t_{n}^j(\lambda_n^j)^2- t_{n}^{j'}(\lambda_{n}^{j'})^2|}{\lambda_{n}^j\lambda_{n}^{j'}}\to\infty
    \quad \mbox{ as }\quad n\to \infty.
\end{align}
Furthermore, for any $J \geq 1$ we have the kinetic energy decoupling
\begin{equation}\label{profile4}
\lim\limits_{n \to \infty} \biggl[ \|\nabla u_n\|^2_{L^2(\R^4)} - \sum_{j=1}^J \|\nabla\phi^j\|^2_{L^2(\R^4)} - \|\nabla w_n^J\|^2_{L^2(\R^4)} \biggr] = 0
\end{equation}
and $L^4$ decoupling
\begin{equation}\label{profile5}
\lim\limits_{n\to\infty} \biggl[ \|u_n\|_{L^4(\R^4)}^4 - \sum_{j=1}^J \| \phi^j\|_{L^4(\R^4)}^4 - \|w_n\|_{L^4(\R^4)}^4 \biggr] = 0.
\end{equation}
\end{lemma}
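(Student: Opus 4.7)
The plan is to follow the by-now-standard bubble-decomposition strategy of Keraani, which rests on an inverse Strichartz inequality adapted to the critical Sobolev embedding $\dot H^1(\R^4)\hookrightarrow L^6_{t,x}(\R\times\R^4)$ (via $e^{it\Delta}$). The first step, and the analytic heart of the argument, is the \emph{inverse Strichartz inequality}: if $\{v_n\}\subset\dot H^1$ is bounded and $\|e^{it\Delta}v_n\|_{L^6_{t,x}}\ge\varepsilon$, then, after passing to a subsequence, there exist $(\lambda_n,x_n,t_n)\in(0,\infty)\times\R^4\times\R$ and a nontrivial $\phi\in\dot H^1$ such that
\[
(g_{\lambda_n,x_n})^{-1}e^{-it_n\Delta}v_n\rightharpoonup \phi\qtq{weakly in}\dot H^1,
\]
and the extracted piece $g_{\lambda_n,x_n}e^{it_n\Delta}\phi$ absorbs at least $c\varepsilon^{\alpha}$ of the kinetic energy for some $c,\alpha>0$ independent of $n$. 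To prove this, I would combine Littlewood--Paley theory with the dispersive decay for $e^{it\Delta}$ on $\R^4$: interpolating the Strichartz bound $\|P_Nf\|_{L^6_{t,x}}\lesssim \|P_Nf\|_{\dot H^1}$ against the trivial bound $\|e^{it\Delta}P_Nf\|_{L^\infty_{t,x}}\lesssim N\|P_Nf\|_{\dot H^1}^{1/3}\|e^{it\Delta}P_Nf\|_{L^\infty_{t,x}}^{2/3}$ yields a refined Sobolev embedding of the form
\[
\|e^{it\Delta}f\|_{L^6_{t,x}}\lesssim \|f\|_{\dot H^1}^{\theta}\,\sup_{N\in 2^\Z}\|e^{it\Delta}P_Nf\|_{L^\infty_{t,x}}^{1-\theta}
\]
for some $\theta\in(0,1)$. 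The lower bound on $\|e^{it\Delta}v_n\|_{L^6_{t,x}}$ therefore forces a non-trivial concentration at some frequency $N_n\sim \lambda_n^{-1}$, some spatial point $x_n$ and some time $t_n$; the Rellich--Kondrashov theorem then gives the claimed weak limit $\phi\neq 0$ and a quantitative lower bound on $\|\phi\|_{\dot H^1}$.

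The second step is to apply the inverse Strichartz inequality iteratively. Set $w_n^0:=u_n$. If $\limsup_n\|e^{it\Delta}w_n^{J-1}\|_{L^6_{t,x}}>0$, apply the inverse Strichartz inequality to $w_n^{J-1}$ to extract a profile $\phi^J$ with parameters $g_n^J,t_n^J$, and define
\[
w_n^J:=w_n^{J-1}-g_n^J e^{it_n^J\Delta}\phi^J.
\]
By construction, the defect measures involved in the concentration guarantee kinetic-energy decoupling at every stage,
\[
\|\nabla w_n^{J-1}\|_{L^2}^2=\|\nabla\phi^J\|_{L^2}^2+\|\nabla w_n^J\|_{L^2}^2+o_n(1),
\]
which telescopes to \eqref{profile4}. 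Because the $\|\nabla\phi^J\|_{L^2}^2$ are summable and each is bounded below by a power of $\limsup_n\|e^{it\Delta}w_n^{J-1}\|_{L^6_{t,x}}$, we must have $\limsup_n\|e^{it\Delta}w_n^J\|_{L^6_{t,x}}\to 0$ as $J\to\infty$, i.e.\ \eqref{w scat}. A standard diagonal argument then extracts a single subsequence of $\{u_n\}$ for which the decomposition \eqref{profile1} holds for all $J$.

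The third step is the orthogonality \eqref{profile3}. At each extraction stage, the new parameters $(\lambda_n^J,x_n^J,t_n^J)$ are chosen to maximize (up to a factor of $2$) the concentration functional for $w_n^{J-1}$; this choice automatically forces divergence of the pseudometric in \eqref{profile3} with respect to all previously extracted parameters, as otherwise the new profile could be absorbed into a previous one (contradicting that $\phi^J$ is a weak limit of $w_n^{J-1}$ modulated by the new parameters, together with $w_n^{J-1}\rightharpoonup 0$ under any of the previous modulations). Finally, the $L^4$ decoupling \eqref{profile5} follows from \eqref{w scat}, \eqref{profile3}, and a Brezis--Lieb-type argument: by Sobolev, $\limsup_n\|w_n^J\|_{L^4}\to 0$ as $J\to\infty$ once one upgrades \eqref{w scat} using the interpolation $\|w_n^J\|_{L^4_x}\lesssim \|w_n^J\|_{L^\infty_t L^4_x}^{1/3}\|e^{it\Delta}w_n^J\|_{L^6_{t,x}}^{\cdot}$ in a suitable sense; the pairwise orthogonality of the profiles under \eqref{profile3} kills all cross terms.

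The step I expect to be the main obstacle is the inverse Strichartz inequality in $d=4$ with the $L^6_{t,x}$ norm, since one must set up the Littlewood--Paley/dispersive interpolation carefully and then invoke Rellich to pass to a non-trivial weak limit with quantitative control; everything that follows is a routine, if notationally heavy, iteration of that single analytic input. Because the statement of Lemma \ref{L:cc} is classical (see \cite{keraani1,KVnote}), my proposal would be to adopt it essentially verbatim from those references, with the only $d=4$-specific adjustment being in the exponents of the refined Sobolev embedding and the induction on energy.
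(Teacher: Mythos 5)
The paper gives no proof of this lemma---it is imported directly from \cite{keraani1,KVnote}---and your outline reproduces the standard argument of those references (refined Strichartz/Sobolev embedding via Littlewood--Paley and dispersive decay, inverse Strichartz inequality, iterative profile extraction with kinetic-energy decoupling, asymptotic orthogonality of the parameters, and a Brezis--Lieb-type argument for the $L^4$ decoupling), so it is essentially the same approach. The only blemish is the garbled display for the frequency-localized bound, in which $\|e^{it\Delta}P_Nf\|_{L^\infty_{t,x}}$ appears on both sides; the intended input is the Bernstein/dispersive bound $\|e^{it\Delta}P_Nf\|_{L^\infty_{t,x}}\lesssim N\|P_Nf\|_{\dot H^1}$ interpolated against an admissible Strichartz norm, which is harmless since you defer the precise exponents to the cited references anyway.
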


The main application of linear profile decompositions in our setting is to prove the existence of solutions to \eqref{1.1} obeying a certain compactness property known as almost periodicity. 

\begin{definition}[Almost periodic] A solution $u:I\times\R^4\to\C$ to \eqref{1.1} is \emph{almost periodic (modulo symmetries)} if  there exist functions $N:I\to\R^+$, $x:I\to\R^4$, and $C:\R^+\to\R^+$ such that for $t\in I$ and $\eta>0$,
\[
\int_{\left|x-x(t)\right|\geq\frac{C(\eta)}{N(t)}}|\nabla u(t,x)|^2\, dx+\int_{|\xi|\geq C(\eta)N(t)}\left|\xi\right|^2|\hat{u}(t,\xi)|^2\, dx\leq\eta.
\] 
We call $N(t)$ the \emph{frequency scale}, $x(t)$ the \emph{spatial center}, and $C(\eta)$ the \emph{compactness modulus}. 
\end{definition}

Using the embedding $\dot H^1\subset L^4$, we may also choose $C(\eta)$ so that
\begin{equation*}
\int_{\left|x-x(t)\right|\geq\frac{C(\eta)}{N(t)}}|u(t,x)|^4\, dx\leq \eta.
\end{equation*}

\section{Global well-posedness}

In this section, we establish the global existence component of Theorem~\ref{t1.1}.

\begin{lemma}\label{alcompact}
Suppose $u:I\times\R^4\to\C$ is a maximal-lifespan solution to \eqref{1.1} satisfying
\[
E(u)=E(W) \qtq{and} \|\nabla u_0\|_{L^2}< \|\nabla W\|_{L^2}.
\]
If $u$ blows up forward in time, then $u$ is almost periodic on $[0,\sup I)$.  Similarly, if $u$ blows up backward in time, then $u$ is almost periodic on $(\inf I,0]$.
\end{lemma}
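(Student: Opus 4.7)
The plan is a contradiction argument via the linear profile decomposition, with the $d=4$ non-radial sub-threshold scattering result (Theorem~\ref{thm:KM}(i)) as the main obstruction. By time reversal it suffices to treat the forward-blowup case. Suppose for contradiction that $u$ is not almost periodic on $[0,\sup I)$; by the standard characterization of almost periodicity as precompactness of the orbit modulo $G$, there must exist a sequence $\{t_n\}\subset[0,\sup I)$ no subsequence of which converges in $\dot H^1$ modulo $G$. Since Lemma~\ref{GlobalS} ensures $\{u(t_n)\}$ is uniformly bounded in $\dot H^1$, I apply the linear profile decomposition (Lemma~\ref{L:cc}) to obtain profiles $\phi^j$, symmetries $g_n^j$, time parameters $t_n^j$, and remainders $w_n^J$.

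The hypotheses $E(u_0)=E(W)$ and $\|\nabla u_0\|_{L^2}<\|\nabla W\|_{L^2}$ sharply constrain the decomposition. The kinetic decoupling~\eqref{profile4} combined with Lemma~\ref{GlobalS} gives $\|\nabla\phi^j\|_{L^2}<\|\nabla W\|_{L^2}$ for every $j$ and $\|\nabla w_n^J\|_{L^2}<\|\nabla W\|_{L^2}$ for all sufficiently large $n$. Combined with~\eqref{profile5} (and using linear dispersion to handle any profiles with $|t_n^j|\to\infty$), this yields the energy decoupling
\begin{equation*}
E(W)\;=\;\sum_{j=1}^{J}E(U^j)\;+\;\lim_{n\to\infty}E(w_n^J)\;+\;o_J(1),
\end{equation*}
where $U^j$ denotes the nonlinear profile associated with $(\phi^j,t_n^j)$ and every term on the right is non-negative by Lemma~\ref{varlemma}. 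In particular, at most one nonlinear profile can attain the full threshold energy $E(W)$; every other nontrivial profile must be strictly sub-threshold in energy while still respecting the kinetic bound.

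Theorem~\ref{thm:KM}(i) then supplies a global scattering $U^j$ with finite $L^6_{t,x}$ norm for every strictly sub-threshold profile. If every nontrivial profile is strictly sub-threshold, I superpose the nonlinear profiles, controlling cross terms via the asymptotic orthogonality~\eqref{profile3} and handling the remainder through the vanishing linear-flow bound~\eqref{w scat}; the stability Lemma~\ref{pertu} then yields a uniform bound on $S_{[t_n,\sup I)}(u)$, contradicting forward blowup. The only remaining case has exactly one profile $\phi^1$ of threshold energy, with vanishing remainder $w_n\to 0$ in $\dot H^1$ (deduced from $E(w_n)\to 0$ via Lemma~\ref{varlemma}); I then argue that $t_n^1$ is bounded, for then, passing to a subsequence with $t_n^1\to t_\ast$, one gets $g_n^{-1}u(t_n)\to e^{it_\ast\Delta}\phi^1$ in $\dot H^1$, contradicting non-precompactness. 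When $t_n^1\to+\infty$, the free evolution $e^{i(\cdot-t_n)\Delta}u(t_n)$ has vanishing $L^6_{t,x}$ norm on $[t_n,\sup I)$, so Lemma~\ref{pertu} applied with this linear flow as the approximate solution rules out forward blowup. I expect the case $t_n^1\to-\infty$ to be the main technical obstacle, since there the linear flow of the data has only bounded (not small) forward $L^6_{t,x}$ norm; one must instead compare the nonlinear evolution of $u$ from $t_n$ to the wave-operator solution $U^1$ with backward asymptote $\phi^1$, using the strict sub-threshold kinetic bound $\|\nabla\phi^1\|_{L^2}<\|\nabla W\|_{L^2}$ to close the stability analysis, taking care to avoid any appeal to the threshold scattering conclusion that this lemma is meant to support.
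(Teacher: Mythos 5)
Your overall strategy coincides with the paper's (the paper only sketches this lemma, deferring to \cite[Section~2.3]{duyckaerts1}): apply the linear profile decomposition to $u(t_n)$ for an arbitrary sequence of times, use energy/kinetic decoupling together with Theorem~\ref{thm:KM}(i) and stability to rule out everything except a single threshold profile with vanishing remainder and bounded time parameters. Your treatment of the multi-profile case, the single sub-threshold profile case, and the case $t_n^1\to+\infty$ is correct in outline (modulo the minor point that decoupling gives $\|\nabla\phi^j\|_{L^2}\leq\|\nabla W\|_{L^2}$ rather than strict inequality a priori; strictness for each profile has to be extracted from positivity of the other terms).

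The genuine gap is the case $t_n^1\to-\infty$, which you flag but do not resolve, and the strategy you gesture at cannot work as stated. In that scenario the remainder vanishes and the single wave-operator profile $U^1$ carries the full threshold energy $E(U^1)=E(W)$ with sub-threshold kinetic energy; its \emph{forward} behavior is therefore exactly the dichotomy the whole paper is trying to establish --- $W^-$ (modulo symmetries) is precisely such a solution and does not scatter forward --- so no forward stability comparison with $U^1$ can produce a contradiction, and the ``strict sub-threshold kinetic bound on $\phi^1$'' buys you nothing in the forward direction. The correct resolution looks \emph{backward} and needs neither $U^1$ nor any threshold information: when $t_n^1\to-\infty$, the backward free evolution satisfies $\|e^{it\Delta}u(t_n)\|_{L_{t,x}^6((-\infty,0]\times\R^4)}\to 0$, so small-data theory (or Lemma~\ref{pertu} with the linear flow as approximate solution) gives $S_{(\inf I,\,t_n]}(u)\to 0$. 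Since $t_n\geq 0$, this forces $S_{(\inf I,\,0]}(u)=0$, hence $u\equiv 0$ on $(\inf I,0]$ and $u_0\equiv 0$, contradicting $E(u_0)=E(W)>0$. With this step supplied, your argument closes and matches the intended proof.
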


\begin{proof}[Sketch of the proof] The argument to obtain this lemma follows along fairly standard lines (see e.g. \cite[Section~2.3]{duyckaerts1}), so we only provide a brief sketch here. The idea is to take an arbitrary sequence of times $t_n$ and apply the linear profile decomposition to $u(t_n)$. Using Theorem~\ref{thm:KM} and stability theory, we can use the construction of a \emph{nonlinear} profile decomposition to argue that there must be exactly one profile in the profile decomposition (since in all other cases we obtain scattering for the solution $u$). This yields the desired compactness.
\end{proof}

\begin{proposition}\label{T:no ftb} Suppose $u:I\times\R^4\to\C$ is a maximal-lifespan solution to \eqref{1.1} satisfying
\[
E(u)=E(W) \qtq{and} \|\nabla u_0\|_{L^2}\leq \|\nabla W\|_{L^2}.
\]
Then $I=\R$.
\end{proposition}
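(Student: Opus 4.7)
The plan for the boundary case $\|\nabla u_0\|_{L^2} = \|\nabla W\|_{L^2}$ is direct: together with $E(u_0) = E(W)$ and the Pohozaev identity $E(W) = \tfrac14\|W\|_{\dot H^1}^2$, it forces equality in the sharp Sobolev embedding, so Lemma~\ref{GNsharpz} gives $u_0 = e^{i\theta}W(\lambda(\cdot+y))$ modulo symmetries; since $W$ is stationary, $I = \R$ here. I therefore concentrate on the strict case $\|\nabla u_0\|_{L^2} < \|\nabla W\|_{L^2}$, aiming for a contradiction from $T_+ := \sup I < \infty$ (the case $\inf I > -\infty$ being symmetric). Lemma~\ref{alcompact} gives almost periodicity of $u$ on $[0,T_+)$ with parameters $N(t), x(t)$, and Lemma~\ref{GlobalS} gives the uniform bound $\|u(t)\|_{\dot H^1} < \|W\|_{\dot H^1}$.

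The first reduction is to $N(t) \to \infty$ as $t \to T_+$ and $|x(t)| = O(1)$ on $[0,T_+)$. If $N(t_n)$ stayed bounded along some sequence $t_n \to T_+$, almost periodicity would yield a subsequential $\dot H^1$-limit $\phi$ of the rescaled translates $N(t_n)^{-1}u(t_n, N(t_n)^{-1}\cdot + x(t_n))$, and solving \eqref{1.1} with data $\phi$ via Theorem~\ref{T:local} together with the stability Lemma~\ref{pertu} would extend $u$ past $T_+$, contradicting maximality. Boundedness of $x(t)$ will follow from a truncated center-of-mass computation that uses $T_+ < \infty$, the $\dot H^1$ bound, and the concentration of kinetic energy near $x(t)$ supplied by almost periodicity.

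The heart of the argument is a localized-mass computation. Introduce $M_R(t) := \int \chi(x/R)|u(t,x)|^2\,dx$ for $\chi \in C_c^\infty(\R^4)$ with $\chi \equiv 1$ on $|x| \le 1$ and $\chi \equiv 0$ on $|x| \ge 2$; this is finite for each $R$ by Hardy's inequality in $d = 4$ and the $\dot H^1$ bound. From \eqref{1.1},
\[
\tfrac{d}{dt}M_R(t) = 2\Im\int\nabla\chi(\cdot/R)\cdot\bar u\,\nabla u\,dx,
\]
and combining $\|\nabla\chi(\cdot/R)\|_\infty \lesssim 1/R$, the Hardy bound $\|u\|_{L^2(R\le|x|\le 2R)} \lesssim R\|\nabla u\|_{L^2}$, and the almost-periodicity estimate $\|\nabla u\|_{L^2(|x|\ge R)} \le \eta^{1/2}$ (valid once $R$ is large enough that $\{|x|\ge R\} \subset \{|x-x(t)| \ge C(\eta)/N(t)\}$, which holds on $[t^*(\eta),T_+)$ by the previous step), I obtain $|\tfrac{d}{dt}M_R(t)| \le C\eta^{1/2}\|W\|_{\dot H^1}$ there. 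Independently, splitting $M_R(t)$ over $B(x(t),C(\eta)/N(t))$ and bounding the inside by H\"{o}lder and Sobolev and the outside by the $L^4$ form of almost periodicity gives $M_R(t) \lesssim N(t)^{-2} + R^2\eta^{1/2}$, whence $\lim_{t\to T_+}M_R(t) = 0$ for every fixed $R$.

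Integrating the derivative bound back from $T_+$ now yields $M_R(t) \le C\eta^{1/2}T_+$ for all $R \ge R_0$ and $t \in [t^*(\eta),T_+)$, and monotonicity of $M_R$ in $R$ extends this bound to all $R > 0$. Monotone convergence in $R \to \infty$ then delivers $u(t) \in L^2$ with $\|u(t)\|_{L^2}^2 \le C\eta^{1/2}T_+$ on $[t^*(\eta),T_+)$, and standard $L^2$ mass conservation (applicable once $u$ is known to lie in $L^2$ at a single time) propagates this to all of $[0,T_+)$; sending $\eta \to 0$ forces $u \equiv 0$, contradicting $E(u) = E(W) > 0$. The main obstacle in this plan is running both halves of the $M_R$ estimate — the Lipschitz-in-$t$ derivative bound and the vanishing at $T_+$ — purely from the $\dot H^1$ bound and almost periodicity, without any a priori $L^2$ control; the $d = 4$ Hardy inequality and the control $|x(t)| = O(1)$ are the essential bridges that make this possible.
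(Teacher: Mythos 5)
Your overall strategy is the paper's: reduce to $N(t)\to\infty$ as $t\to\sup I$ via the rescaling/compactness argument, show that the localized mass vanishes as $t\to\sup I$, control $\tfrac{d}{dt}M_R$, and conclude that $u$ is an $L^2$ solution with zero mass, contradicting $E(u)=E(W)>0$. The boundary case, the reduction to $N(t)\to\infty$, the proof that $\lim_{t\to\sup I}M_R(t)=0$ (splitting relative to $x(t)$ at scale $C(\eta)/N(t)$, H\"older inside, almost periodicity outside), and the final mass-conservation step all match the paper.

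There is, however, one genuine gap: your derivative estimate $|\tfrac{d}{dt}M_R(t)|\le C\eta^{1/2}\|W\|_{\dot H^1}$ is load-bearing and rests on the inclusion $\{|x|\ge R\}\subset\{|x-x(t)|\ge C(\eta)/N(t)\}$, which requires the unproven claim $|x(t)|=O(1)$ on $[0,T_+)$. You only gesture at a proof of this claim via a ``truncated center-of-mass computation,'' but such quantities ($\int\chi(x/R)\,x\,|u|^2\,dx$ and even $\int\chi(x/R)|u|^2\,dx$ at unit scale around a moving, possibly unbounded center) are exactly the objects you have no a priori control over for an $\dot H^1$-only solution; the sketch as stated does not obviously close, and nothing elsewhere in your argument supplies boundedness of $x(t)$. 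The paper sidesteps this entirely: it uses only the crude bound $|\tfrac{d}{dt}\|u(t)\chi_R\|_{L^2}^2|\lesssim R^{-1}\|\nabla u\|_{L^2}\|u\|_{L^4}\|\tilde\chi_R\|_{L^4}\lesssim 1$, uniformly in $t$ and $R$ (Hardy/H\"older plus the uniform $\dot H^1$ bound, with no localization of the gradient near $x(t)$), and extracts the needed smallness not from the derivative but from the shrinking length of the time interval: integrating from $t$ to $\sup I$ and using the endpoint vanishing gives $\|u(t)\chi_R\|_{L^2}^2\le C|\sup I-t|$, after which mass conservation forces $u\equiv 0$. Since you have already established the endpoint vanishing of $M_R$, you should replace your small-gradient-tail derivative bound by this crude $O(1)$ bound; that removes all dependence on $x(t)$ and closes the argument without the center-of-mass step.
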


\begin{proof} Note that if $\|\nabla u_0\|_{L^2}=\|\nabla W\|_{L^2}$, then by the variational characterization of $W$ we must have $u=W$ (modulo symmetries).  In particular, $u$ is global.  Thus it suffices to treat the case of strict inequality in the statement above.

Now let $u:I\times\R^4\to\C$ be as in the statement of the proposition and suppose towards a contradiction that $\sup I<\infty$. (The case that $\inf I$ is finite is similar.) By Lemma~\ref{alcompact}, we have that $u$ is almost periodic on $[0,\sup I)$. 

We first claim that
\begin{align}\label{f2.13}
\liminf\limits_{t\nearrow\sup I} N(t) =\infty.
\end{align}
Indeed, if \eqref{f2.13} fails, then we can choose $t_n \to \sup{I}$ such that $\lim_n N(t_n)<\infty$, and define the rescaled functions $v_n: I_n\times\R^4\to \C$ by
\begin{equation*}
v_n(t,x):=  \tfrac{1}{N(t_n)}u\bigl( t_n + \tfrac{t}{N(t_n)^2}, \tfrac{x}{N(t_n)}+x(t_n)\bigr),
\end{equation*}
where 
\[
0\in I_n:=\bigl\{t\in \R:\ t_n+\tfrac{t}{N(t_n)^{2}}\in I  \bigr\}.
\]
Then each $v_n(t)$ is the solution to \eqref{1.1} and $\left\{v_n(0)\right\}_n$ is precompact in $\dot{H}^1(\R^4)$.  Passing to a subsequence, we may assume that $v_n(0)$ converges in $\dot{H}^1(\R^4)$ to some function $v_0(x)$.   By Lemma~\ref{varlemma}, we have $E(v_0)=E(W)$ and $\|\nabla v_0\|_{L^2}\leq \|\nabla W\|_{L^2}$.

We let $v$ be the maximal-lifespan solution to \eqref{1.1} with $v|_{t=0}=v_0$. Using the local theory for \eqref{1.1}, we may obtain that $\limsup_n S_{J}(v_n)<\infty$ for some compact interval $J\ni 0$. In particular, we have $\limsup_n S_{J_n}(u)<\infty$, where 
\[
J_n:=\bigl\{t_n+\tfrac{t}{N(t_n)^{2}}:\ t\in J\bigr\}.
\]
However, as $t_n\to\sup I$ and $\lim_n N(t_n)<\infty$, a further application of the local theory implies that $u$ exists beyond time $\sup I$, a contradiction.  This proves \eqref{f2.13}.

Next we claim that \eqref{f2.13} implies
\begin{align}\label{f2.14}
\limsup_{t\to \sup I}\|u(t)\chi_R\|_{L^2} = 0 \qtq{for any}R>0,
\end{align}
where $\chi_R$ is a smooth cutoff to $\{|x|\leq R\}$. Indeed, given any $R>0$ and $0<\eta\ll1$ we can use H\"older's inequality, Sobolev embedding, and Lemma~\ref{GlobalS} (which yields uniform $\dot H^1$-bounds) to estimate
\begin{align}\nonumber
\| u(t)\chi_R\|_{L^2} &\leq \|u(t)\|_{L^2(|x-x(t)|\leq \eta R)} + \|u(t)\chi_R\|_{L^2(|x-x(t)|>\eta R)} \\\nonumber
&\lesssim \eta R \|u(t)\|_{L^4}+ R\|u(t)\|_{L^4(|x-x(t)|>\eta R)} \\
& \lesssim \eta R + R\|u(t)\|_{L^4(|x-x(t)|>\eta R)}.
\end{align}
Now, given $\eps>0$ we choose $\eta$ small enough that $\eta R<\eps$.  We then use almost periodicity and the fact that $N(t)\to\infty$ as $t\to\sup I$ to see that the the second term vanishes as $t\to\sup I$. 

Now observe that using \eqref{1.1} and integrating by parts, we have
\[
\tfrac{d}{dt} \|u(t)\chi_R\|_{L^2}^2 \lesssim R^{-1} \|\nabla u(t)\|_{L^2} \| u(t)\|_{L^4} \|\tilde\chi_R\|_{L^4}
\]
for some function $\tilde \chi_R$ that is also localized to $|x|\leq R$.  In particular, the quantity above is uniformly bounded in $t$ and $R$, so that for any $t_1<t_2<\sup I$ we have from the Fundamental Theorem of Calculus that
\[
\|u(t_1)\chi_R\|_{L^2}^2 \leq \|u(t_2)\chi_R\|_{L^2}^2 + C|t_2-t_1|
\]
for some universal $C>0$. In particular, using \eqref{f2.14}, we obtain
\[
\|u(t)\chi_R\|_{L^2}^2 \leq C|\sup I - t| \qtq{for any} t<\sup I\qtq{and for any}R>0.
\]
In particular, this implies that $u$ is an $L^2$ solution obeying
\[
\|u(t)\|_{L^2}^2 \leq C|\sup I -t| \qtq{for any} t<\sup I.
\]
As mass is conserved for $L^2$ solutions and $\sup I$ is assumed to be finite, this inequality forces $u$ to be identically zero. However, this contradicts that $E(u)=E(W)>0$. Thus we conclude that $\sup I = \infty$, as desired. \end{proof}

\section{A sequential convergence result}

The goal of this section is to establish Proposition~\ref{sequence}, which states that a subcritical threshold solution that fails to scatter must converge to the ground state (in the sense that $\delta(u)\to 0$) along some sequence of times.

\begin{proposition}\label{sequence}
Suppose $u$ is a solution to \eqref{1.1} satisfying
\[
E(u) = E(W) \qtq{and} \|\nabla u_0\|_{L^2}<\|\nabla W\|_{L^2}.
\]
In particular, by Proposition~\ref{T:no ftb}, $u$ is global. If $u$ blows up forward or backward in time, then there exists a sequence $\{t_n\}\subset\R$ such that $\delta(u(t_n))\to 0$.  
\end{proposition}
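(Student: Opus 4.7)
The plan is to argue by contradiction, assuming without loss of generality that $u$ blows up forward in time. If no sequence $t_n$ with $\delta(u(t_n)) \to 0$ exists, then in view of Lemma~\ref{GlobalS} and the definition of $\delta(\cdot)$, there are $\eta_0>0$ and $T\geq 0$ so that
\[
\|\nabla u(t)\|_{L^2}^2 \leq \|\nabla W\|_{L^2}^2 - \eta_0 \qtq{for all} t\geq T.
\]
Moreover, Lemma~\ref{alcompact} supplies that $u$ is almost periodic on $[0,\infty)$, with some frequency scale $N(t)$ and spatial center $x(t)$.

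The next step is a standard concentration-compactness/rescaling reduction. Using the scaling symmetry of \eqref{1.1} to normalize along a sequence of times $t_n\to\infty$ and extracting a limit by means of Lemma~\ref{L:cc}, the local theory (Theorem~\ref{T:local}), and the stability result Lemma~\ref{pertu}, I would produce a nontrivial solution $v:\R\times\R^4\to\C$ to \eqref{1.1} that is almost periodic on all of $\R$, with frequency scale $N_v(t)\geq 1$ for every $t\in\R$, and which inherits the subthreshold kinetic-energy bound
\[
\sup_{t\in\R}\|\nabla v(t)\|_{L^2}^2 \leq \|\nabla W\|_{L^2}^2 - \eta_0.
\]
The nontriviality of $v$ is ensured by the assumption that $u$ does not scatter forward in time (since scattering would prevent the blow-up of $S_{[T,\infty)}(u)$ and hence would also prevent the limit from degenerating in the $L_{t,x}^6$ sense), while Lemma~\ref{varlemma} bounds $E(v)$ below by a positive constant.

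At this point I would invoke Dodson's analysis from \cite{dodson1} in the form recorded later in the paper. For an almost periodic solution to \eqref{1.1} on $\R$ with $\inf_{t\in\R} N(t)\geq 1$ and subthreshold kinetic energy, there is the standard dichotomy between a frequency cascade scenario (with $N$ unbounded) and a quasi-soliton scenario (with $N$ bounded above and below). The cascade case is ruled out by Dodson's argument, which depends only on almost periodicity and the uniform $\dot H^1$ bound, not on the size of the energy. The quasi-soliton case is ruled out by the interaction Morawetz estimate, whose validity under the bare subthreshold kinetic-energy hypothesis is recorded in Proposition~\ref{P:nscqs}. Both alternatives force $v\equiv 0$, contradicting the nontriviality established in the reduction step. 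Hence the initial contradiction hypothesis is untenable, and the desired sequence exists.

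The main obstacle is precisely the subtle point emphasized in the introduction: Dodson's interaction Morawetz argument was originally written assuming $E(u)<E(W)$, while in our threshold setting we have only a strict subthreshold bound on $\|\nabla u(t)\|_{L^2}^2$ (together with $E(u)=E(W)$). Verifying that Dodson's estimate goes through using only this weaker, purely kinetic, hypothesis is the reason that Section~\ref{S:Dodson} is devoted to an abbreviated re-derivation of the interaction Morawetz bound. The remaining inputs — the concentration-compactness extraction of $v$, the normalization $\inf N_v \geq 1$, and the exclusion of the cascade scenario — are by now standard for energy-critical NLS and can be implemented essentially verbatim from \cite{dodson1} and the references therein.
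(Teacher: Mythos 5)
Your argument is correct and follows essentially the same path as the paper: reduce via Lemma~\ref{alcompact} and the standard rescaling/normalization to a nontrivial almost periodic solution on all of $\R$ with $\inf N\geq 1$, rule out the frequency cascade by Dodson's mass/long-time-Strichartz argument, and invoke Proposition~\ref{P:nscqs} (which indeed requires only the kinetic-energy gap) in the quasi-soliton case. The only difference is organizational: you argue by contradiction so that the extracted solution directly inherits the uniform bound $\sup_t\|\nabla v(t)\|_{L^2}^2\leq\|\nabla W\|_{L^2}^2-\eta_0$, whereas the paper argues forward, concluding $\sup_t\|w(t)\|_{\dot H^1}=\|W\|_{\dot H^1}$ for the extracted solution and then transferring the resulting sequence back to $u$ through two applications of the stability lemma; both organizations are sound.
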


\begin{proof} Without loss of generality, we assume that $u$ blows up forward in time.  Our first step will be to argue for the existence of a global solution that blows up in both time directions (and hence is almost periodic on all of $\R$).

We let $\tau_n\to\infty$ be an increasing sequence.  Applying Lemma~\ref{alcompact}, we may write 
\[
u(\tau_n) = g_n \phi + w_n
\]
for some group elements $g_n$ (corresponding to scales $\lambda_n$ and translation parameters $x_n$) and functions $w_n$ satisfying $\|w_n\|_{\dot H^1}\to 0$.

We let $v$ be the solution to \eqref{1.1} with $v|_{t=0}=\phi$.  By the $\dot H^1$ convergence, we have $E(v)=E(W)$ and $\|v(0)\|_{\dot H^1}\leq \| W\|_{\dot H^1}$.  In particular, by Proposition~\ref{T:no ftb} the solution $v$ is global.  On the other hand, as
\[
S_{(-\infty,\tau_n]}(u) \to \infty \qtq{and} S_{[\tau_n,\infty)}(u)\to\infty,
\]
we must have $S_{(-\infty,0]}(v)=S_{[0,\infty)}(v)=\infty$.  Thus by Lemma~\ref{alcompact}, $v$ is almost periodic on $\R$.  

We now assert the following: there exist $\{s_m\}\subset\R$ and group elements $\{g_m\}$ such that 
\begin{equation}\label{th5v26}
\delta(v(s_m))=\delta(g_m v(s_m))\to 0 \qtq{as}m\to\infty. 
\end{equation}
Assuming \eqref{th5v26} for the moment, let us complete the proof of Proposition~\ref{sequence}:

First, passing to a subsequence, we may assume that
\begin{equation}\nonumber
\delta(g_mv(s_m ))\leq 2^{-m }.
\end{equation}
To keep the formulas in the margins below, let us introduce the notation
\[
\tilde u_{m,n}(x) = \lambda_n u(\tau_n+\lambda_n^2 s_m,\lambda_nx + x_n).
\]
By construction,
\[
\lambda_n u(\tau_n, \lambda_n x + x_n) \to \phi(x) \qtq{in}\dot H^1.
\]
Thus, for fixed $m$, we have by the stability result (Lemma~\ref{pertu}) that for $n=n(m)$ sufficiently large
\begin{align*}
\|\tilde u_{m,n}(x) - v(s_m,x)\|_{\dot H^1} &\leq C(s_m) \|\lambda_n u(\tau_n,\lambda_n x + x_n)-\phi(x)\|_{\dot H^1} \\
& \leq C(s_m) o_n(1) \qtq{as}n\to\infty. 
\end{align*}
Thus by the triangle inequality (and uniform $\dot H^1$-boundedness of $v$ and $u$)
\begin{align*}
\delta(u(\tau_n+\lambda_n^2 s_m)) & = \delta(\tilde u_{m,n}) \leq |\delta(v(s_m))| + \bigl| \| v(s_m)\|_{\dot H^1}^2 - \|\tilde u_{m,n}\|_{\dot H^1}^2\bigr| \\
& \lesssim |\delta(v(s_m))| + C(s_m)o_n(1) \\
& \lesssim 2^{-m} + C(s_m)o_n(1).
\end{align*}
Thus for each $m$, there exists $n=n(m)$ such that 
\[
\delta(u(\tau_{n(m)}+\lambda_{n(m)}^2 s_m))\lesssim 2^{-m},
\]
so that Proposition~\ref{sequence} holds with $t_m:=\tau_{n(m)}+\lambda_{n(m)}^2 s_m$.

It therefore remains to prove \eqref{th5v26}.  

To this end, we will first argue for the existence of another almost periodic solution, which has frequency scale bounded from below.  The construction, which utilizes the notion of the normalization of an almost periodic solution, is standard by now (see e.g. \cite[Section~5.4]{KV1}).  Thus we will omit the details and instead refer the reader to \cite{KV1} for a clear and thorough exposition.

In particular, we may construct a solution $w:\R\times\R^4\to\C$ to \eqref{1.1} such that
\[
w(0) = \lim_{n\to\infty} \tilde g_n v(r_n)
\]
for some group elements $\tilde g_n$ and some sequence $\{r_n\}\subset\R$.  This solution satisfies $E(w)=E(W)$ and $\|\nabla w(0)\|_{L^2}\leq \|\nabla W\|_{L^2}$ and is almost periodic, with some spatial center $x(t)$ and frequency scale $N(t)$ satisfying $\inf_{t\in\R}N(t)\geq 1$.  
 
Arguing as we did above, we now observe that to prove \eqref{th5v26} it suffices to show that
\begin{equation}\label{th5v262}
\qtq{there exists}\{s_m\}\subset\R\qtq{such that}\delta(w(s_m))\to 0\quad \text{as}\;m\to\infty.
\end{equation}
The proof of \eqref{th5v262} follows from analysis of the solution $w$ very much in the spirit of Dodson's work \cite{dodson1}.  In fact, the arguments of \cite{dodson1} can essentially be applied directly in our setting to obtain this result, as we will now explain.

We begin by reviewing the result and strategy of \cite{dodson1} in fairly general terms.  We provide additional technical details in Section~\ref{S:Dodson} below. 


As discussed in the introduction, \cite{dodson1} establishes scattering for solutions $w$ to \eqref{1.1} obeying $E(w)<E(W)$ and $\|\nabla w(0)\|_{L^2}<\|\nabla W\|_{L^2}$.  The proof is by contradiction.  Under the assumption that the result is false, Dodson first deduces the existence of a global almost periodic solution $w$ satisfying $E(w)<E(W)$ and $\|\nabla w(0)\|_{L^2}<\|\nabla W\|_{L^2}$, with frequency scale function satisfying $\inf N(t)\geq 1$.  Dodson then considers the following two alternatives, described in terms of the behavior of $N(t)$:
\begin{itemize}
\item[(i)] $K:=\int_\R N(t)^{-2}\,dt<\infty$, the \emph{frequency cascade} scenario, and
\item[(ii)] $K:=\int_\R N(t)^{-2}\,dt=\infty$, the \emph{quasi-soliton} scenario. 
\end{itemize}

To complete the proof, Dodson must prove that neither scenario can occur.  Essential to the analysis are Dodson's \emph{long-time Strichartz estimates}, which provide control over space-time norms of the frequency components of the solution in terms of the quantity $K$.  These estimates only require almost periodicity and are not related to any scattering threshold; indeed, they are completely insensitive to the sign of the nonlinearity.  Another technical ingredient is the establishment of additional decay for almost periodic solutions with $N(t)\geq 1$, e.g. the estimate $w\in L_t^\infty L_x^3$.  This estimate is also insensitive to the sign of the nonlinearity and hence has no connection to the scattering threshold.  

Using the long-time Strichartz estimates and conservation of mass, Dodson precludes scenario (i).  Specifically, one can show that almost periodic solutions escaping to high frequencies must have zero mass, which yields a contradiction.  As none of the tools involved in the analysis have any connection to the scattering threshold, we can therefore deduce that the particular solution $w$ we have constructed above must fall into the quasi-soliton scenario.

Dodson's final step is to preclude the existence of quasi-solitons, and it is here that he uses the sub-threshold assumption in an essential way.  He combines the additional decay, the long-time Strichartz estimates, a double Duhamel argument yielding a local mass estimate with logarithmic loss, and the subthreshold assumption to establish an interaction Morawetz estimate that ultimately precludes the possibility of a quasi-soliton.  In particular, {his arguments establish} the following result, which we explain in Section~\ref{S:Dodson} in some detail:

\begin{proposition}[No subcritical quasi-solitons, \cite{dodson1}]\label{P:nscqs}  Suppose $u:\R\times\R^4\to\C$ is an almost periodic solution to \eqref{1.1} satisfying
\[
\sup_{t\in\R} \|u(t)\|_{\dot H_x^1}<\|W\|_{\dot H^1},\quad \inf_{t\in\R}N(t)\geq 1,\qtq{and} \int_\R N(t)^{-2}\,dt = \infty.
\]
Then $u\equiv 0$. 
\end{proposition}

Using Proposition~\ref{P:nscqs}, we can complete the proof of \eqref{th5v262} and hence of Proposition~\ref{sequence}.  Indeed, we have already described above that the solution $w$ we have extracted must fall into the quasi-soliton scenario.  As $w$ is nonzero, Lemma~\ref{GlobalS} and Proposition~\ref{P:nscqs} therefore imply that
\[
\sup_{t\in\R}\|w(t)\|_{\dot H_x^1} = \|W\|_{\dot H_x^1}. 
\]
In particular, there exists a sequence $\{s_m\}\subset\R$ such that $\delta(w(s_m))\to 0$. \end{proof}

\subsection{Dodson's analysis of almost periodic solutions}\label{S:Dodson} In this subsection, we follow the presentation in \cite{dodson1} and sketch some of the technical details related to Proposition~\ref{P:nscqs}.  In particular, this proposition follows from the interaction Morawetz of \cite{dodson1}, as we now describe. 

In what follows, we assume $u:\R\times\R^4\to\C$ is a \emph{nonzero} solution to \eqref{1.1} satisfying the hypotheses of Proposition~\ref{P:nscqs} and seek a contradiction.  We denote the compactness parameters of $u$ by $x(t)$ and $N(t)$.  We fix an interval $I=[t_1,t_2]$ and let $K=K_I=\int_I N(t)^{-2}\,dt$.  As we are in the quasi-soliton scenario, we may assume $K$ is arbitrarily large by taking $I$ sufficiently large inside $\R$.

We let $\psi\in C_c^\infty(\R^4)$ be a radial function satisfying
\[
\psi(x) = \begin{cases} 1 & |x|\leq 1 \\ 0 & |x|\geq2.\end{cases}
\]
We then let $J\gg1$ (later chosen such that $e^J = K^{\frac{1}{12}}$) and define
\[
\phi(x-y) = \tfrac{1}{J}\int_1^{e^J} \int_{\R^4} \psi^2(\tfrac{x}{R}-s)\psi^2(\tfrac{y}{R}-s)\,ds\,\tfrac{dR}{R}. 
\]

The interaction Morawetz quantity is then defined to be
\[
M(t) = \iint |u(y)|^2 \phi((x-y)n(t))(x-y)\cdot \Im[\bar u \nabla u](x)\,dx
\]
for a certain function $n(t)$ that is related to the frequency scale function of $u$ (denoted by $N_m(t)$ in the reference \cite{dodson1}).  

The choice of $n(t)$ is one of the many subtle (and brilliant) points of Dodson's analysis. Given that the physical scale of $u$ is $N(t)^{-1}$, a natural choice would be to take $n(t)=N(t)$.  However, when one differentiates $M(t)$ with respect to time, one will then encounter a term containing $N'(t)$, over which one does not have good control.  Dodson's approach is to choose $n(t)$ to be the output of a `smoothing algorithm', which takes as input a function closely related to $N(t)$ and returns progressively smoother/monotone functions that retain certain essential features.

In particular, given an interval $I$, Dodson first chooses
\[
n_0(t) := \|P_{>K^{-\frac14}} u(t)\|_{L_x^3}^{-3}, 
\]
which satisfies $1\lesssim n_0(t)\lesssim N(t)$. Dodson then defines an algorithm (see \cite[Definition~6.1]{dodson1}) that produces functions $n_m$ satisfying the following for $t\in I$:
\begin{itemize}
\item $n_0(t)\lesssim n_m(t) \lesssim 2^m n_0(t),$
\item $n_m'(t) = 0 \qtq{or} n_m(t) = n_1(t),$
\item $|n_m'(t)|\lesssim |n_m(t)|^3,$
\item $\int n_m(t)^{-2}\,dt \lesssim K,$
\item $\int |n_m'(t)| [n_m(t)]^{-5}\,dt \lesssim 1+2^{-4m}K.$
\end{itemize}

By choosing $m$ large enough, one can obtain sufficiently good control over the term in the Morawetz estimate containing $n_m'(t)$ (see \eqref{Dod-6} below). On the other hand, $n_m(t)$ grows (pointwise in $t$) as $m$ increases, while the weight appearing in $M(t)$ is only nonzero for $|x-y|\lesssim \tfrac{e^J}{n_m(t)}$. Thus one must choose the various parameters carefully in order to balance all of these considerations, so that for example $\tfrac{e^J}{n_m(t)} \gtrsim \tfrac{K^{\eps}}{N(t)}$ for some $\eps>0$. This is ultimately done by choosing $2^m$ to be some fractional power of $K$.  Similar techniques were also employed in \cite{dodson2} and are explained in some detail there (see e.g. \cite[Section~5.3]{dodson2}).

At the most basic level, the interaction Morawetz is then based on using the Fundamental Theorem of Calculus in the form
\begin{equation}\label{FTCID}
\int_{t_1}^{t_2} \tfrac{dM}{dt}\,dt = M(t_2)-M(t_1).
\end{equation}
One seeks a suitable upper bound on $\sup_{t\in I}|M(t)|$ and a suitable \emph{lower} bound on $\tfrac{dM}{dt}$ in order to obtain a meaningful estimate. 

By construction (and the fact that the function $n(t)$ is bounded below) one readily obtains the upper bound
\[
\sup_{t\in I} |M(t)| \lesssim e^{4J}. 
\]

Using subscripts to denote partial derivatives, explicit calculation using \eqref{1.1} and integration by parts shows that
\begin{align}
\tfrac{dM}{dt} & = 2\iint |u(y)|^2 \phi((x-y)n(t))[|\nabla u(x)|^2 - |u(x)|^4]\,dx\,dy \label{Dod-1} \\
& \ - 2\iint \Im[\bar u u_j](y) \phi((x-y)n(t))\Im[\bar u u_j](x)\,dx\,dy \label{Dod-2} \\
& \ + 2\iint |u(y)|^2\phi_k((x-y)n(t)) (x-y)_j[\Re[\bar u_j u_k] - \tfrac{\delta_{jk}}4|u|^4](x)\,dx\,dy \label{Dod-3} \\
& \  - 2\iint \Im[\bar u u_k](y)\phi_k((x-y)n(t))(x-y)_j \Im[\bar u u_j](x)\,dx\,dy \label{Dod-4} \\
& \  + \tfrac12\iint |u(y)|^2 \Delta\phi_j((x-y)n(t))(x-y)_j |u(x)|^2\,dx\,dy \label{Dod-5}\\
& \  + {\iint |u(y)|^2\phi_k((x-y)n(t))(x-y)_j(x-y)_k n'(t) \Im[\bar u u_j](x)\,dx\,dy}.\label{Dod-6}
\end{align}
Note that we often suppress explicit dependence of functions on time in order to keep formulas within the margins.  

The terms \eqref{Dod-1} and \eqref{Dod-2} are used to obtain the desired \emph{lower} bound, as we now describe.  First, rearrange these terms as follows:
\begin{align}
&\eqref{Dod-1}+\eqref{Dod-2} \nonumber \\
&\ = 2\iint \phi((x-y)n(y))[|u(y)|^2 |\nabla u(x)|^2 - \Im[\bar u u_j](y)\Im[\bar u u_j](x)]\,dx\,dy \label{Dod-invariant} \\
& \quad - 2\iint \phi((x-y)n(y))|u(y)|^2 | u(x)|^4\,dx\,dy. \label{Dod-leftovers}
\end{align}
Recalling the definition of $\phi$, one can verify directly that the expression
\[
\iint \psi^2(\tfrac{ny}{R}-s)\psi^2(\tfrac{nx}{R}-s)[|u(y)|^2 |\nabla u(x)|^2 - \Im[\bar uu_j](y)\Im[\bar u u_j](x)]\,dx\,dy
\]
(which is integrated appropriately in $s$ and $R$ to obtain \eqref{Dod-invariant}) is invariant under the replacement $u\mapsto e^{-ix\cdot\xi}u$ (for fixed $R$, $t$, and $s$).  We may therefore replace $u$ in this expression by $e^{-ix\cdot\xi}u$ for any choice of $\xi$.  In particular, we choose $\xi=\xi(t,s,R)$ such that (writing $u^\xi = e^{-ix\cdot \xi}u$ here and below) 
\[
\int_{\R^4}\psi^2(\tfrac{n(t)}{R}x-s)\Im[\bar u^\xi \nabla (u^\xi)](t,x)\,dx = 0. 
\]
Making this choice, we therefore obtain
\begin{align}
&\eqref{Dod-1}+\eqref{Dod-2}\nonumber \\
&\ = \tfrac{2}{J}\int_\Omega \psi^2(\tfrac{ny}{R}-s)\psi^2(\tfrac{nx}{R}-s)|u(y)|^2\bigl(|\nabla[u^\xi](x)|^2 - |u(x)|^4\bigr)\,d\Omega
\end{align}
where the integral is over $(x,y,s)\in\R^4\times\R^4\times\R^4$ and $R\in[1,e^J]$ and we write $d\Omega=\,dx\,dy\,ds\,\tfrac{dR}{R}$ to keep the formulas within the margins. We now apply the following integration by parts identity in the $x$ variable: for real-valued $\psi$, 
\[
\int |\nabla(\psi f)|^2 \,dx = \int \psi^2 |\nabla f|^2 \,dx- \int \psi \Delta\psi |f|^2\,dx.
\]
After a bit of rearranging, this yields
\begin{align}
&\eqref{Dod-1}+\eqref{Dod-2}\nonumber \\
&\ = \tfrac{2}{J} \int_\Omega \psi^2(\tfrac{ny}{R}-s)|u(y)|^2\bigl[ |\nabla [\psi(\tfrac{nx}{R}-s)u^\xi](x)|^2-\psi^2(\tfrac{nx}{R}-s)|u(x)|^4\bigr]\,d\Omega\label{Dod-LB-cont1} \\
& \ \ + \tfrac{2}{J}\int_\Omega \psi^2(\tfrac{ny}{R}-s)\psi(\tfrac{nx}{R}-s)\Delta[\psi(\tfrac{nx}{R}-s)]|u(y)|^2|u(x)|^2\,d\Omega.\label{Dod-LB-cont2}
\end{align}

Now we are finally in a position to utilize the subcritical assumption and sharp Sobolev embedding to exhibit a lower bound for the term \eqref{Dod-LB-cont1}. By assumption, there exists $\delta>0$ such that
\[
\|\nabla u(t)\|_{L_x^2}\leq (1-\delta)\|\nabla W\|_{L^2} \qtq{for all}t\in\R. 
\]
Applying the sharp Sobolev embedding estimate (Lemma~\ref{GNsharpz}), this also guarantees that
\[
\| u(t)\|_{L_x^4} \leq (1-\delta)\|W\|_{L^4}\qtq{for all}t\in\R. 
\]
Thus, focusing attention on the $\,dx$ portion of \eqref{Dod-LB-cont1} (and noting $|u|\equiv |u^\xi|$), we observe that by H\"older's inequality and sharp Sobolev embedding (Lemma~\ref{GNsharpz}),
\begin{align*}
\int &|\nabla[\psi(\tfrac{nx}{R}-s)u^\xi]u(x)|^2 - |\psi(\tfrac{nx}{R}-s) u^\xi|^2 |u|^2\,dx \\
& \geq \|\nabla[\psi(\tfrac{nx}{R}-s)u^\xi]\|_{L_x^2}^2 - \|\psi(\tfrac{nx}{R}-s)u^\xi\|_{L_x^4}^2 \|u\|_{L_x^4}^2 \\ 
& \geq [C_4^{-2}-\|u\|_{L_x^4}^2]\|\psi(\tfrac{nx}{R}-s)u^\xi\|_{L_x^4}^2 \\
& \geq [\|W\|_{L_x^4}^2 - \|u\|_{L_x^4}^2]\|\psi(\tfrac{nx}{R}-s)u\|_{L_x^4}^2 \\
& \gtrsim \delta \|W\|_{L_x^4}^2\|\psi(\tfrac{nx}{R}-s)u\|_{L_x^4}^2 \\
& \gtrsim \delta \|u\|_{L_x^4}^2\|\psi(\tfrac{nx}{R}-s)u\|_{L_x^4}^4 \\
& \gtrsim \delta \|\psi(\tfrac{nx}{R}-s)u\|_{L_x^4}^4
\end{align*}
uniformly in $t$.  It follows that
\begin{align*}
\eqref{Dod-LB-cont1} & \gtrsim \tfrac{\delta}{J} \int_\Omega \psi^2(\tfrac{ny}{R}-s)\psi^4(\tfrac{nx}{R}-s)|u(y)|^2|u(x)|^4\,d\Omega. 
\end{align*}
By construction, one can further obtain the lower bound
\[
\tfrac{1}{J}\int_1^{e^J} \int \psi^2(\tfrac{ny}{R}-s)^2\psi^4(\tfrac{nx}{R}-s)\,ds\,\tfrac{dR}{R} \gtrsim_c \psi(\tfrac{n(x-y)}{e^{cJ}})
\]
for any $0<c<1$.  For the sake of concreteness, Dodson chooses $c=\tfrac{11}{12}$.  We follow this choice as well, but we will continue to write $c$ to keep the formulas looking neat. Thus one can use almost periodicity to obtain the following lower bound:
\begin{equation}\label{the-lower-bound!}
\begin{aligned}
\eqref{Dod-LB-cont1}& \gtrsim  \delta \iint_{|x-y|\leq \frac{e^{cJ}}{n(t)}}|u(t,x)|^4 |u(t,y)|^2 \,dx\,dy \\
& \gtrsim \delta \int_{|x-x(t)|\leq \frac{e^{cJ}}{2n(t)}}|u(t,x)|^4\,dx \cdot \int_{|y-x(t)|\leq \frac{e^{cJ}}{2n(t)}} |u(t,y)|^2\,dy \\
& \gtrsim \delta \int_{|y-x(t)|\leq \frac{e^{cJ}}{2n(t)}} |u(t,y)|^2\,dy,
\end{aligned}
\end{equation}
where we have used that $\frac{e^{cJ}}{n(t)}\gg \tfrac{1}{N(t)}$ uniformly on $I$ (cf. the discussion of $n(t)$ above).  One therefore has the lower bound
\[
\eqref{Dod-1}+\eqref{Dod-2} \gtrsim \delta \int_{|y-x(t)|\leq \frac{e^{cJ}}{2n(t)}} |u(t,y)|^2\,dy - |\eqref{Dod-LB-cont2}|,
\]
with \eqref{Dod-LB-cont2} now regarded as an error term (that is, a term for which we will obtain a suitable \emph{upper} bound). 

We note that it is \emph{only} this part of the argument (i.e. obtaining a lower bound of the form \eqref{the-lower-bound!}) that uses the subcritical assumption on the $\dot H^1$-norm of the solution (in connection with sharp Sobolev embedding).  Moreover, the argument does \emph{not} directly use any assumption about the size of the total energy. This is the essential reason why we can apply Dodson's argument {directly} in the present paper. 

Having obtained a lower bound for \eqref{Dod-LB-cont1}, we next consider the error terms \eqref{Dod-LB-cont2} and \eqref{Dod-3}--\eqref{Dod-6} in $\tfrac{dM}{dt}$.  For \eqref{Dod-LB-cont2}, one uses properties of the function $\psi$ to obtain
\[
|\eqref{Dod-LB-cont2}| \lesssim \iint_{|x-y|\leq\frac{e^J}{n(t)}} |u(y)|^2 |u(x)|^2 |x-y|^{-2}\,dx\,dy,
\]
which turns out to be similar to \eqref{Dod-5}. Estimating the terms \eqref{Dod-3}--\eqref{Dod-6} then comprises a great deal of hard work in \cite{dodson1}.  It is here that Dodson employs his long-time Strichartz estimates, which (as mentioned above) depend only on almost periodicity (and are not even sensitive to the sign of the nonlinearity).  

In the end, Dodson shows that (integrating in time as in \eqref{FTCID} and utilizing the lower bound \eqref{the-lower-bound!}) one can obtain an estimate of the form
\begin{equation}\label{dodson-final1}
\begin{aligned}
\delta&\int_I \int_{|x-x(t)|\leq \frac{e^{cJ}}{2n(t)}} |u(t,x)|^2 \,dx\,dt \\
& \lesssim \tfrac{1}{J}\int_I \int_{|x-x(t)|\leq \frac{8e^J}{n(t)}} |u(t,x)|^2\,dx + (\eta+\tfrac{1}{Jc(\eta)}) K+ \tfrac{2^{-4m}e^{3J}}{J} + K^{\frac12}\tfrac{e^{5J}}{J} + e^{4J},
\end{aligned}
\end{equation}
where $0<\eta\ll1$ and each of the quantities on the right-hand side arises from estimating either $|M(t)|$ or some error term.  The expression with the factor $2^{-4m}$ arises from \eqref{Dod-6} (the term containing $n'(t)$).  The first term on the right-hand side is a \emph{localized mass} term arising from the terms \eqref{Dod-3}--\eqref{Dod-4}, and its estimation is another subtle point of Dodson's analysis.  

On the one hand, this term comes with the small factor $J^{-1}$, suggesting that it may be treated perturbatively.  However, the only direct estimate available for a localized term such as this one suffers a logarithmic loss that would cancel this factor (see \cite[Lemma~5.1]{dodson1}).  In particular, the estimate (which only relies on almost periodicity) implies that 
\begin{equation}\label{lemma5.1}
 \int_I \int_{|x-x(t)|\leq \frac{R}{N(t)}}|u(t,x)|^2\,dx\,dt \lesssim K\log(R)\qtq{for}1\ll R\lesssim K^{\frac15}.
\end{equation}
In the present setting, an application of this estimate would lead to an estimate of $JK$ for the localized mass term (cf. $J\sim\log K$), and the logarithmic loss would cancel the small factor $J^{-1}$.  In particular, this approach would not suffice to close the argument. 

On the other hand, the localized mass term on the right-hand side of \eqref{dodson-final1} strongly resembles the quantity that we have controlled on the \emph{left-hand side} of \eqref{dodson-final1}.  The only difference is that the radius for the term on the right-hand side is a larger multiple of $\tfrac{1}{n(t)}$, namely, $e^J$ instead of $e^{cJ}$.  Thus, we can apply the \emph{same} interaction Morawetz estimate (or repeat the same arguments) to obtain the following bound for this localized mass term:
\begin{align*}
\delta\int_I \int_{|x-x(t)|\leq \frac{8e^J}{n(t)}} |u(t,x)|^2\,dx\,dt &  \lesssim \tfrac{1}{J}\int_I \int_{|x-x(t)| \leq \frac{(8J)^{1/c}}{n(t)}} |u(t,x)|^2\,dx\,dt \\
& \quad + (\eta+\tfrac{1}{Jc(\eta)}) K + \tfrac{2^{-4m} e^{3J/c}}{J} + K^{\frac12} \tfrac{e^{5J/c}}{J} + e^{4J/c}. 
\end{align*}

We can now insert \emph{this} estimate back into \eqref{dodson-final1}.  At this point it is also convenient to specify parameters and simplify the right-hand side.  Choosing $J$ such that $e^{12J}=K$ and $m$ such that $2^{4m}=e^{\frac{10J}{3}}$, the estimate \eqref{dodson-final1} then reduces to
\begin{equation}\label{dodson-final2}
\begin{aligned}
\delta \int_I \int_{|x-x(t)|\leq \frac{e^{cJ}}{n(t)}}|u(t,x)|^2\,dx\,dt & \lesssim \tfrac{1}{\delta J^2}\int_I \int_{|x-x(t)|\leq \frac{(8J)^{1/c}}{n(t)}} |u(t,x)|^2\,dx \\
& \quad + (\eta+\tfrac{1}{c(\eta)\log K})K.
\end{aligned}
\end{equation}
Of course, at this point we must \emph{still} contend with a localized mass term on the right-hand side.  The point is that by passing through the interaction Morawetz estimate one time, we have gained an additional factor of $J^{-1}$ in this term.  Thus it is now sufficient to apply the lossy estimate \eqref{lemma5.1} to this local mass term, as the logarithmic loss in this estimate will cancel only \emph{one} of the factors of $J^{-1}$.  In particular, we have
\[
 \tfrac{1}{J^2}\int_I \int_{|x-x(t)|\leq \frac{(8J)^{1/c}}{n(t)}}|u(t,x)|^2\,dx \lesssim \tfrac{K}{\log K},
\]
so that from \eqref{dodson-final2} one finally arrives at the estimate 
\begin{equation}\label{dodson-final3}
\delta^2 \int_I \int_{|x-x(t)|\leq \frac{e^{cJ}}{n(t)}}|u(t,x)|^2\,dx\,dt\lesssim (\eta+\tfrac{1}{c(\eta)\log K})K.
\end{equation}
However, as one can use almost periodicity to show that
\[
\int_I \int_{|x-x(t)|\leq \frac{C}{N(t)}}|u(t,x)|^2\,dx\,dt \gtrsim \int_I N(t)^{-2}\,dt =K,
\]
the inequality \eqref{dodson-final3} leads a contradiction provided $K$ is sufficiently large.  This completes the proof.  For the complete analysis, we refer the reader again to \cite{dodson1}.

\section{Modulation analysis}\label{S:modulation}

In this section, we collect some results related to modulation analysis for \eqref{1.1}.  This refers to a description of solutions at times when they are near the orbit of the ground state, as measured by the functional
\[
\delta(f):=\|W\|_{\dot{H}^1}^2-\|f\|_{\dot{H}_x^1}^2.
\]
The description involves several modulation parameters, including a phase, scale, and spatial center.

We can import much of what we need from \cite{duyckaerts1}, with minor modifications taken to address the spatial center (which only appears in the non-radial setting).  In what follows, orthogonality is taken with respect to the $\dot H^1$ inner product. 

Given $v\in\dot{H}^1(\R^4)$, we define
\begin{equation*}
v_{\left[\theta_0,\lambda_0,x_0\right]}(x):=e^{i\theta_0} \tfrac{1}{\lambda_0}v(\tfrac{x-x_0}{\lambda_0})
\end{equation*}

Adapting the arguments used to prove \cite[Lemma~3.6]{duyckaerts1}, we can obtain the following lemma.  The basic idea is that the variational characterization of $W$ provides an initial decomposition around the ground state, which can then be modified slightly to impose some desired orthogonality conditions (more on this below).  This final step relies on the implicit function theorem, which in turn relies on direct calculation to verify the nondegeneracy of the appropriate partial derivatives.

\begin{lemma}\label{Mod1} There exists $\delta_0>0$ for any $f\in\dot H^1$ with $E(f)=E(W)$ and $|\delta(f)|<\delta_0$, there exist unique $(\theta,\lambda,\tilde x)\in\mathbb{R}/2\pi\mathbb{Z}\times(0,\infty)\times\R^4$ such that
\[
f_{[\theta,\lambda,\tilde{x}]}  \perp iW,W_1,\partial_j W, \quad j\in\{1,2,3,4\},
\]
where 
\[
W_1:=W+x\cdot \nabla W.
\]
Moreover, the mapping $f \mapsto (\theta,\lambda,\tilde{x})$ is $C^1$.\end{lemma}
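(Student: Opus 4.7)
The plan is to combine a variational coarse decomposition of $f$ near the $G$-orbit of $W$ with a fine-tuning argument based on the implicit function theorem.

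For the coarse step, the Pohozaev identities $E(W)=\tfrac14\|W\|_{\dot H^1}^2=\tfrac14\|W\|_{L^4}^4$ together with $E(f)=E(W)$ give $\|f\|_{L^4}^4=2\|f\|_{\dot H^1}^2-\|W\|_{\dot H^1}^2$. Hence $|\delta(f)|\to 0$ forces $\|f\|_{L^4}^4/\|f\|_{\dot H^1}^4\to C_4^4$, so $f$ is an asymptotic extremizer of \eqref{SobolevIn}. Applying Lemma~\ref{L:cc} to a sequence $f_n$ with $\delta(f_n)\to 0$ and using the decoupling identities \eqref{profile4}--\eqref{profile5} combined with the rigidity of equality in \eqref{SobolevIn}, one sees that only one profile survives and must coincide with $W$ modulo $G$. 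Consequently, for any prescribed $\eps>0$, choosing $\delta_0$ small enough yields parameters $(\theta_1,\lambda_1,x_1)$ with $\|f_{[\theta_1,\lambda_1,x_1]}-W\|_{\dot H^1}<\eps$.

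For the fine-tuning step, I would define $F:\dot H^1\times\R\times(0,\infty)\times\R^4\to\R^6$ by
\[
F(g,\theta,\lambda,\tilde x)=\bigl((g_{[\theta,\lambda,\tilde x]},iW)_{\dot H^1},\,(g_{[\theta,\lambda,\tilde x]},W_1)_{\dot H^1},\,(g_{[\theta,\lambda,\tilde x]},\partial_j W)_{\dot H^1}\bigr)_{j=1,\ldots,4}.
\]
Direct calculation using \eqref{equ:CP}, the Pohozaev identities, and the fact that $W$ is real-valued and radial gives $F(W,0,1,0)=0$. Differentiating $W_{[\theta,\lambda,\tilde x]}$ in the parameters at $(0,1,0)$ produces $iW$, $-W_1$, and $-\partial_j W$ respectively, so the $6\times 6$ Jacobian at $(W,0,1,0)$ has entries given by the pairwise $\dot H^1$-inner products of $\{iW,W_1,\partial_1W,\ldots,\partial_4W\}$. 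All off-diagonal entries vanish: $(iW,W_1)_{\dot H^1}=(iW,\partial_j W)_{\dot H^1}=0$ since $W_1$ and $\partial_j W$ are real-valued, $(W_1,\partial_j W)_{\dot H^1}=0$ since $W_1$ is even and $\partial_j W$ is odd in $x$, and $(\partial_j W,\partial_k W)_{\dot H^1}\propto\delta_{jk}$ by radiality of $W$. The diagonal entries are all nonzero: $\|iW\|_{\dot H^1}^2=\|W\|_{\dot H^1}^2$, while $\|W_1\|_{\dot H^1}^2>0$ because $W_1$ is the (nontrivial) generator of the scaling symmetry $g\mapsto g_{[0,\lambda,0]}$, and $\|\partial_j W\|_{\dot H^1}^2>0$. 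The Jacobian is therefore invertible, and the IFT provides a neighborhood $\mathcal U\ni W$ in $\dot H^1$ together with a unique $C^1$ map $\Phi:\mathcal U\to\R\times(0,\infty)\times\R^4$ satisfying $F(g,\Phi(g))=0$ and $\Phi(W)=(0,1,0)$.

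To conclude, I would shrink $\delta_0$ so that the coarse modulation $f_{[\theta_1,\lambda_1,x_1]}$ from Step~1 lies in $\mathcal U$; composing with $\Phi(f_{[\theta_1,\lambda_1,x_1]})$ via the group law then yields the desired $(\theta,\lambda,\tilde x)$, with $C^1$-regularity inherited from $\Phi$. Uniqueness follows from the local uniqueness provided by the IFT together with the injectivity of $(\theta,\lambda,\tilde x)\mapsto g_{[\theta,\lambda,\tilde x]}$ on nonzero functions, shrinking $\delta_0$ if necessary to rule out spurious parameters. The main obstacle is Step~1: the radial setting of \cite{duyckaerts1} involves no translation parameter and can be handled by an elementary variational argument, whereas here one must run a full profile decomposition with the $G$-action to handle concentration in $x$. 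The IFT step is then routine modulo the block-diagonal computation of the Jacobian.
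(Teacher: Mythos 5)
Your proposal is correct and follows essentially the same route as the paper, which likewise sketches a two-step argument (a coarse decomposition near the orbit of $W$ via the variational characterization of the sharp Sobolev inequality, followed by an implicit function theorem fine-tuning whose nondegeneracy is checked by the block-diagonal Gram matrix of $\{iW,W_1,\partial_1W,\dots,\partial_4W\}$), adapting \cite[Lemma~3.6]{duyckaerts1} to the non-radial setting. The only caveat, shared with the paper's own sketch, is that the IFT yields uniqueness of $(\theta,\lambda,\tilde x)$ only among parameters for which $f_{[\theta,\lambda,\tilde x]}$ stays near $W$, so the uniqueness assertion should be read locally in that sense.
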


We now apply Lemma~\ref{Mod1} to a solution to \eqref{1.1}.  We suppose $u:I\times\R^4\to\C$ is a solution to \eqref{1.1} satisfying 
\begin{equation}\label{u-subcritical}
E(u)=E(W),\quad\|\nabla u(t)\|_{L^2}<\|\nabla W\|_{L^2},\qtq{and} \delta(u(t))<\delta_0 \qtq{on}I.
\end{equation}
We further define 
\begin{equation}\label{defAorth}
\mathcal{A}:=\{W, iW, W_1, \partial_{1} W, \partial_{2} W, \partial_{3} W, \partial_{4} W\}.
\end{equation}
We choose $(\theta(t),\lambda(t),x(t))$ corresponding to $u(t)$ as in Lemma~\ref{Mod1} and further define $\alpha(t)$ via
\[
1+\alpha(t)=\frac{1}{\|W\|_{\dot{H^1}}^2}(u_{[\theta(t),\lambda(t),x(t)]},W)_{\dot{H}^1}.
\]
We can then write
\begin{equation}\label{ortho}
u_{[\theta(t),\lambda(t),x(t)]}(t)=\bigl[1+\alpha(t)\bigr]W+\tilde{u}(t),\qtq{with} \tilde u(t)\in \mathcal{A}^\perp.
\end{equation}
Finally, we define $v(t)$ by 
\[ 
v(t):=\alpha(t)W+\tilde{u}(t)=u_{[\theta(t),\lambda(t),x(t)]}(t)-W.
\]

We would like to point out that the notation $x(t)$ has previously been used to denote the one of the compactness parameters for almost periodic solutions.  Throughout this section, we always write $x(t)$ to refer to a modulation parameter, and throughout the paper we will always specify whether our notation refers to the modulation parameter or the compactness parameter.

We will need various estimates on the modulation parameters, which we record in the following lemma.

\begin{lemma}\label{modula}
Suppose $u:I\times\R^4\to\C$ is a solution to \eqref{1.1} satisfying \eqref{u-subcritical}. If $\delta_0$ is sufficiently small, then the following estimates hold on $I$:
\begin{align}
|\alpha(t)|\sim \|v(t)\|_{\dot{H}^1(\R^4)}\sim \|\tilde{u}(t)\|_{\dot{H}^1(\R^4)} \sim \delta(u(t)), \label{equivq1} \\
\bigl|{x^{\prime}(t)}-\tfrac{\lambda^{\prime}(t)}{\lambda(t)}x(t)\bigr|+|\alpha^{\prime}(t)|+|\theta^{\prime}(t)|+\bigl|\tfrac{\lambda^{\prime}(t)}{\lambda(t)}\bigr|\lesssim\lambda(t)^2 \delta(u(t)).\label{Mod2}    
\end{align}
\end{lemma}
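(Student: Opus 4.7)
The plan rests on the algebraic structure enforced by the orthogonality and then on a spectral/coercivity input for \eqref{equivq1}, followed by differentiating the orthogonality conditions in time and inverting a (nearly diagonal) linear system for \eqref{Mod2}.

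For \eqref{equivq1}, the first step is to observe that $W\perp\{iW,W_1,\partial_jW\}$ in $\dot H^1$: reality gives the first; $\dot H^1$-invariance of the critical rescaling $W\mapsto\lambda^{-1}W(\cdot/\lambda)$ at $\lambda=1$ gives the second; translation invariance of $\|\cdot\|_{\dot H^1}$ gives the third. Combined with the definition of $\alpha$ (which forces $\tilde u\perp W$), this yields $\tilde u\in\mathcal{A}^\perp$, so
\[
\|v\|_{\dot H^1}^2=\alpha^2\|W\|_{\dot H^1}^2+\|\tilde u\|_{\dot H^1}^2\qtq{and}\delta(u)=-2\alpha\|W\|_{\dot H^1}^2-\alpha^2\|W\|_{\dot H^1}^2-\|\tilde u\|_{\dot H^1}^2.
\]
These two identities immediately give $\|v\|_{\dot H^1}\sim|\alpha|+\|\tilde u\|_{\dot H^1}$ and reduce everything to showing $\|\tilde u\|_{\dot H^1}\lesssim|\alpha|$. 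For this I would Taylor expand the energy: since $E'(W)=0$ and $E(u_{[\theta,\lambda,x]})=E(W)$, one obtains $(\mathcal{L}v,v)=O(\|v\|_{\dot H^1}^3)$. Using $\mathcal{L}W=-2W^3$ on the real direction together with $(\tilde u,W^3)_{L^2}=(\tilde u,-\Delta W)_{L^2}=(\tilde u,W)_{\dot H^1}=0$ to kill the cross term, we arrive at $(\mathcal{L}\tilde u,\tilde u)=2\alpha^2\|W\|_{L^4}^4+O(\|v\|_{\dot H^1}^3)$. Invoking the coercivity $(\mathcal{L}\tilde u,\tilde u)\gtrsim\|\tilde u\|_{\dot H^1}^2$ on $\mathcal{A}^\perp$---the non-radial analogue of the coercivity used in \cite{duyckaerts1}---closes the argument provided $\delta_0$ is small enough.

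For \eqref{Mod2} the plan is to differentiate each of the seven orthogonality conditions $(\tilde u(t),g)_{\dot H^1}=0$, $g\in\mathcal{A}$, in time. Using \eqref{1.1} and the chain rule, a direct computation shows that for $w:=u_{[\theta,\lambda,x]}$,
\[
\partial_t w=i\lambda^2\bigl[\Delta w+|w|^2w\bigr]+i\theta'w-\tfrac{\lambda'}{\lambda}\bigl[w+y\cdot\nabla w\bigr]-\bigl(x'-\tfrac{\lambda'}{\lambda}x\bigr)\cdot\nabla w,
\]
which explains why the combination $x'-(\lambda'/\lambda)x$ appearing in \eqref{Mod2} emerges naturally (it is precisely the coefficient of $\nabla w$ after the scaling term is normalized). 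Since $\partial_t\tilde u=\partial_t w-\alpha'W\in\mathcal{A}^\perp$, pairing with each $g\in\mathcal{A}$ produces a linear system in the seven unknowns $\alpha',\theta',\lambda'/\lambda$, and the four components of $x'-(\lambda'/\lambda)x$. At leading order (replacing $w$ by $W$) the coefficient matrix is the Gram matrix of $\mathcal{A}$ in $\dot H^1$, which a direct parity/scaling check shows is diagonal with strictly positive diagonal entries (for instance $(W_1,\partial_jW)_{\dot H^1}=0$ and $(\partial_jW,\partial_kW)_{\dot H^1}=0$ for $j\ne k$ by radial symmetry of $W$). The $O(\|v\|_{\dot H^1})=O(\delta(u))$ corrections from $w-W=v$ amount to a small perturbation of this invertible diagonal matrix. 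The inhomogeneous term has size $\lambda^2\|\Delta w+|w|^2w\|_{\dot H^{-1}}=O(\lambda^2\|v\|_{\dot H^1})=O(\lambda^2\delta(u))$, since $\Delta W+W^3=0$ by \eqref{equ:CP} makes $\Delta w+|w|^2w$ linear in $v$ to leading order. Inverting the system then yields the four bounds in \eqref{Mod2} simultaneously.

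The main technical obstacle is the coercivity of $\mathcal{L}$ on $\mathcal{A}^\perp$ in the non-radial setting: one must identify the translation modes $\partial_jW$ as additional zero directions of $\mathcal{L}$ alongside $iW$ and $W_1$, and verify that orthogonality to $\{W,iW,W_1,\partial_jW\}$ removes both the full null space and the (single) negative eigendirection of the linearized operator. Everything else is routine parity and Pohozaev bookkeeping together with chain-rule manipulation.
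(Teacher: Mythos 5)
Your proposal is correct and follows essentially the same route as the paper: the Pythagorean identity plus the energy expansion and coercivity of the quadratic form on $\mathcal{A}^\perp$ for \eqref{equivq1}, and pairing the evolution equation for the modulated solution against the elements of $\mathcal{A}$ to invert a nearly-diagonal Gram system for \eqref{Mod2} (the paper merely rescales time by $\lambda^2$ first, where you carry the factor explicitly). One small point: \eqref{equivq1} also requires the reverse bound $|\alpha|\lesssim\|\tilde u\|_{\dot H^1}$ (since the lemma asserts $\|\tilde u\|_{\dot H^1}\sim\|v\|_{\dot H^1}$), not only $\|\tilde u\|_{\dot H^1}\lesssim|\alpha|$; but this follows from the very identity you derive together with the trivial upper bound $(\mathcal{L}\tilde u,\tilde u)\lesssim\|\tilde u\|_{\dot H^1}^2$, exactly as in the paper.
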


The proof of Lemma~\ref{modula} relies on a coercivity result for the quadratic form that arises when one expands the energy around the ground state.  In particular, we note that we may write 
\begin{equation}\label{energyex}
E(W+g)=E(W)+\mathcal{F}(g)+o(\|g\|^2_{\dot{H}^1}) \qtq{for any}g\in\dot H^1,
\end{equation}
where 
\begin{equation}\label{def-quad-form}
\mathcal{F}(g):=\tfrac{1}{2}\int_{\R^4}|\nabla g|^2 \,dx-\tfrac{1}{2}\int_{\R^4}W^2[(3\Re g)^2+(\Im g)^2] \, dx.
\end{equation}
One then has the following lower bound on the orthogonal complement of the set $\mathcal{A}$ defined in \eqref{defAorth}. This estimate was previously established in \cite[Appendix D]{Olivier} (see also \cite[Claim 3.5]{duyckaerts3}), so we omit the proof here.

\begin{lemma}[Coercivity, \cite{Olivier, duyckaerts3}]\label{coer}
There exists constant $c>0$ such that 
\begin{equation}\label{defF}
\mathcal{F}(g)\geq c\|g\|^2_{\dot{H}^1}\qtq{for all}g\in\mathcal{A}^\perp.
\end{equation}
\end{lemma}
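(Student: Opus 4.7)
The plan is to diagonalize $\mathcal{F}$ on the real and imaginary components of $g$, identify the two linearized Schr\"odinger operators that arise, and then use the seven orthogonality conditions in $\mathcal{A}^\perp$ to kill each null and negative direction in turn.

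Writing $g = g_1 + i g_2$ with $g_1, g_2$ real-valued, a direct expansion of the quadratic form produces
\[
\mathcal{F}(g) = \tfrac{1}{2}\langle L_+ g_1, g_1\rangle_{L^2} + \tfrac{1}{2}\langle L_- g_2, g_2\rangle_{L^2},
\]
where $L_+ := -\Delta - 3W^2$ and $L_- := -\Delta - W^2$. The spectral picture I would invoke, which follows from the ground-state equation $\Delta W + W^3 = 0$ together with the variational characterization of $W$, is that $L_-$ is nonnegative with $\ker L_- = \mathrm{span}_{\R}\{W\}$, while $L_+$ has exactly one simple negative eigenvalue $-\lambda_1^2$ with exponentially decaying eigenfunction $\mathcal{Y}$, and $\ker L_+ = \mathrm{span}\{W_1, \partial_1 W, \ldots, \partial_4 W\}$, generated by the scaling and translation invariances of \eqref{equ:CP}. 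The remainder of each spectrum is bounded away from zero, providing a spectral gap on the orthogonal complement of the null and negative directions.

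Next I would convert each $\dot H^1$-orthogonality condition into an $L^2$ condition via $(g, f)_{\dot H^1} = (-\Delta f, g)_{L^2}$. The condition $g \perp iW$ becomes $g_2 \perp W^3 = -\Delta W$ in $L^2$, i.e.\ $g_2$ is $\dot H^1$-orthogonal to $\ker L_-$, and the spectral gap of $L_-$ then gives $\langle L_- g_2, g_2\rangle \gtrsim \|g_2\|_{\dot H^1}^2$. For $g_1$, the orthogonalities to $W_1$ and $\partial_j W$ eliminate the zero-eigenspace of $L_+$. I would then decompose $g_1 = a\mathcal{Y} + r$, with $r$ orthogonal in $L^2$ to $\mathcal{Y}$ and to $\ker L_+$, so that the spectral gap gives $\langle L_+ r, r\rangle \gtrsim \|r\|_{\dot H^1}^2$ while the negative piece contributes $-a^2\lambda_1^2\|\mathcal{Y}\|_{L^2}^2$. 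The last orthogonality $(g_1, W)_{\dot H^1} = 0$, combined with the identity $L_+ W = -2W^3$ (a consequence of the Pohozaev relation $\|\nabla W\|_{L^2}^2 = \|W\|_{L^4}^4$) and the nondegeneracy $(\mathcal{Y}, W)_{L^2}\neq 0$ (a standard property of the lowest eigenfunction of $L_+$), pins down $a$ in terms of a suitable norm of $r$; the negative contribution is then absorbed into the positive spectral gap to deliver $\mathcal{F}(g) \geq c\|g\|_{\dot H^1}^2$.

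The main obstacle is carrying out this final absorption \emph{quantitatively in the $\dot H^1$ topology}. In dimensions $d\geq 5$ one has $W \in L^2$, so the entire argument can be executed in $L^2$ with routine bookkeeping; in $d = 4$, however, $W$ lies only in $\dot H^1 \cap L^{2,\infty}$, and one must control cross terms such as $(\mathcal{Y}, r)_{L^2}$ and $(W, r)_{L^2}$ via Hardy-type inequalities, the explicit polynomial decay of $W$, and the exponential decay of $\mathcal{Y}$. It is precisely this dimension-sensitive bookkeeping that one would import from the cited references rather than reproduce here.
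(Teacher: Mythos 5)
The paper itself supplies no proof of Lemma~\ref{coer}; it defers entirely to Rey's Appendix~D and to Duyckaerts--Merle. At the level of strategy your sketch follows exactly the route of those references: diagonalize $\mathcal{F}(g)=\tfrac12\langle L_+g_1,g_1\rangle+\tfrac12\langle L_-g_2,g_2\rangle$ with $L_+=-\Delta-3W^2$ and $L_-=-\Delta-W^2$, and use the seven conditions defining $\mathcal{A}^\perp$ to remove the null directions and the single negative direction of $L_+$. (Incidentally, your $L_+$ is the correct one, consistent with \eqref{defL} and \eqref{f2.23}; the factor $(3\Re g)^2$ in \eqref{def-quad-form} is a typo for $3(\Re g)^2$.)

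The gap is in the mechanism you invoke for the quantitative lower bound. In $d=4$ the potentials $3W^2,\,W^2\sim|x|^{-4}$ vanish at infinity, so the essential spectrum of both $L_\pm$ is $[0,\infty)$ and $0$ is its \emph{bottom}: the assertion that ``the remainder of each spectrum is bounded away from zero'' is false, and there is no spectral gap to quote. Moreover $W$ and $W_1=W+x\cdot\nabla W$ decay like $|x|^{-2}$ and are not in $L^2(\R^4)$, so the proposed $L^2$-orthogonal decomposition of $g_1$ relative to $\ker L_+$ is not defined, and pairings such as $(W,r)_{L^2}$ need not converge for $r\in\dot H^1$. (Note also that $g\in\mathcal{A}^\perp$ translates into the \emph{weighted} conditions $\int W^2W_1\,g_1=\int W^2\partial_jW\,g_1=0$ and $\int W^3g_1=\int W^3g_2=0$, not plain $L^2$-orthogonality to the kernel.) The correct substitute --- and this is the actual content of the cited references, not bookkeeping of cross terms --- is that $f\mapsto\int W^2f^2$ is weakly sequentially continuous on $\dot H^1$ (equivalently, $(-\Delta)^{-1}(W^2\,\cdot)$ is compact on $\dot H^1$), so coercivity on $\mathcal{A}^\perp$ is proved by contradiction: a normalized sequence with $\mathcal{F}(g_n)\to0$ has a weak limit realizing equality, and the nondegeneracy statements (the null spaces of the forms of $L_-$ and $L_+$ on $\dot H^1$ are exactly $\mathrm{span}\{W\}$ and $\mathrm{span}\{W_1,\partial_1W,\dots,\partial_4W\}$) combined with the orthogonality conditions exclude that limit. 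Your count of negative and null directions is what makes this contradiction work, but without replacing the spectral gap by this compactness argument, the final ``absorption'' step has no positive constant available to absorb into.
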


With Lemma~\ref{coer} in hand, we turn to the proof of Lemma~\ref{modula}.

\begin{proof}[Proof of Lemma~\ref{modula}]  In the radial case (in particular, in the absence of $x(t)$), estimates like \eqref{equivq1} and \eqref{Mod2} are proven in detail in \cite[Section~7]{duyckaerts1}.  We adapt these arguments to the non-radial setting,  which involves the parameter $x(t)$ in \eqref{Mod2}.

We first prove \eqref{equivq1}. Note that $\alpha\in\R$ is chosen to guarantee that $(\tilde{u}, W)_{\dot{H}^1}=0$, so that
 \begin{equation}\label{equal1}
\|v\|_{\dot{H}^1}^2=\alpha^2\|W\|_{\dot{H}^1}^2+\|\tilde{u}\|^2_{\dot{H}^1}.
\end{equation}
Moreover, recalling $\tilde u \in\mathcal{A}^\perp$, Lemma~\ref{coer} implies,
\begin{equation}\label{tildeu}
\mathcal{F}(\tilde{u})\gtrsim\|\tilde{u}\|^2_{\dot{H}^1}.
\end{equation}

Now, using \eqref{energyex} and $E(u)=E(W)$, we have
\begin{equation*}
0 = \mathcal{F}(\alpha W+\tilde{u} )+o(\|v\|^2_{\dot{H}^1}),\qtq{i.e.} \F(\alpha W + \tilde u)=o(\|v\|_{\dot H^1}^2).
\end{equation*}
As $\Delta W=-W^3$ and $\tilde{u}\in\mathcal{A}^{\perp}$, we have
\begin{equation*}
\int_{\R^4}W^3\Re \tilde{u} \, dx=\int_{\R^4}W^3\Im \tilde{u} \, dx=0,
\end{equation*}
so that
\begin{equation*}
-|\mathcal{F}(W)|\alpha^2+\mathcal{F}(\tilde{u})=\mathcal{F}(\alpha W+\tilde{u})=o(\|v\|^2_{\dot{H}^1}).
\end{equation*}
Combining this equality with \eqref{equal1}, \eqref{tildeu}, and the fact that 
\[
\mathcal{F}(\tilde{u})\lesssim \|\tilde{u}\|^2_{\dot{H}^1},
\]
we can derive that
\begin{equation}\label{equal2}
\alpha(t)\sim \|v(t)\|_{\dot{H}^1} \sim\|\tilde{u}(t)\|_{\dot{H}^1}.
\end{equation}
Finally, combining \eqref{equal1}) and \eqref{equal2} we obtain
\begin{align*}
\delta(u(t))&=\bigl|\|W\|^{2}_{\dot{H}^{1}}-\|\bigl[1+\alpha(t)\bigr]W+\tilde{u}(t)\|^{2}_{\dot{H}^{1}}\bigr|\\
&=2|\alpha| \|W\|^{2}_{\dot{H}^{1}}+O(\alpha^{2}),
\end{align*}
which shows $\delta(u(t))\sim |\alpha(t)|$. Putting it all together, we have
\[
\alpha(t)\sim \|v(t)\|_{\dot{H}^{1}}\sim \|\tilde{u}(t)\|_{\dot{H}^{1}}\sim \delta(u(t)).
\]

We turn to the proof of \eqref{Mod2}. We use the notation 
\[
U(t,x):=u_{[\theta(t), \lambda(t), x(t)]}(t,x)=e^{i\theta(t)}\tfrac{1}{\lambda(t)}u\bigl(t, \tfrac{x-x(t)}{\lambda(t)}\bigr).
\]

We perform a change of variables $t=t(s)$ such that 
\[
\tfrac{dt}{ds}=\tfrac{1}{\lambda(t)^2}.
\]
Then to establish \eqref{Mod2}, it suffices to prove that
\begin{equation}\label{changevar}
\bigl|x_s(s)-\tfrac{\lambda_s}{\lambda}(s)x_s(s)\bigr|+|\alpha_s(s)|+|\theta_s(s)|+\bigl|\tfrac{\lambda_s}{\lambda}(s)\bigr|\lesssim\delta(u(s)).    
\end{equation}

By direct computation, we derive
\begin{align*}
\partial_s U(t,x)
&=-\tfrac{\lambda_s}{\lambda} \bigl( U(t,x) + (x - x(t)) \cdot \nabla U(t,x) \bigr) + \tfrac{e^{i\theta}}{\lambda^{3}} u_t\bigl(t, \tfrac{x - x(t)}{\lambda} \bigr) \\
&\quad - x_s \cdot \nabla U(t,x) + i\theta_s U,
\end{align*}
so that
\begin{equation}\label{f2.21}
\begin{aligned}
i\partial_s U+\Delta U 
&= -|U|^{2}U -i\tfrac{\lambda_s}{\lambda}\bigl(U(t,x)+(x-x(t))\cdot \nabla U(t,x)\bigr) \\
&\quad\quad -ix_s\cdot \nabla U(t,x)-\theta_s U.
\end{aligned}
\end{equation}
As above, we decompose $U$ as 
\begin{equation*}
U=[1+\alpha(t)]W+\tilde{u}:=W+v,\qtq{where} \tilde{u}:=g_1+ig_2\in \mathcal{A}^{\perp}.
\end{equation*} 

We can then reformulate \eqref{f2.21} in terms of $v$ as
\begin{equation}\label{f2.23}
\begin{aligned}
\partial_s v & + (\Delta + W^2) \Im v -i(\Delta+3W^2)\Re v\\
& \quad -i\theta_s W +\tfrac{\lambda_s}{\lambda}[W_1-x(s)\cdot \nabla W] +x_s\cdot\nabla W \\
& = - i \theta_s v - R(v) - \tfrac{\lambda_s}{\lambda}\bigl(v+(x-x(s))\cdot \nabla v\bigr) - x_s\cdot\nabla v,
\end{aligned}
\end{equation}
where
\begin{equation*}
R(v):=-i \left|W+v\right|^2(W+v)+iW^2+3iW^2\Re v-W^2\Im v.
 \end{equation*}
In terms of $g_1$ and $g_2$, this becomes 
\begin{equation}\label{f2.24}
\begin{aligned}
\partial_s g_1 & +i\partial_s g_2+\alpha_s W+(\Delta+W^{2})g_2-i(\Delta+3W^{2})g_1-2i\alpha W^{3} \\
&\quad-\theta_s iW+\tfrac{\lambda_s}{\lambda}W_1-\bigl(\tfrac{\lambda_s}{\lambda}x(t)-x_s\bigr)\cdot\nabla W\\
& =-R(v)+i\theta_s v-\tfrac{\lambda_s}{\lambda}\bigl(v+x\cdot \nabla v\bigr)+\bigl(\tfrac{\lambda_s}{\lambda}x(t)-x_s\bigr)\cdot \nabla v.
\end{aligned}
\end{equation}


We now introduce
\begin{equation*}
\epsilon(s):=|\delta(u(s))|\,\bigl\{|\delta(u(s))|+|\theta_s(s)|+\bigl|\tfrac{\lambda_s}{\lambda}(s)\bigr|+\left|\tfrac{\lambda_s}{\lambda}x(t)-x_s\right|\bigr\}
\end{equation*}
and define the constants
\[
c_1:=\|W\|_{\dot{H}^1}^2,\quad \quad c_2:=\|W_1\|_{\dot{H}^1}^2.
\]

We now multiply both sides of \eqref{f2.24} by $\Delta W$, integrate, take the real part, and utilize the orthogonality conditions
\begin{equation}\label{f2.25}
(g_1, W)_{\dot{H}^1}=(W, W_1)_{\dot{H}^1}=(W, \partial_j W)_{\dot{H}^1}=0.    
\end{equation}
This yields
\begin{equation}\label{f2.26}
c_1\alpha_s=-(\Delta g_2, W)_{\dot{H}^1}-(W^{2}g_2, W)_{\dot{H}^1}+O(\epsilon(s)).
\end{equation}
Similarly, we multiply by $\Delta i W$, integrate, and take the imaginary part to obtain
\begin{equation}\label{f2.27}
c_1\theta_s=-(\Delta g_1, W)_{\dot{H}^1}-3(W^{2}g_1, W)_{\dot{H}^1}-2\alpha (W^{3}, W)_{\dot{H}^1}+O(\epsilon(s)).
\end{equation}
Next, multiplying by $\Delta W_1$ taking the imaginary part leads to
\begin{equation}\label{f2.28}
c_2\tfrac{\lambda_s}{\lambda}=-(\Delta g_2, W_1)_{\dot{H}^1}-(W^{2}g_2, W_1)_{\dot{H}^1}+O(\epsilon(s)).
\end{equation}
Finally, we multiply by $\Delta\partial_j W$ and take the real part.  Introducing the notation
\[
\lambda_j:=\|\partial_j W\|_{\dot{H}^1}^2, \quad \quad \beta_j:=\tfrac{\lambda_s}{\lambda}x_j(t)-\tfrac{\partial_s x_j(t)}{\lambda} \qtq{for} j\in\{1,2,3,4\}
\]
and recalling \eqref{f2.25}, we derive
\begin{equation}\label{f2.29}
\lambda_j \beta_j(s)=-((\Delta+W^{2})g_2, \partial_j W)_{\dot{H}^1}+O(\epsilon(s)).
\end{equation}

Combining the estimates \eqref{f2.26}--\eqref{f2.29}, we conclude
\begin{equation}\label{f2.30}
|\alpha_s|+|\theta_s|+\left|\tfrac{\lambda_s}{\lambda}\right|+\sum_j|\beta_j(s)|\lesssim \|g\|_{\dot{H}^1}+O(\epsilon(s))\lesssim\delta(u)+O(\epsilon(s)).
\end{equation}
By choosing $\delta_0$ sufficiently small, we arrive at
\begin{equation}\label{f2.31}
|\alpha_s|+|\theta_s|+\bigl|\tfrac{\lambda_s}{\lambda}\bigr|+\sum_j|\beta_j(s)|\lesssim \delta(u),
\end{equation}
which implies the desired estimate \eqref{changevar}.
\end{proof}

\section{Proof of a rigidity result}

We next prove a rigidity result, which asserts that if a subcritical threshold solution blows up and stays very close to the orbit of the ground state in one time direction, it must coincide exactly with the heteroclinic orbit $W^-$ (modulo symmetries). 

\begin{theorem}\label{t2.3} Suppose $u$ is a solution to \eqref{1.1} satisfying $E(u)=E(W)$ and $\|\nabla u(0)\|_{L^2}<\|\nabla W||_{L^2}$.  In particular, by Proposition~\ref{T:no ftb}, $u$ is global.

Suppose that $u$ blows up forward in time.  There exists $\eta_*>0$ sufficiently small such that the following holds:  If
\[
\sup_{t\in[0,\infty)} \delta(u(t)) \leq \eta_*,
\]
then there exist $\lambda_0>0$, $\theta_0\in\R$, $x_0\in\R^4$, and $T\in\R$ such that 
\begin{equation}\nonumber
u(t,x)=\lambda_0 e^{i\theta_0}W^{-}(\lambda_0^2(t-T), \lambda_0 (x+x_0)).
\end{equation}
\end{theorem}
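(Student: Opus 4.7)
The plan is to follow the Duyckaerts--Merle framework from \cite{duyckaerts1}, reducing the theorem to the rigidity statement Proposition~\ref{2WW} by proving that both $\delta(u(t))$ and the full set of modulation parameters converge \emph{exponentially} to their limits. The main new feature compared to the radial setting is that the translation parameter $x(t)$ produced by Lemma~\ref{Mod1} must be tracked carefully. Choosing $\eta_*<\delta_0$, Lemma~\ref{Mod1} gives $C^1$ modulation parameters $(\theta(t),\lambda(t),x(t))$ on $[0,\infty)$ and the decomposition $u_{[\theta(t),\lambda(t),x(t)]}=(1+\alpha(t))W+\tilde u(t)$ with $\tilde u(t)\in\mathcal{A}^\perp$ and $|\alpha(t)|\sim\|\tilde u(t)\|_{\dot H^1}\sim\delta(u(t))$ by Lemma~\ref{modula}. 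Since $u$ blows up forward in time, Proposition~\ref{sequence} furnishes a sequence $t_n\to\infty$ with $\delta(u(t_n))\to 0$.

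The technical heart of the argument is a \emph{modulated localized virial identity}. I would work with a functional of the form
\[
V(t)=\lambda(t)\,\Im\int\varphi\bigl(\tfrac{x-x(t)}{R\lambda(t)}\bigr)(x-x(t))\cdot\nabla u(t,x)\,\overline{u(t,x)}\,dx,
\]
for a suitable radial cutoff $\varphi$ and $R$ large. Direct computation of $\tfrac{d}{dt}V$, combined with the expansion $U:=u_{[\theta,\lambda,x]}=W+v$ and equation \eqref{f2.23}, produces a quadratic form in $v$ of essentially the same shape as $\F$ in \eqref{def-quad-form}, together with error terms controlled by the modulation-parameter derivatives from \eqref{Mod2}. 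Applying the coercivity Lemma~\ref{coer} (using $\tilde u\in\mathcal A^\perp$) and Lemma~\ref{modula} to absorb errors into $\delta(u(t))$ yields a lower bound schematically of the form
\[
\tfrac{d}{dt}V(t)\ge c\,\lambda(t)^2\,\delta(u(t))-(\text{truncation/drift errors}).
\]
Here the cutoff must be centered at $x(t)$ rather than at the origin, since the solution concentrates around the modulation center; the truncation errors are then absorbed using the almost-periodicity afforded by Lemma~\ref{alcompact}.

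With this virial inequality in hand, I would argue in two passes. A first integration between consecutive times $t_n$, combined with the uniform bound $|V(t)|\lesssim R^2\lambda(t)\,\delta(u(t))^{1/2}$ and the vanishing of $\delta(u(t_n))$, shows that on each interval $[t_n,t_{n+1}]$ the quantity $\int\lambda(t)^2\delta(u(t))\,dt$ is small. Feeding this into the parameter derivative bounds \eqref{Mod2} gives that $\theta(t)$, $\log\lambda(t)$, and $\lambda(t)^{-1}x(t)$ are Cauchy, hence converge as $t\to\infty$; in particular $\lambda(t)\sim\lambda_\infty$ is bounded above and below and the residual drift is uniformly small. A second application of the virial inequality together with these facts then yields a differential inequality $f'(t)\ge c\,f(t)$ for $f(t):=\int_t^\infty\delta(u(s))\,ds$, from which one derives the exponential decay
\[
\delta(u(t))+|\log\lambda(t)-\log\lambda_\infty|+|\theta(t)-\theta_\infty|+|x(t)-x_\infty|\le Ce^{-ct},\qquad t\ge 0.
\]

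With exponential convergence of $u$ to the orbit of $W$ established, the final step is to invoke the rigidity result Proposition~\ref{2WW}: any subcritical threshold solution that converges exponentially fast to the ground state modulo symmetries is precisely $W^-$ modulo symmetries. Undoing the limiting symmetries $(\theta_\infty,\lambda_\infty,x_\infty)$ then produces the explicit form stated in Theorem~\ref{t2.3}. The step I expect to be the main obstacle is the bookkeeping for the modulated virial in the non-radial case: choosing the spatial cutoff compatibly with the moving center $x(t)$, handling the commutators that arise when differentiating this moving weight in time, and verifying that $x(t)$ cannot drift so as to spoil the almost-periodicity bound on the truncation errors. Once these points are settled, the coercivity content of Lemma~\ref{coer} and the general rigidity machinery of \cite{duyckaerts1} carry the rest.
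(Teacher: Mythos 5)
Your overall strategy---modulation analysis, a modulated localized virial estimate, a first pass showing the parameters are nearly constant, a second pass giving exponential decay of $\delta(u(t))$, and then Proposition~\ref{2WW}---is exactly the route the paper takes (Propositions~\ref{bootstrap} and~\ref{exp}). However, two quantitative points in your plan, as written, would prevent the argument from closing. The first is the boundary bound on the virial functional: you claim $|V(t)|\lesssim R^2\lambda(t)\,\delta(u(t))^{1/2}$. With only a square-root bound, the second pass gives $\int_t^\infty\delta(u(s))\,ds\lesssim\delta(u(t))^{1/2}$, i.e. $-f'\gtrsim f^2$ for $f(t)=\int_t^\infty\delta(u(s))\,ds$, which yields only the polynomial rate $f(t)\lesssim t^{-1}$; Proposition~\ref{2WW} genuinely requires the exponential rate. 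What saves the argument is the \emph{linear} bound $|M_R(u(t))|\lesssim R^2\,\delta(u(t))$: since $W$ is real-valued, the virial functional vanishes identically on the modulated ground state $\tilde W(t)$, so $M_R(u)=M_R(u)-M_R(\tilde W)$ is controlled by $\|v(t)\|_{\dot H^1}$, and at the threshold Lemma~\ref{modula} gives $\|v(t)\|_{\dot H^1}\sim\delta(u(t))$ to the \emph{first} power---a fact you quote correctly but then fail to exploit. With the linear bound one gets $\int_t^\infty\delta(u(s))\,ds\lesssim\delta(u(t))$ and Gronwall yields exponential decay. (Your inequality $f'\ge cf$ also has the wrong sign; it should read $f'\le -cf$.)

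The second issue is the treatment of the truncation errors, which you propose to absorb ``using almost periodicity.'' Almost periodicity only makes these errors small in an absolute sense, which is useless once $\delta(u(t))$ is itself small: the lower bound $\tfrac{d}{dt}M_R\ge 8\,\delta(u(t))-(\text{errors})$ is meaningful only if the errors are at most a small multiple of $\delta(u(t))$ itself. The correct mechanism is again comparison with the ground-state orbit: $F_R(\tilde W(t))\equiv 0$ by the Pohozaev identities, so each truncation term equals its difference with the corresponding term for $\tilde W$, and is bounded by the tail of $W$ outside $\{|x-x(t)|\gtrsim R\lambda(t)\}$ times $\|v(t)\|_{\dot H^1}$, hence by $(\eta+\eta_*)\,\delta(u(t))$. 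This same device lets the paper dispense with your moving, $\lambda(t)$-dependent weight entirely: a bootstrap first shows $\lambda(t)\sim\lambda(0)$ and $|x(t)|\le\tfrac12$, after which a \emph{fixed} cutoff of radius $R\sim C\lambda_{\min}^{-1}$ centered at the origin suffices, with no commutator terms from differentiating a moving cutoff. Finally, note that the sequence $t_n\to+\infty$ with $\delta(u(t_n))\to0$ need not be imported from Proposition~\ref{sequence} (whose sequence is not a priori forward in time); it falls out of the first virial pass, which shows $\delta(u(\cdot))\in L^1([0,\infty))$.
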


Throughout the rest of this section, we suppose that $u$ is a solution satisfying the assumptions of Theorem~\ref{t2.3} with some $\eta_*>0$ (which will be taken sufficiently small below). In particular, by Lemma~\ref{modula} there exist continuous functions
\begin{equation*}
\theta : [0,\infty)\rightarrow \R,\quad x:  [0,\infty)\rightarrow \mathbb{R}^4 ,\quad \lambda: [0,\infty)\rightarrow \mathbb{R}_+, \quad \alpha: [0,\infty)\rightarrow \mathbb{R}_+
\end{equation*}
such that if 
\begin{equation}\label{decompz}
v(t):=\alpha(t)W+\tilde{u}(t)=u_{[\theta(t),\lambda(t),x(t)]}(t)-W,
\end{equation}
then
\begin{align}
&|\alpha(t)|\sim \|v(t)\|_{\dot{H}^1}\sim \|\tilde{u}(t)\|_{\dot{H}^1}\sim \delta(u(t)),\label{mo1}\\
&\bigl|{x^{\prime}(t)}-\tfrac{\lambda^{\prime}(t)}{\lambda(t)}x(t)\bigr|+|\alpha^{\prime}(t)|+|\theta^{\prime}(t)|+\bigl|\tfrac{\lambda^{\prime}(t)}{\lambda(t)}\bigr|\leq C\lambda(t)^2 \delta(u(t)). \label{mo2}
\end{align}

By applying a fixed translation, we may assume
\[
x(0)=0.
\]

We now run a bootstrap argument based on a localized virial estimate (incorporating the modulation analysis) to prove that the scale function and translation parameters are nearly constant.

\begin{proposition}\label{bootstrap} The following holds provided $\eta_*$ is sufficiently small.  On any interval $[0,T]$ such that
\[
\max_{t\in[0, T]}\lambda(t)\leq2\min_{t\in[0, T]}\lambda(t)\qtq{and} \max_{t\in[0, T]}|{x(t)}|\leq 1,
\]
we have
\begin{equation}
\max_{t\in[0, T]}\lambda(t)\leq\tfrac{3}{2}\min_{t\in[0, T]}\lambda(t),\quad  \max\limits_{t\in[0, T]}|{x(t)}|\leq \tfrac{1}{2}.
\end{equation}

Consequently,
\begin{equation}\label{bootse1}
\max_{t\in[0, \infty)}\lambda(t)\leq\tfrac{3}{2}\min_{t\in[0, \infty)}\lambda(t),\quad  \max_{t\in[0, \infty)}\left|{x(t)}\right|\leq \tfrac{1}{2}.
\end{equation}
\end{proposition}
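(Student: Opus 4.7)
The plan is a continuity/bootstrap argument driven by a modulated localized virial identity, in the spirit of Section~6 of \cite{duyckaerts1}. Let $\mathcal{I}$ denote the set of $T\geq 0$ on which the hypothesis $\max_{[0,T]}\lambda\leq 2\min_{[0,T]}\lambda$ and $\max_{[0,T]}|x|\leq 1$ holds; by continuity of $\lambda$ and $x$ (Lemma~\ref{modula}) it is closed, and it contains a neighborhood of $0$ since $\lambda(0)>0$ and $x(0)=0$. It therefore suffices to prove that on any such $T\in\mathcal{I}$ the \emph{strictly} improved bounds $\max\lambda\leq(3/2)\min\lambda$ and $\max|x|\leq 1/2$ hold on $[0,T]$. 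In fact I will obtain improvements of size $O(\eta_*^{1/2})$, so fixing $\eta_*$ small enough closes the bootstrap and proves \eqref{bootse1}.

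The main tool is a modulated localized virial quantity. I pass to the rescaled time $s$ with $ds/dt=\lambda(t)^2$ from Lemma~\ref{modula}, so that \eqref{changevar} yields $|\theta_s|+|\lambda_s/\lambda|+\sum_j|\beta_j|\lesssim \delta(u(s))$, and use the decomposition $U(s)=W+v(s)$ of \eqref{decompz}. For a fixed smooth radial cutoff $\chi_R$ equal to $1$ on $\{|y|\leq R\}$ and supported in $\{|y|\leq 2R\}$, define
\[
V(s):=\Im\int_{\R^4}\chi_R(y)\,y\cdot\nabla v(s,y)\,\overline{v(s,y)}\,dy.
\]
By H\"older on $\mathrm{supp}\,\chi_R$ and Sobolev embedding in four dimensions,
\[
|V(s)|\lesssim R\,\|v(s)\|_{L^2(|y|\leq 2R)}\,\|\nabla v(s)\|_{L^2}\lesssim R^2\,\|\nabla v(s)\|_{L^2}^2\lesssim R^2\,\delta(u(s))\leq R^2\,\eta_*.
\]
A direct computation using \eqref{f2.23}, together with the Pohozaev identities for $W$ (to absorb the cubic self-interactions) and the coercivity of Lemma~\ref{coer} applied to the $\mathcal{A}^\perp$-component $\tilde u$, yields
\[
\tfrac{dV}{ds}=c\,\delta(u(s))+\mathrm{Err}_\chi(s)+\mathrm{Err}_{\mathrm{mod}}(s),\qquad c>0,
\]
where $\mathrm{Err}_\chi$, supported in $\{R\leq|y|\leq 2R\}$, is $o_R(\delta(u(s)))$ by the decay of $W$ and Sobolev for $v$ on the tails, while $\mathrm{Err}_{\mathrm{mod}}$ collects the modulation terms on the right-hand side of \eqref{f2.23} and is bounded by $O(\delta(u)^{3/2})=O(\eta_*^{1/2}\,\delta(u))$ via \eqref{changevar} and \eqref{mo1}. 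Fixing $R$ large and then $\eta_*$ small gives $dV/ds\geq (c/2)\,\delta(u(s))$.

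Integrating this lower bound over $[0,s(T)]$ and using the pointwise bound on $|V|$,
\[
\int_0^{s(T)}\delta(u(\sigma))\,d\sigma\lesssim R^2\,\eta_*.
\]
Combined with \eqref{changevar} in the form $|(\log\lambda)_s|+|\beta_j(s)|\lesssim \delta(u(s))$, integration in $s$ and conversion to $t$-time (the factor $\lambda^2$ is bounded above and below on $[0,T]$ by the bootstrap hypothesis), together with $x(0)=0$, yields
\[
\bigl|\log(\lambda(t)/\lambda(0))\bigr|+|x(t)|\lesssim R^2\,\eta_*\qquad\text{for all }t\in[0,T],
\]
which is strictly less than $\log(3/2)$ and $1/2$ once $\eta_*$ is taken small enough, closing the bootstrap. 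The main obstacle is the virial computation in the non-radial setting: the translation modulation $x(t)$ enters \eqref{f2.21} through the terms $-ix_s\cdot\nabla U$ and $-i(\lambda_s/\lambda)(y-x/\lambda)\cdot\nabla U$, producing apparent contributions to $dV/ds$ of order $O(\delta(u)^{1/2})$ that are tamed only by exploiting the orthogonality of $\tilde u$ to $\partial_j W$ and $W_1$ (from Lemma~\ref{Mod1}) to perform an integration by parts. Combined with $W\notin L^2(\R^4)$ (which forces the cutoff $\chi_R$ and careful tail estimates), this is precisely where the present proof diverges from \cite{duyckaerts1}.
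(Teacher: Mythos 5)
Your overall architecture (continuity/bootstrap, integrating $|(\log\lambda)'|$ and $|x'|$ against $\int\delta(u)\,dt$, closing by taking $\eta_*$ small) matches the paper's. But the central step --- the claimed identity $\tfrac{dV}{ds}=c\,\delta(u(s))+\mathrm{Err}$ with $c>0$ for your quantity $V(s)=\Im\int\chi_R\,y\cdot\nabla v\,\bar v\,dy$ --- cannot hold, and this is a genuine gap. Your $V$ is \emph{quadratic} in $v$, whereas $\delta(u)\sim|\alpha|\sim\|v\|_{\dot H^1}$ is \emph{first order} in $v$ (to leading order $\delta(u)=-2\alpha\|W\|_{\dot H^1}^2+O(\alpha^2)$). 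Every contribution to $\tfrac{dV}{ds}$ from \eqref{f2.23} is at least quadratic: the linearized part pairs $v$ against $v$; the modulation terms carry coefficients of size $O(\delta)$ by \eqref{changevar} and then pair against $\bar v=O(\delta)$; and $R(v)$ is itself $O(\|v\|^2)$. Pohozaev and the coercivity of Lemma~\ref{coer} can only produce bounds of size $O(\|v\|_{\dot H^1}^2)=O(\delta(u)^2)$ (and not even sign-definite ones, since $v=\alpha W+\tilde u$ has a component along $W$ where $\mathcal F$ is negative). A bound $\tfrac{dV}{ds}\gtrsim\delta^2$ would not rescue the argument either: it yields $\int_0^{s(T)}\delta^2\,ds\lesssim R^2\eta_*^2$, which does not control $\int\delta\,ds$ uniformly in $T$, so the bootstrap does not close.

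The missing idea is that the virial must be applied to $u$ itself (equivalently to $W+v$, keeping the cross terms linear in $v$), and the first-order coercive term comes from \emph{energy conservation}, not from the coercivity of $\mathcal F$. Concretely, with $M_R(u)=2\Im\int\nabla\varphi_R\cdot\nabla u\,\bar u\,dx$ one computes
\begin{equation*}
\tfrac{d}{dt}M_R(u(t))=8\textstyle\int|\nabla u|^2-8\int|u|^4+F_R(u)=8\,\delta(u(t))+F_R(u),
\end{equation*}
where the identity $\int|\nabla u|^2-\int|u|^4=\delta(u)$ uses $E(u)=E(W)=\tfrac14\|W\|_{\dot H^1}^2$. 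The errors $F_R(u)$ and the upper bound on $|M_R(u)|$ are then controlled by subtracting the same expressions evaluated at the modulated ground state $\tilde W(t)$ (which vanish identically since $W$ is real), choosing $R=C\lambda_{\min}^{-1}$ so that $\{|x-x(t)|\geq R\lambda(t)\}\subset\{|x|\geq C/2\}$ makes the $W$-tails small; this is where the bootstrap hypotheses $|x(t)|\leq 1$ and $\lambda_{\max}\leq2\lambda_{\min}$ enter. Your diagnosis of where the non-radial case diverges from \cite{duyckaerts1} (the terms $x_s\cdot\nabla U$) is reasonable, but in the paper's formulation these are handled through the bound \eqref{mo2} on $|x'-\tfrac{\lambda'}{\lambda}x|$ at the integration stage, not inside the virial identity.
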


\begin{proof} Let us introduce the notation
\[
\lambda_{\min}=\min_{t\in[0,T]}\lambda(t) \qtq{and} \lambda_{\max} = \max_{t\in[0,T]}\lambda(t). 
\]

We use a virial estimate that takes the modulation analysis into account. Let $\varphi$ be a radial function satisfying
 \begin{equation}\label{start1}
\varphi(x) = \left\{ 
\begin{aligned}
& |x|^2, \qquad & | x | \le 1, \\
& C_0, \qquad & | x | \ge 2.  
\end{aligned}
\quad\mbox{ with } \quad \left|\partial^{\alpha}\varphi(x)\right|\lesssim_{\alpha} |x|^{2-|\alpha|}\right. 
\end{equation}
for all multiindices $\alpha$. We also set
\[
\varphi_{R}(x)=R^2\varphi(\tfrac{x}{R}).
\]
We then define the Morawetz potential
\[
M_{R}(u(t))=2\Im \int_{\R^4} \nabla \varphi_R(x) \cdot\nabla u(t,x)\overline{u(t,x)}\, dx.
\]

We will first show that for any $R\geq 1$, we have
\begin{equation}\label{step1}
|M_R(u(t))| \lesssim R^2\delta(u(t)).
\end{equation}
We will then show that if we take $R$ to be
\[
R=\max_{t\in[0,T]} \tfrac{C}{\lambda(t)} = C\lambda_{\min}^{-1}
\]
for some large (universal) $C\gg1$, then we can obtain the lower bound
\begin{equation}\label{step2}
\tfrac{d}{dt}M_R(u(t)) \geq \delta(u(t)).
\end{equation}

Combining \eqref{step1} and \eqref{step2} and applying the Fundamental Theorem of Calculus then yields
\[
\int \delta(u(t)) \,dt \lesssim \lambda_{\min}^{-2}[\delta(u(0))+\delta(u(T))]\lesssim \eta_*\lambda_{\min}^{-2} .
\]
Applying the bound on $|\tfrac{\lambda'(t)}{\lambda(t)}|$ in \eqref{mo2} and the bootstrap assumption, this yields
\[
\int_0^T \bigl|\tfrac{d}{dt}\log \lambda(t)\bigr|\,dt \lesssim \eta_* \lambda_{\min}^{-2}\lambda_{\max}^{2} \lesssim \eta_*. 
\]
Choosing $\eta_*$ sufficiently small (depending only on universal constants), this implies $\lambda_{\max}\leq\tfrac32\lambda_{\min}$. 

Similarly, applying the upper bound on $|x'-\tfrac{\lambda'}{\lambda} x|$ and the bootstrap assumptions, we obtain
\begin{align*}
\int_0^T |x'(t)|\,dt & \leq \int_0^T |x'(t)-\tfrac{\lambda'(t)}{\lambda(t)}x(t)|\,dt + \int_0^T |\tfrac{\lambda'(t)}{\lambda(t)}|\, |x(t)|\,dt  \\
& \lesssim \lambda_{\max}^2 \lambda_{\min}^{-2} \eta_* \lesssim \eta_*,
\end{align*}
which (choosing $\eta_*$ sufficiently small) implies $\max_{t\in[0,T]}|x(t)| \leq \tfrac12$. 

Thus it remains to prove \eqref{step1} and \eqref{step2}.  In what follows we use the notation
\[
\tilde W(t):=\lambda(t) e^{-i\theta(t)}W(\lambda(t)x+x(t)).
\]

We first prove \eqref{step1}.  First, recalling that $W$ is real-valued, we note that
\[
M_R(\tilde W(t))\equiv 0.
\]
We will also use the fact that
\[
\| u\|_{L_t^\infty \dot H_x^1} \leq \|W\|_{\dot H_x^1}.
\]
Using H\"older's inequality and Sobolev embedding, we estimate
\begin{align*}
|M_R(u(t))| & = |M_R(u(t)) - M_R(\tilde W(t))| \\
& \lesssim R\int_{|x|\lesssim R} [|\nabla u| |u| - |\nabla \tilde W| |\tilde W|]\,dx \\
& \lesssim R^2 \{\|W\|_{\dot H^1}+\| u\|_{L_t^\infty \dot H_x^1}\}\{\|u-\tilde W\|_{L_t^\infty \dot H_x^1} + \|u-\tilde W\|_{L_t^\infty L_x^4}\} \\
& \lesssim R^2 \|u-\tilde W\|_{L_t^\infty \dot H_x^1}\lesssim R^2 \delta(u(t)). 
\end{align*}

We turn to the proof of \eqref{step2}. A tedious but straightforward calculation using \eqref{1.1} and integration by parts yields
\begin{align*}
\tfrac{d}{dt} M_R(u(t)) & = \int [-\Delta\Delta \varphi_R]|u|^2 - \Delta[\varphi_R] |u|^4 + 4\Re \bar u_j u_k \partial_{jk}[\varphi_R]\,dx \\
& =: 8\int |\nabla u|^2\,dx - 8\int |u|^4\,dx + F_R(u) \\
& = 8\delta(u(t)) + F_R(u),
\end{align*}
where 
\begin{align*}
F_R(u) & = -8\int_{|x|\geq R}|\nabla u|^2+8\int_{|x|\geq R}|u|^4\,dx \\
& \quad +\int_{R\leq|x|\leq 2R} [-\Delta\Delta\varphi_R]|u|^2 \,dx - \int_{|x|\geq R} \Delta[\varphi_R]|u|^4 \\
& \quad + 4\Re\int_{|x|\geq R} \bar u_j u_k \partial_{jk}[\varphi_R]\,dx.
\end{align*}
To exhibit $\delta(u(t))$, we have used the fact that $E(u)=E(W)=\tfrac14\|W\|_{\dot H^1}^2$. 

We next observe that 
\[
F_R(\tilde W(t))\equiv 0.
\]
for any $R\geq 1$.  Indeed, for fixed $t_0\in\R$ we may view $\tilde W(t_0)$ as a static solution to \eqref{1.1}.  Thus (by the same computation above and Pohozaev identities) we have trivially that $\tfrac{d}{dt} M_R(\tilde W(t_0))=F_R(\tilde W(t_0))$ for all $t,t_0\in\R$.  However, since $W$ is real-valued we have $M_R(\tilde W(t))\equiv 0$, which yields the claim. 

Thus we may write
\[
F_R(u(t)) = F_R(u(t)) - F_R(\tilde W(t)).
\]
We now wish to show that each term appearing in this difference can be controlled by a small multiple of $\delta(t)$, provided $R$ is chosen appropriately.  Let $\eta>0$.  We begin by changing variables to obtain
\begin{align*}
\biggl|\int_{|x|\geq R} & |\nabla u(t)|^2 - |\nabla \tilde W(t)|^2 \,dx\biggr| \\
& \lesssim\bigl\{ \|\nabla u(t)\|_{L^2(|x|\geq R)} + \|\nabla W\|_{L^2(|x-x(t)|\geq R\lambda(t)} \bigr\} \|\nabla v(t)\|_{L^2} 
\end{align*}
We now note that 
\[
\{|x-x(t)|>R\lambda(t)\} \subset \{|x|\geq \tfrac12 C\}.  
\]
Indeed, this follows from the fact that $|x(t)|\leq 1$ and the definition of $R$. Thus we can choose $C$ sufficiently large to obtain
\[
\|\nabla W\|_{L^2(|x-x(t)|\geq R\lambda(t))}\leq \eta. 
\]
On the other hand, recalling \eqref{decompz} and changing variables,
\[
\|\nabla u\|_{L^2(|x|\geq R)} \leq \|\nabla W\|_{L^2(|x-x(t)|\geq R\lambda(t))}+\|\nabla v(t)\|_{L_x^2} \lesssim \eta+\delta(u(t)).
\]
Thus 
\[
\biggl|\int_{|x|\geq R} |\nabla u(t)|^2 - |\nabla \tilde W(t)|^2 \,dx\biggr| \lesssim [\eta+\eta_*] \delta(u(t)). 
\]

The remaining terms in $F_R$ are treated in a similar manner. Let us demonstrate how to estimate one additional term.  Using H\"older's inequality and Sobolev embedding, 
\begin{align*}
\biggl| \int_{|x|\geq R} & |u|^4 \,dx - \int_{|x|\geq R} |\tilde W|^4\,dx \biggr| \\
& \lesssim \bigl\{ \|u\|_{L_x^4(|x|\geq R)}^3 + \|W\|_{L_x^4(|x-x(t)|\geq R\lambda(t))}^3\} \|v(t)\|_{L_x^4} \\
& \lesssim [\eta^3+\eta_*^3]\|v(t)\|_{\dot H^1} \lesssim [\eta^3+\eta_*^3]\delta(u(t)). 
\end{align*}
Estimating similarly for the remaining terms and using properties of $\varphi$, we obtain
\[
|F_R(u(t))| \lesssim [\eta+\eta_*]\delta(u(t)).
\]
Taking $\eta>0$ sufficiently small, this completes the proof of \eqref{step2} and hence of Proposition~\ref{bootstrap}.\end{proof}

We now prove that the modulation parameters converge exponentially. 

\begin{proposition}[Exponential convergence]\label{exp} There exist $\lambda_0>0$, $x_0\in\R^4$, $\gamma_0\in\R^4$, and ${c}_0>0$ such that
\begin{equation}\label{exp-convergence} 
\begin{aligned}
&|x(t)-x_0|+|\tfrac{\lambda(t)}{\lambda_0}-1|+|\theta(t)-\theta_0|\lesssim e^{-{c}_0 t},\\
& \alpha(t)\sim\delta(u(t))\lesssim e^{-{c}_0 t}.
\end{aligned}
\end{equation}

\end{proposition}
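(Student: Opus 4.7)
The plan is to iterate the modulated virial argument of Proposition~\ref{bootstrap}, now with a \emph{fixed} radius $R$ made available by the uniform bounds \eqref{bootse1}, in order to promote the sequential smallness $\delta(u(t_n))\to 0$ from Proposition~\ref{sequence} into the integral inequality
\begin{equation*}
\int_t^\infty \delta(u(s))\,ds \lesssim_u \delta(u(t))\qtq{for every}t\geq 0.
\end{equation*}
Standard Gronwall-type reasoning will then yield exponential decay of $\delta(u(t))$, after which convergence of the modulation parameters follows directly from \eqref{mo2}.

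First I would apply Proposition~\ref{sequence} to $u$, which (since $u$ blows up forward in time) produces a sequence $t_n\to\infty$ with $\delta(u(t_n))\to 0$. Next, thanks to \eqref{bootse1}, the quantity $\inf_{t\geq 0}\lambda(t)$ is strictly positive and comparable to $\lambda(0)$, so I may fix once and for all a radius $R := C\bigl(\inf_{t\geq 0}\lambda(t)\bigr)^{-1}$. The two estimates proven inside Proposition~\ref{bootstrap}, namely $|M_R(u(t))|\lesssim_u \delta(u(t))$ and $\tfrac{d}{dt}M_R(u(t))\geq c\,\delta(u(t))$, then hold uniformly for $t\in[0,\infty)$. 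Integrating the differential inequality from $t$ to $t_n$ and letting $n\to\infty$ (using $\delta(u(t_n))\to 0$ and hence $M_R(u(t_n))\to 0$) gives the displayed integral inequality.

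Set $F(t):=\int_t^\infty\delta(u(s))\,ds$, so that $F'(t)=-\delta(u(t))$ and $F(t)\lesssim_u -F'(t)$; this yields $F(t)\lesssim_u F(0)e^{-2c_0 t}$ for some $c_0>0$. To upgrade integrated decay to pointwise decay of $\delta$, I would combine the Lipschitz-type bound in \eqref{mo2} with the uniform bound on $\lambda$ from \eqref{bootse1} and with $\alpha\sim\delta$ from \eqref{mo1}: these give $|\alpha'(t)|\lesssim_u \delta(u(t))\sim\alpha(t)$, so Gronwall on unit intervals shows $\delta(u(s))\sim\delta(u(t))$ for $s\in[t,t+1]$. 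Therefore
\[
\delta(u(t))\lesssim \int_t^{t+1}\delta(u(s))\,ds \leq F(t)\lesssim_u e^{-c_0 t},
\]
which is the second half of \eqref{exp-convergence}. The first half is now immediate: \eqref{mo2} together with \eqref{bootse1} give
\[
|\theta'(t)|+\bigl|\tfrac{\lambda'(t)}{\lambda(t)}\bigr|+\bigl|x'(t)-\tfrac{\lambda'(t)}{\lambda(t)}x(t)\bigr|\lesssim_u e^{-c_0 t},
\]
and since $|x(t)|\leq \tfrac12$ by \eqref{bootse1}, also $|x'(t)|\lesssim_u e^{-c_0 t}$. Integrating these derivatives from $t$ to $+\infty$ produces limits $\theta_0\in\R$, $\lambda_0>0$, $x_0\in\R^4$ attained at the asserted exponential rate, and the bound on $\alpha(t)$ in \eqref{exp-convergence} follows from $\alpha\sim\delta$.

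The main obstacle is the passage from the integrated estimate $F(t)\lesssim_u e^{-c_0 t}$ to \emph{pointwise} exponential decay of $\delta(u(t))$: the integral inequality alone only controls averages, and without the uniform bound on $\lambda$ from Proposition~\ref{bootstrap} the factor $\lambda(t)^2$ in \eqref{mo2} could spoil the Gronwall comparison of $\delta(u(s))$ with $\delta(u(t))$ on unit intervals. This is precisely the reason the bootstrap Proposition~\ref{bootstrap} must precede Proposition~\ref{exp}. Once the uniform bounds \eqref{bootse1} are in hand, the exponential convergence statement is essentially a self-improvement of the virial identity together with the modulation estimate \eqref{mo2}.
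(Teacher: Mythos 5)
Your argument follows essentially the same route as the paper: run the modulated virial estimate with a radius fixed by \eqref{bootse1}, deduce $\int_t^\infty\delta(u(s))\,ds\lesssim\delta(u(t))$, apply Gronwall, and then integrate \eqref{mo2}; your unit-interval Gronwall upgrade from integrated to pointwise decay of $\delta$ is exactly the step the paper leaves implicit in ``follows from \eqref{mo1}--\eqref{mo2}.'' The one imprecision is your claim that Proposition~\ref{sequence} ``produces a sequence $t_n\to\infty$'': that proposition only yields a sequence $\{t_n\}\subset\R$ with $\delta(u(t_n))\to 0$, which could a priori tend to $-\infty$, and your passage to the limit in $\int_t^{t_n}$ needs $t_n\to+\infty$. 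This is immediately repaired by your own integral inequality: since $\delta(u(t))\leq\eta_*$ on $[0,\infty)$, the bound $\int_{t_1}^{t_2}\delta\lesssim\delta(u(t_1))+\delta(u(t_2))$ gives $\delta\in L^1((0,\infty))$ and hence a sequence $T_n\to+\infty$ with $\delta(u(T_n))\to 0$, which is precisely how the paper obtains it.
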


\begin{proof}

Applying Proposition~\ref{bootstrap} and recalling that we have normalized to $x(0)=0$, we have
\[
\lambda(t)\sim \lambda(0) \qtq{and} |x(t)| \leq 1\qtq{for all}t\geq 0.
\]
We now repeat the modulated virial estimate used to prove Proposition~\ref{bootstrap} to obtain the estimate
\[
\int_{t_1}^{t_2} \delta(u(t))\,dt \lesssim \delta(u(t_1))+\delta(u(t_2))
\]
for any $t_2>t_1\geq 0$. In particular, $t\mapsto \delta(u(t))\in L_t^1((0,\infty))$, so that there exists $T_n\to\infty$ such that $\delta(u(T_n))\to 0$.  Applying the estimate above on intervals $[t,T_n]$ thus yields
\[
\int_t^\infty \delta(u(s))\,ds \lesssim \delta(t) \qtq{for any}t\geq 0.
\]
By Gronwall's inequality, it follows that
\[
\int_t^\infty \delta(u(s))\,ds \lesssim e^{-c_0t}.
\]
The desired exponential convergence of modulation parameters now follows from \eqref{mo1}--\eqref{mo2}.\end{proof}

To complete the proof of Theorem~\ref{t2.3} requires one final ingredient:
\begin{proposition}\label{2WW}
Let $u$ be a (global) solution to \eqref{1.1} satisfying 
\[
E(u)=E(W)\qtq{and}\|u_0\|_{\dot{H}^1(\R^4)}<\|W\|_{\dot{H}^1(\R^4)}.
\]
Suppose further that  
\begin{equation}\label{decayf}
\left\|u(t)-W\right\|_{\dot{H}_x^1} \leq Ce^{-ct},\quad \forall \ t\geq 0
\end{equation}
for some $c,C>0$. Then there exists $T\in \mathbb{R}$ such that $u(t)=W^{-}(t-T).$
\end{proposition}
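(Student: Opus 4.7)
The plan is to follow the Duyckaerts--Merle framework of linearizing \eqref{1.1} around $W$ and analyzing the resulting dynamics via the spectrum of the linearized operator, with the main new ingredient being the careful treatment of the translation kernel. Writing $u = W + h$, the equation becomes $\partial_t h = \mathcal{L} h + N(h)$, where (in real/imaginary components)
\[
\mathcal{L} = \begin{pmatrix} 0 & L_- \\ -L_+ & 0 \end{pmatrix}, \quad L_+ = -\Delta - 3W^2, \quad L_- = -\Delta - W^2,
\]
and $N(h)$ is superquadratic in $h$. The operator $\mathcal{L}$ has a pair of simple real eigenvalues $\pm\lambda_1$ with eigenfunctions $\mathcal{Y}_\pm$, continuous spectrum on $i\R$, and a kernel generated by the infinitesimal symmetries: $iW$ (phase), $W_1 = W+x\cdot\nabla W$ (scaling), and $\partial_j W$ for $j=1,\dots,4$ (translations). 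The last group is what makes the non-radial setting delicate.

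First I would invoke the modulation decomposition of Lemma~\ref{Mod1} and the exponential convergence of parameters in Proposition~\ref{exp}: there exist $(\lambda_0,\theta_0,x_0)$ such that
\[
u_{[\theta(t),\lambda(t),x(t)]}(t) = W + \tilde u(t), \qquad \tilde u(t)\in\mathcal{A}^\perp,
\]
with $\|\tilde u(t)\|_{\dot H^1}\lesssim e^{-c_0 t}$ and $(\theta(t),\lambda(t),x(t))\to(\theta_0,\lambda_0,x_0)$ exponentially. By applying the inverse symmetry $g_0=(\theta_0,\lambda_0,x_0)$, we may assume the limit is trivial, i.e., $(\theta(t),\lambda(t),x(t))\to(0,1,0)$ exponentially. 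The orthogonality $\tilde u\perp\mathcal{A}$ kills the kernel contribution, so $\tilde u$ lives in the $\mathcal{L}$-invariant complement of the generalized kernel. Decomposing $\tilde u(t) = a_+(t)\mathcal{Y}_+ + a_-(t)\mathcal{Y}_- + h_c(t)$ with $h_c$ in the continuous-spectrum part, a projection onto the unstable directions yields the ODE system
\[
a_\pm'(t) = \pm\lambda_1 a_\pm(t) + O(\|\tilde u(t)\|_{\dot H^1}^2),
\]
while $h_c$ satisfies a dispersive equation whose linear semigroup is bounded on $\dot H^1$.

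Second, I would show that the exponential decay of $\tilde u$ forces a rigid structure on $a_\pm$ and $h_c$. Since $a_+(t)\to 0$ as $t\to+\infty$ and the linear part is unstable, Gronwall/Duhamel forces $a_+(t)=O(e^{-2c_0 t})$ (i.e., $a_+$ is determined by the nonlinear tail and decays strictly faster than the linear rate). For $a_-$, the linear part is stable; combined with the bound on the nonlinear terms, one obtains $a_-(t) = a_-^\infty e^{-\lambda_1 t} + O(e^{-(\lambda_1+c_0')t})$ for some $a_-^\infty\in\R$, and $h_c$ decays faster than $e^{-\lambda_1 t}$. The subcritical constraint $\|u_0\|_{\dot H^1}<\|W\|_{\dot H^1}$ and the sign analysis of the second variation rule out $W^+$-type behavior (which has $a_-^\infty$ giving super-threshold kinetic energy), so the leading mode is $e^{-\lambda_1 t}\mathcal{Y}_-$.

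Third, I would appeal to the construction/uniqueness of $W^-$: via a fixed-point iteration in an exponentially weighted space of the form $\{h:\sup_{t\geq 0} e^{\lambda_1 t}\|h(t)\|_{\dot H^1}<\infty\}$, for each $a\in\R$ there is a unique solution to \eqref{1.1} of the form $W + ae^{-\lambda_1 t}\mathcal{Y}_- + O(e^{-2\lambda_1 t})$, and by rescaling (time translation absorbs the coefficient $a$) there are, up to the action of $G$ and time translation, exactly two solutions $W^\pm$ converging exponentially to $W$. The preceding analysis then gives $u = W^-(\cdot - T)$ modulo the symmetries we have already normalized away. The main obstacle, compared to the radial case in \cite{duyckaerts1}, is verifying that the enlarged kernel (with the four translation directions $\partial_j W$) is correctly spanned by the modulation parameter $x(t)$, so that the orthogonal part $\tilde u$ genuinely avoids all zero modes of $\mathcal{L}$. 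This is ensured by Lemma~\ref{Mod1} and the coercivity of Lemma~\ref{coer} on $\mathcal{A}^\perp$, together with the exponential convergence of $x(t)$ to $x_0$ obtained in Proposition~\ref{exp}; once these are in place, the fixed-point construction and uniqueness argument from \cite{duyckaerts1} go through with only notational changes.
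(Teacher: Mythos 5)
Your overall strategy --- linearize around $W$, decompose along the eigenmodes $e^{\pm\lambda_1 t}$ and the enlarged non-radial kernel, bootstrap the decay rate, and then invoke the fixed-point construction and uniqueness of $W^{\pm}$ together with the subcritical constraint to select $W^-$ --- is the route the paper takes, following \cite[Sections 5--6]{duyckaerts1}. However, there is a genuine gap at precisely the step you identify as the crux of the non-radial case. To control the component of $v$ complementary to the eigendirections and the kernel, what is needed is coercivity of the linearized quadratic form $\F$ on the subspace $\mathcal{B}^\perp$ of functions orthogonal to $iW$, $W_1$, $\partial_jW$ \emph{and} $\F$-orthogonal to both $e_+$ and $e_-$ (Lemma~\ref{lem.positivity}). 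This is not the statement of Lemma~\ref{coer}, which gives coercivity on $\mathcal{A}^\perp$ (orthogonality to $W$ rather than to the span of $e_\pm$), and your appeal to ``a dispersive equation whose linear semigroup is bounded on $\dot H^1$'' begs the question: boundedness of the linear flow on the spectral complement is exactly what the coercivity of $\F$ (a conserved quantity for the linearized equation) is used to prove. Deducing Lemma~\ref{lem.positivity} from Lemma~\ref{coer} in the non-radial setting requires the additional linear-algebra argument the paper sketches: writing $e_\pm=\pm\eta iW+\xi W_1+\zeta W+h_\pm$ with $h_\pm\in\mathcal{A}^\perp$, verifying that $h_+$ and $h_-$ are linearly independent --- which uses that $(\Delta+W^2)W^3\notin\mathrm{span}\{W,W_1,\partial_1W,\dots,\partial_4W\}$, a condition that must be rechecked once the four translation directions are adjoined --- and then running the almost-orthogonality estimate with constant $b<1$. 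Your proposal does not supply this, and it is the one place where the radial proof does not carry over verbatim.

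Two smaller points. First, a single pass of Gronwall/Duhamel only yields $a_-(t)=O(e^{-2c_0t})$ when $2c_0<\lambda_1$; the asymptotics $a_-(t)=a_-^\infty e^{-\lambda_1t}+O(e^{-(\lambda_1+c')t})$ require the finite iteration of decay rates $c\mapsto\tfrac{3c}{2}\mapsto\tfrac{9c}{4}\mapsto\cdots$ leading to \eqref{key-dm0} and \eqref{key-dm}, and even then identifying $u$ with $W^a$ requires the further induction improving $\|u(t)-W^a(t)\|_{\dot H^1}\lesssim e^{-mt}$ to all orders $m$, since the uniqueness clause in the fixed-point construction is only available at rate $e^{-(k+\frac12)\lambda_1t}$ for large $k$, not at the $O(e^{-\frac32\lambda_1t})$ level your leading-order match provides. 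Second, the modulation step is superfluous here: hypothesis \eqref{decayf} already gives exponential convergence to $W$ itself, so the paper decomposes $v=u-W$ directly and tracks the kernel coefficients $\beta,\gamma_0,\gamma_j$ as part of the spectral decomposition \eqref{vdecompp}; if you do modulate and renormalize by $g_0$, you must also note that $g_0W=W$ forces $g_0=\mathrm{id}$ in order to recover the exact conclusion $u(t)=W^-(t-T)$ rather than equality modulo symmetries.
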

This result was established in \cite{duyckaerts1} in the radial setting. We will describe below how to obtain the non-radial analogue.  Taking Proposition~\ref{2WW} for granted for the moment, let us complete the proof of Theorem~\ref{t2.3}.

\begin{proof}[Proof of Theorem \ref{t2.3}]
By using the exponential convergence of the modulation parameters (Proposition~\ref{exp}) and the triangle inequality, we can readily obtain
\begin{align*}
\|u(t)-\lambda_0 e^{i\theta_0} W(\lambda_0 x + x_0)\|_{\dot H^1} & \lesssim \|u(t)-\lambda(t) e^{i\theta(t)}W(\lambda(t)x+x(t))\|_{\dot H^1} + e^{-c_0t}  \\
& \lesssim \delta(u(t)) + e^{-c_0 t} \lesssim e^{-c_0t}. 
\end{align*}
By a change of variables, this yields
\[
\big\|\tfrac{1}{\lambda_0}e^{-i\theta_0} u(\tfrac{t}{\lambda_0^2},\tfrac{x-x_0}{\lambda_0}) - W\big\|_{\dot H^1} \lesssim e^{-c_0\lambda_0^{-2} t}.
\]
We can therefore apply Proposition~\ref{2WW} to conclude that 
\[
\tfrac{1}{\lambda_0} e^{-i\theta_0}u(\tfrac{t}{\lambda_0^2},\tfrac{x-x_0}{\lambda_0}) = W^{-}(t-T)
\]
for some $T\in\R$, which yields Theorem~\ref{t2.3}.
\end{proof}

It therefore remains to prove Proposition~\ref{2WW}.

\subsection{Proof of Proposition~\ref{2WW}}\label{S:2WW}The proof of Proposition~\ref{2WW} largely follows the arguments of \cite[Sections 5--6]{duyckaerts1}.  Thus in this subsection, we will primarily sketch the strategy outlined in \cite{duyckaerts1} and describe the changes needed to address the non-radial setting.

We will write either $f=f_1+if_2$ or $f={f_1\choose f_2}$ for a complex-valued function $f$ with
real part $f_1$ and imaginary part $f_2$.  We suppose $u$ is a solution as in the statement of Proposition~\ref{2WW} and define $v=u-W$. Equation \eqref{1.1} yields
\[
\partial_t v + \mathcal{L}(v) + R(v) = 0,
\]
where $\mathcal{L}$ is the linearized operator
\begin{equation}\label{defL}
\mathcal{L}:=\left[\begin{array}{cc} 0 & \Delta+W^{2} \\ -\Delta-3W^2 & 0 \end{array}\right]
\end{equation}
and $R(v)$ is a the nonlinear term
\[
R(v):=-i|W+v|^2(W+v)+iW^3 + 3iW^2 v_1 - W^2 v_2. 
\]
We note that $\mathcal{L}$ has a pair of radial eigenvectors $e_+=\bar e_-$ with $\mathcal{L}e_\pm = \pm\lambda_1 e_\pm$ and $\lambda_1>0$.  

One key ingredient in the proof is the construction in \cite{duyckaerts1} of the special solution $W^-$ itself. This is accomplished by means of an iterative argument.  We summarize the construction as follows:  

\begin{remark}[The construction of $W^\pm$] Fix a parameter $a\in\R$.  Then there exist (radial) Schwartz functions $\{\Phi_j^a\}_{j\geq 1}$ with $\Phi_1^a = a e_+$ such that the functions
\[
W_k^a(t,x) = W(x) + \sum_{j=1}^k e^{-j\lambda_1 t}\Phi_j^a(x)
\]
are approximate solutions to \eqref{1.1} as $t\to\infty$, in the sense that
\[
(i\partial_t+\Delta)W_k^a = -|W_k^a|^2 W_k^a + \mathcal{O}(e^{-(k+1)\lambda_1 t}). 
\]
Applying a fixed point argument around these approximate solutions, one finds that for $k$ sufficiently large there is a unique radial solution $W^a$ to \eqref{1.1} on some interval $[t_k,\infty)$ that satisfies
\[
\|\nabla[W^a - W_k^a]\|_{L_t^6 L_x^{12/5}((t,\infty)\times\R^4)} \leq e^{-(k+\frac12)\lambda_1 t}.
\]
The solution $W^a$ is independent of $k$, and in particular satisfies 
\[
\|W^a - W - ae^{-\lambda_1 t}e_+\|_{\dot H^1} \leq e^{-\frac32\lambda_1 t}.
\]
The solution $W^-$ is obtained by taking $a=-1$.  Similarly, $W^+$ corresponds to $a=1$ and $W$ corresponds to $a=0$.  For $a\in\R\backslash\{0\}$, the solution $W^a$ is simply a time translation of $W^+$ or $W^-$ (depending on the sign of $a$).  

The solutions $W^\pm$ are distinguished by the fact that 
\[
\|W^-(t)\|_{\dot H^1}<\|W\|_{\dot H^1}<\|W^+(t)\|_{\dot H^1}
\]
for large $t>0$.
\end{remark}

In light of this construction, what we really need to prove is that there exists $a\in\R$ such that $u=W^a$, so that the subcritical assumption then forces $u$ to be some time translate of $W^-$, as desired.  

As we will describe, the key step will be to prove the following estimates. First, we have the decay estimate
\begin{equation}\label{key-dm0}
\|v(t)\|_{\dot H^1} + \|\nabla v\|_{L_t^6 L_x^{12/5}((t,\infty)\times\R^4)}\lesssim e^{-\lambda_1 t},
\end{equation}
for all $t$ large. Second, there exists $a\in\R$ such that 
\begin{equation}\label{key-dm}
\|v(t)-a e^{-\lambda_t} e_+ \|_{\dot H^1} + \|\nabla[v(s)- a e^{-\lambda_1 s}e_+]\|_{L_t^6 L_x^{12/5}((t,\infty)\times\R^4)}\lesssim e^{-\frac32\lambda_1 t}
\end{equation}
for all $t$ large. 

We begin by collecting what we know about $v$ so far.  Firstly, the assumptions in Proposition~\ref{2WW} and some further arguments (in the spirit of local well-posedness) allow us to prove the following estimates on $v$ and the nonlinear term $R(v)$:
\begin{align*}
&\|v(t)\|_{\dot H^1} + \|\nabla v\|_{L_t^6 L_x^{\frac{12}{5}}((t,\infty)\times\R^4)} \lesssim e^{-ct}, \\
&\|R(v(t))\|_{L_x^{\frac43}} + \|\nabla R(v)\|_{L_t^2 L_x^{\frac43}((t,\infty)\times\R^4)}\lesssim e^{-2ct}.
\end{align*}
Without loss of generality, we may assume $c<\lambda_1$.  We begin by improving the decay estimate on $v$ until we reach the decay rate $e^{-\lambda_1 t}$.  This is done as follows: 

We would like to use the equation that $v$ obeys.   In particular, it is useful at this point to give a spectral decomposition of $v$ relative to the linearized operator $\mathcal{L}$.  This is essentially the only place where the non-radial assumption plays a role, as it leads to a larger kernel for $\mathcal{L}$.  In particular, although $\mathcal{L}$ does not have any additional nonzero eigenvalues in the non-radial setting (cf. Lemma~\ref{lem.positivity} below), we must include the non-radial functions $\partial_j W$ for $j\in\{1,2,3,4\}$ in the kernel.  Recalling from Section~\ref{S:modulation} that $W_1:=W+x\cdot\nabla W$, we then decompose
\begin{equation}\label{vdecompp}
v(t) = \alpha_+(t) e_+ + \alpha_-(t) e_- + i\beta(t) W + \gamma_0(t) W_1 +\sum_{j=1}^4 \gamma_j(t) \partial_j W + v^\perp(t),
\end{equation}
so that $v^\perp$ belongs to a subspace on which the quadratic form 
\[
\F(f,g):=\tfrac12\Im\int \mathcal{L}f\,\bar g\,dx
\]
is positive definite.  Note that in the previous section, we have written $\F(f)=\F(f,f)$; we will continue to abuse notation slightly in this way.  The positivity of $\F$ relies on the following lemma, the proof of which follows the proof of \cite[Lemma~5.2]{duyckaerts1}.

\begin{lemma} \label{lem.positivity} Let $\mathcal{B}^\perp$ denote the set of functions $v\in \dot H^1$ such that
\[
(iW,v)_{\dot H^1} = (W_1,v)_{\dot H^1} = (\partial_j W,v)_{\dot H^1} = \F(e_+,v)=\F(e_-,v) = 0
\]
for $j\in\{1,2,3,4\}$. Then there exists $c>0$ such that
\begin{equation}\label{positif}
\F(f,f)\geq c\|f\|_{\dot H^1}^2 \qtq{for all}f\in \mathcal{B}^\perp. 
\end{equation}
\end{lemma}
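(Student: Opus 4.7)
My plan is to reduce the coercivity to a statement about two real Schr\"odinger operators and then exploit the rotational symmetry of $W$. Writing $f = f_1 + if_2$ with $f_1, f_2$ real, a short computation using $\Delta W + W^3 = 0$ yields
\begin{equation*}
\F(f,f) = \tfrac12\langle L_+ f_1, f_1\rangle_{L^2} + \tfrac12\langle L_- f_2, f_2\rangle_{L^2},
\end{equation*}
where $L_+ := -\Delta - 3W^2$ and $L_- := -\Delta - W^2$ are the familiar self-adjoint operators appearing in the linearization of \eqref{1.1} around $W$. The conditions defining $\mathcal{B}^\perp$ then decouple: the $\dot H^1$-orthogonalities to $W_1$ and $\partial_j W$ constrain only $f_1$, while that to $iW$ constrains only $f_2$. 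For the eigenvector conditions, writing $e_+ = p + iq$ with $p, q$ real and $e_- = \bar e_+$, the identity $\F(e_\pm, f) = \pm\tfrac{\lambda_1}{2}\Im\int e_\pm \bar f$ yields the pair $\int q f_1 = 0$ and $\int p f_2 = 0$. The functions $p, q$ are radial: indeed, $e_\pm$ arise as principal eigenfunctions of the fourth-order operators $L_\mp L_\pm$ restricted to the radial sector (where Perron--Frobenius selects a radial ground state), consistent with the fact that the linearized instability of $W$ comes from the scaling direction rather than any translation.

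Because $L_\pm$ commute with rotations, the two quadratic forms respect the splitting $\dot H^1 = \dot H^1_{\mathrm{rad}} \oplus \dot H^1_{\mathrm{nrad}}$, so it suffices to prove the coercivity on each sector independently. On the radial sector, the claim is exactly \cite[Lemma~5.2]{duyckaerts1}: the unique negative eigenvalue of $L_+|_{\mathrm{rad}}$ is eliminated by $\int qf_1=0$, the scaling direction $W_1$ (the radial part of $\ker L_+$) by $\dot H^1$-orthogonality, and the analogous analysis for $L_-$ uses the kernel element $W$ together with $\int pf_2=0$. On the non-radial sector, the two integral conditions hold automatically (since $p, q$ are radial), leaving only $(\partial_j W, f_1)_{\dot H^1} = 0$ for $j\in\{1,\ldots,4\}$ as a constraint. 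It therefore remains to show that $L_-$ is strictly positive on $\dot H^1_{\mathrm{nrad}}$ and that $L_+$ is strictly positive on the subspace of non-radial functions orthogonal to $\mathrm{span}\{\partial_1 W,\ldots,\partial_4 W\}$. Decomposing further into spherical harmonics of degree $\ell \geq 1$, the radial reduction of $L_\pm$ gains a centrifugal potential $\ell(\ell+2)/r^2$ that strictly raises the spectrum relative to the $\ell = 0$ sector; combined with the ground-state identifications $L_- W = 0$ and $L_+\partial_j W = 0$ and Sturm comparison applied to the associated ODEs, this gives $L_- > 0$ on $\dot H^1_{\mathrm{nrad}}$, while $L_+|_{\mathrm{nrad}}$ has no negative eigenvalue and its kernel is precisely the $\ell = 1$ subspace $\mathrm{span}\{\partial_j W\}$.

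The final step upgrades the resulting $L^2$-weighted coercivity to $\dot H^1$-coercivity. This is a standard compactness-and-contradiction argument: a hypothetical sequence $f_n \in \mathcal{B}^\perp$ with $\|f_n\|_{\dot H^1} = 1$ and $\F(f_n, f_n)\to 0$ admits a weak limit $f_\star \in \mathcal{B}^\perp$, and because $W^2$ decays like $|x|^{-4}$ at infinity, the potential term $\int W^2 |f_n|^2$ is compact under weak $\dot H^1$-convergence, forcing $\F(f_\star, f_\star) \leq 0$ and hence (by the spectral analysis above) $f_\star = 0$, contradicting the normalization $\|f_n\|_{\dot H^1} = 1$. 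The main obstacle throughout is the spectral bookkeeping on the non-radial sector (in particular the verification that $e_\pm$ are radial, so that the $\F$-orthogonality only constrains the radial part of $f$, and that $L_+$ gains no spurious negative mode from the angular sectors); the remainder amounts to a direct adaptation of the radial analysis in \cite{duyckaerts1}.
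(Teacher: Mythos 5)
Your argument is correct, but it follows a genuinely different route from the paper. The paper's proof is a direct transplant of the algebraic scheme of \cite[Lemma~5.2]{duyckaerts1}: one decomposes $f=\alpha W+\tilde h$ and $e_\pm=\pm\eta iW+\xi W_1+\zeta W+h_\pm$ relative to the (enlarged, now including $\partial_jW$) set $\mathcal{A}$, invokes the already-established coercivity of $\F$ on $\mathcal{A}^\perp$ (Lemma~\ref{coer}, imported from Rey and Duyckaerts--Kenig--Merle), proves a strict Cauchy--Schwarz bound for the rank-two correction coming from $h_\pm$, and finishes with elementary algebra using $\F(W)=-\|W\|_{\dot H^1}^2$. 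You instead split into real/imaginary parts and radial/non-radial sectors, quote the radial lemma of \cite{duyckaerts1} verbatim on the radial sector, and handle the non-radial sector by a spherical-harmonics analysis of $L_\pm$ followed by a compactness upgrade from positivity to coercivity. Your reduction is sound: the constraints do decouple as you claim (in particular $(iW,v)_{\dot H^1}=0$ constrains only $f_2$, and the pair $\F(e_\pm,v)=0$ is equivalent to $\int qf_1=\int pf_2=0$ with $p,q$ radial, so the $\F$-orthogonality conditions only see the spherical average of $f$), and the sector-by-sector positivity facts you invoke are standard. What the paper's approach buys is brevity — Lemma~\ref{coer} already packages all of the non-radial spectral information (in particular the role of $\partial_jW$ as the null directions of $L_+$ in the $\ell=1$ sector), so no new spectral analysis is needed; what your approach buys is transparency about exactly which constraint kills which negative/null direction, at the cost of re-deriving those spectral facts and adding a compactness step. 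Two small cautions if you write this up: (i) in $d=4$ one has $\partial_jW\notin L^2$, so statements like ``the kernel of $L_+|_{\ell=1}$ is $\mathrm{span}\{\partial_jW\}$'' must be interpreted at the level of the quadratic form on $\dot H^1$ (zero resonance) rather than $L^2$-spectral theory — the Allegretto--Piepenbrink/ground-state-substitution argument still applies; (ii) the radiality of $e_\pm$ should simply be cited from \cite{duyckaerts1} rather than justified by the heuristic about $L_\mp L_\pm$, since your whole decoupling rests on it.
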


\begin{proof}[Sketch of the proof] Let $f\in\mathcal{B}^\perp$. Recalling the set $\mathcal{A}$ from Section~\ref{S:modulation}, the starting point is to decompose $f$ and $e_\pm$ relative to this space.  In particular, we write
\[
f=\alpha W + \tilde h,\quad e_\pm = \pm\eta i W + \xi W_1 + \zeta W + h_\pm,
\]
with $\tilde h,h_\pm\in \mathcal{A}^\perp$ and $h_-=\bar h_+$.  Using the fact that 
\[
(\Delta + W^2)W^3 \notin\text{span}\{W,W_1,\partial_1W,\dots,\partial_4 W\},
\]
we can follow the arguments of \cite[Lemma~5.2]{duyckaerts1} to deduce the following:  First, $h_+$ and $h_-$ are linearly independent.  Second, we can write
\[
\F(f) = -\frac{\F(h_+,\tilde h)\F(h_-,\tilde h)}{\sqrt{\F(h_+)}\sqrt{\F(h_-)}} + \F(\tilde h),
\]
and finally there exists $b\in(0,1)$ such that
\[
\biggl| \frac{\F(h_+,g)\F(h_-,g)}{\sqrt{\F(h_+)}\sqrt{\F(h_-)}} \biggr| \leq b \F(g) \qtq{for all}g\in \mathcal{A}^\perp. 
\]
Thus (recalling Lemma~\ref{coer})
\begin{equation}\label{fts01}
\F(f) \geq (1-b)\F(\tilde h) \gtrsim \|\tilde h\|_{\dot H^1}^2.
\end{equation}
Now note that 
\[
\F(W)=-\|W\|_{\dot H^1}^2,\qtq{so that}\F(f) = \F(\tilde h)-\alpha^2\|W\|_{\dot H^1}^2.
\]
In particular, we have from \eqref{fts01} that $\alpha^2 \|W\|_{\dot H^1}^2 \leq b\F(\tilde h)$, so that
\begin{align*}
\|\tilde h\|_{\dot H^1}^2 & = \|f\|_{\dot H^1}^2 - \alpha^2\|W\|_{\dot H^1}^2 \\
& \geq \|f\|_{\dot H^1}^2 - b\F(\tilde h) \\
& \geq \|f\|_{\dot H^1}^2 - b[\F(f)+\alpha^2\|W\|_{\dot H^1}^2] \\
& \geq (1-b)\|f\|_{\dot H^1}^2 - b\F(f). 
\end{align*}
Inserting this back into \eqref{fts01}, we can obtain $\F(f)\gtrsim \|f\|_{\dot H^1}^2$, as desired.  \end{proof}

The equation for $v$ now leads to differential identities for the coefficient functions in \eqref{vdecompp} that may be used to estimate their size.  Using the assumptions on $v$ (and estimates on $R(v)$), one firstly obtains the following estimates on the coefficients $\alpha_\pm$: 
\[
|\alpha_-(t)| \lesssim e^{-ct},\quad |\alpha_+(t)|\lesssim \begin{cases} e^{-2ct} & c>\lambda_1, \\ \max\{e^{-2ct},e^{-\lambda_1t}\} & c\leq\lambda_1.\end{cases}
\]
For the estimates on the remaining coefficients, one obtains
\[
|\beta(t)|+\sum_{j=0}^4 |\gamma_j(t)| + \|v^\perp(t)\|_{\dot H^1} \lesssim e^{-\frac{3c}{2}t}.
\]
Thus we can obtain the estimate
\[
\|v(t)\|_{\dot H^1} \lesssim \max\{e^{-\frac{3c}{2}t},e^{-\lambda_1 t}\}
\]
(and we can obtain a similar estimate for the Strichartz norm). 

If $\tfrac{3c}{2}>\lambda_1$, we already obtain the desired estimate in \eqref{key-dm0}.  Otherwise, we can also obtain a decay rate of $e^{-3ct}$ for $R(v(t))$ and then iterate the argument just given, leading to
\[
\|v(t)\|_{\dot H^1} \lesssim \max\{e^{-\frac{9c}{4}t},e^{-\lambda_1 t}\}. 
\]
Iterating this finitely many times, we can finally obtain the desired decay rate of $e^{-\lambda_1 t}$ in \eqref{key-dm0}.

At this point, we modify the argument above slightly to obtain \eqref{key-dm}.  In particular, one can show that $v(t)-\alpha_+(t) e_+$ admits a better estimate than $v(t)$ itself.  Moreover, $\alpha_+(t)$ is asymptotic to $a e^{-\lambda_1 t}$ for some $a\in\R$, with a quantitative rate of convergence.  In particular, one can obtain
\begin{equation}\label{v-a-bd}
\|v(t)-a e^{-\lambda_1 t}e_+\|_{\dot H^1} \lesssim e^{-\frac32\lambda_1 t}. 
\end{equation}
Combining this with some local well-posedness arguments, we obtain \eqref{key-dm}.

Using \eqref{key-dm} as a base case, we would now like to show that for any $m\geq 0$, there exists $t_0>0$ such that
\[
\|u(t)-W^a(t)\|_{\dot H^1} \lesssim e^{-mt}
\]
for any $t\geq t_0$. This in turn yields $u=W^a$ by the uniqueness properties of $W^a$.

The estimate \eqref{key-dm} (and the properties of the construction of $W^\pm$) gives the base case $m=\tfrac32\lambda_1.$  To complete the inductive step, one notes that writing $v^a:=W^a(t)-W$, we have
\[
\partial_t(v-v^a) + \mathcal{L}(v-v^a) = -[R(v)-R(v^a)]. 
\]
This puts us in a position to apply the inductive hypothesis and an iterative argument much like the one we just described.  This allows us to improve the decay rate, which completes the induction and therefore completes the proof of Proposition~\ref{2WW}.  For more details, we refer the reader to \cite[Sections~5--6]{duyckaerts1}.

\section{Proof of the main result}\label{reduction}

In this section we put together the pieces assembled above to complete the proof of our main result, Theorem~\ref{t1.1}. 

We begin with a lemma that controls the variation of the functional $\delta$ on an interval by the scattering size of that interval.

\begin{lemma}\label{scattersmall} Let $u:\R\times\R^4\to\C$ be a subcritical threshold solution and $J\subset\R$ an interval such that
\[
\|u\|_{L_{t,x}^6(J\times\R^4)}\leq M.
\]
Then there exist $0<c(M)\ll1$ and $C(M)\geq 1$ such that
\begin{equation}\label{f3.2}
\inf_{t\in J}\delta(u(t)) \leq c(M) \implies \sup_{t\in J}\delta(u(t))\leq C(M)\inf_{t\in J}\delta(u(t)).
\end{equation}
\end{lemma}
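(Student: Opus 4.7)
The plan is to combine the modulation analysis of Section~\ref{S:modulation} with the stability theory for \eqref{1.1} (Lemma~\ref{pertu}). Pick $t_*\in J$ with $\delta(u(t_*))\leq 2\inf_{t\in J}\delta(u(t))\leq 2c(M)$. Provided $c(M)<\delta_0$, Lemma~\ref{Mod1} produces parameters $(\theta_0,\lambda_0,x_0)$ such that the rescaled function
\[
\tilde u(t,x):=e^{i\theta_0}\lambda_0^{-1}\,u\bigl(\lambda_0^{-2}t+t_*,\,\lambda_0^{-1}(x-x_0)\bigr)
\]
is itself an exact solution of \eqref{1.1} on $\tilde J:=\lambda_0^2(J-t_*)$ satisfying $\tilde u(0)=W+v_0$ with $\|v_0\|_{\dot H^1}\sim\delta(u(t_*))$ (by the pointwise estimate from Lemma~\ref{modula}), $\|\tilde u\|_{L^\infty_t\dot H^1(\tilde J)}\leq\|W\|_{\dot H^1}$ (Lemma~\ref{GlobalS}), and $\|\tilde u\|_{L^6_{t,x}(\tilde J\times\R^4)}=\|u\|_{L^6_{t,x}(J\times\R^4)}\leq M$ by scale-invariance of $L^6_{t,x}$.

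I next apply Lemma~\ref{pertu} taking $\tilde u$ itself as the approximate solution (zero error, $S_{\tilde J}(\tilde u)\leq M^6$) and comparing against the stationary $W$ (whose evolution from data $W$ at time $0$ is $W$ itself). This yields the qualitative bound $\|\tilde u-W\|_{L^\infty_t\dot H^1(\tilde J)}\to 0$ as $\|v_0\|_{\dot H^1}\to 0$; choosing $c(M)$ sufficiently small makes $\tilde u(t)$ within $\tfrac14\|W\|_{L^4}$ of $W$ in $L^4_x$ uniformly on $\tilde J$ (via Sobolev). Fixing $R=R(W)$ large enough that $\|W\|_{L^4(B_R)}\geq\tfrac34\|W\|_{L^4}$, we then have $\|\tilde u(t)\|_{L^4(B_R)}\geq\tfrac12\|W\|_{L^4}$ throughout $\tilde J$, and combining the elementary inequality $\|f\|_{L^4(B_R)}\leq|B_R|^{1/12}\|f\|_{L^6(B_R)}$ with $\|\tilde u\|_{L^6_{t,x}(\tilde J\times B_R)}\leq M$ forces the a priori length bound $|\tilde J|\leq C(M,W)$.

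With $|\tilde J|$ controlled by $M$, the final step is to upgrade the qualitative closeness to a Lipschitz bound. Writing $w:=\tilde u-W$, the equation is $(i\partial_t+\Delta)w=-W^2(3\Re w+i\Im w)+N(w)$ with $N(w)=\mathcal{O}(|W||w|^2+|w|^3)$. Since $W,\nabla W\in L^\infty\cap L^4$, the pointwise bound $\|\nabla[W^2w](t)\|_{L^2_x}\lesssim\|w(t)\|_{\dot H^1}$ holds, so the energy Strichartz estimate $L^1_tL^2_x\to L^\infty_tL^2_x$ applied to $\nabla w$ together with Gronwall (iterated over subintervals of $\tilde J$ of fixed short length, which absorbs the linear term) yields
\[
\|w\|_{L^\infty_t\dot H^1(\tilde J)}\leq C(M)\,\|v_0\|_{\dot H^1}.
\]
Since $\|\nabla\cdot\|_{L^2}$ is invariant under the symmetry group, for any $s\in J$, setting $\tau:=\lambda_0^2(s-t_*)\in\tilde J$, one has $\delta(u(s))=\delta(\tilde u(\tau))\leq 2\|W\|_{\dot H^1}\|w(\tau)\|_{\dot H^1}\leq C(M)\|v_0\|_{\dot H^1}\lesssim C(M)\delta(u(t_*))\leq 2C(M)\inf_{t\in J}\delta(u(t))$, which is the claim. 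The principal difficulty is this final Gronwall estimate: because the linearized operator $\mathcal{L}$ around $W$ carries a positive eigenvalue $\lambda_1>0$, any linear-stability bound for $w$ grows like $e^{\lambda_1|\tilde J|}$, so the a priori bound $|\tilde J|\leq C(M)$ coming from the scattering hypothesis is essential to close the argument.
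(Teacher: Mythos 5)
Your proof is correct and shares the paper's core mechanism --- decompose $u(t_*)=\tilde W+\tilde v$ at a time where $\delta$ is nearly minimal via Lemma~\ref{Mod1}/\ref{modula}, then show by a perturbative argument that $\|\tilde v(t)\|_{\dot H^1}\lesssim_M\|\tilde v(t_*)\|_{\dot H^1}$ throughout $J$ --- but you organize the final bootstrap differently. The paper never bounds the interval length: it writes the Duhamel/Strichartz estimate for $\nabla\tilde v$ directly on $J$, with every term carrying a factor of $\|u\|_{L^3_tL^{12}_x}$ or $\|\nabla\tilde v\|_{S^0}$, and closes the bootstrap by partitioning $J$ into $O_M(1)$ subintervals on which $\|u\|_{L^3_tL^{12}_x}$ is small. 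You instead insert an intermediate step: using the stability lemma qualitatively to keep $\tilde u$ near $W$ in $L^4_x$, and the lower bound $\|\tilde u(t)\|_{L^6(B_R)}\gtrsim 1$ this forces, you extract the a priori bound $|\tilde J|\leq C(M)$ on the \emph{rescaled} interval, and then iterate a local-in-time estimate over $O_M(1)$ subintervals of fixed length. This is a valid and rather illuminating alternative --- it makes explicit why $C(M)$ must degenerate (the linearization around $W$ has the growing mode $e^{\lambda_1 t}$, so any Lipschitz bound costs $e^{\lambda_1|\tilde J|}$), whereas in the paper this is hidden in the multiplicative loss per subinterval. One technical caveat: the pure energy estimate $L^1_tL^2_x\to L^\infty_tL^2_x$ plus Gronwall handles only the terms \emph{linear} in $w$ (where $W^2\in L^\infty$ and $W\nabla W\in L^4$ do the work); for the superlinear terms such as $|w|^2\nabla w$ one cannot bound $\||w|^2\nabla w\|_{L^1_tL^2_x}$ by $\|\nabla w\|_{L^\infty_tL^2_x}$ alone and must use genuine mixed Strichartz norms (e.g.\ placing the output in $L^{3/2}_{t,x}$ and using $\|w\|_{L^3_tL^{12}_x}\lesssim\|\nabla w\|_{S^0}$), exactly as in the paper's display. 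Since those terms are quadratic and cubic in the small quantity $\|\nabla w\|_{S^0}$, they are absorbed in the standard way and the argument closes; but as written your Gronwall step is incomplete on this point.
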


\begin{proof} We first let $c(M)<\tfrac12\delta_0$, with $\delta_0$ as in Lemma~\ref{modula}. We choose $t_0\in J$ such that 
\begin{equation}\label{equ:delt0}
\delta(u(t_0))<2\inf\limits_{t\in J} \delta(u(t))<\delta_0.
\end{equation}
By Lemma~\ref{Mod2}, we may write
\[
u(t_0,x) = \tilde v(x)+\tilde W(x),
\]
where
\[
\tilde W(x):=\lambda(t_0)e^{i\theta(t_0)}W(\lambda(t_0)x+x(t_0))\qtq{and}\|\tilde{v}\|_{\dot{H}^1}\lesssim \delta(u(t_0)).
\]

Now let $\tilde{v}(t)$ and $\tilde W$ be given by
\begin{equation*}
\tilde{v}(t,x)=u(t,x)-\tilde W(x)
\end{equation*}
Since $u(t)$ and $\tilde W$ both solve \eqref{1.1}, we have
\begin{align*}
(i\partial_{t}+\Delta) \tilde{v} = |\tilde W|^2 \tilde W - |u|^2 u.
\end{align*}
Observing that
\[
|\nabla [|\tilde W|^2 \tilde W - |u|^2u ]| \lesssim |\nabla \tilde v|\,|u|^2 + |\tilde v|\,|\nabla\tilde W|\,\{|u|+|\tilde W|\},
\]
we employ Strichartz estimates and Sobolev embedding to obtain
\begin{align*}
\|\nabla \tilde v\|_{S^0} & \lesssim \|\tilde v(t_0)\|_{\dot H^1} + \|\tilde v\|_{L_t^\infty \dot H_x^1}\|u\|_{L_t^3 L_x^{12}}^2 \\
&\quad + \|W\|_{\dot H^1} \bigl\{ \|u\|_{L_t^3 L_x^{12}}\|\tilde v\|_{L_t^3 L_x^{12}} + \|\tilde v\|_{L_t^3 L_x^{12}}^2\bigr\} \\
& \lesssim \|\tilde v(t_0)\|_{\dot H^1} + \|\nabla \tilde v\|_{S^0}\|u\|_{L_t^3 L_x^{12}}^2 + \|u\|_{L_t^3 L_x^{12}}\|\nabla \tilde v\|_{S^0}+ \|\nabla\tilde v\|_{S^0}^2,
\end{align*}
where all space-time norms are taken over $J\times\R^4$.  Using the fact that $\|u\|_{L_{t,x}^6}\leq M$ (which implies $\|\nabla u\|_{S^0}\lesssim_M 1$ via the local theory), we can split the interval $J$ into finitely many subintervals on which the $L_t^3 L_x^{12}$-norm of $u$ is small and run a standard bootstrap argument to obtain
\[
\|\nabla \tilde v\|_{S^0(J)} \lesssim_M \|\tilde v(t_0)\|_{\dot H^1}.
\]
Note that this requires choosing $\inf_{t\in J}\delta(u(t))$ sufficiently small depending on $M$.  Using this bound, we now readily obtain that
\begin{align*}
|\delta(u(t))|  & = \|\tilde W\|_{\dot H^1}^2 - \|u(t)\|_{\dot H^1}^2 \\
& \lesssim \|W\|_{\dot H^1} \|\tilde v(t)\|_{\dot H^1}  \lesssim_M \|\tilde v(t_0)\|_{\dot H^1}.
\end{align*}
Recalling \eqref{equ:delt0}, the result follows. \end{proof}

\begin{proof}[Proof of Theorem~\ref{t1.1}] Let $u_0\in \dot H^1$ satisfy 
\[
E(u_0)=E(W)\qtq{and}\|\nabla u_0\|_{L^2}<\|\nabla W\|_{L^2}.
\]
Let $u$ be the corresponding solution to \eqref{1.1} with $u|_{t=0}=u_0$.  By Proposition~\ref{T:no ftb}, $u$ is global. 

We assume that $u$ fails to scatter in both time directions, that is, $u$ blows up in at least one time direction.  To complete the proof, we must then show that $u=W^-$ modulo symmetries.

By Proposition~\ref{sequence}, there exists a sequence $\{t_n\}\subset\R$ such that $\delta(u(t_n))\to 0$. By Lemma~\ref{GlobalS} and continuity of the flow in $\dot H^1$ we must have $t_n\to\infty$ or $t_n\to-\infty$. 

We now observe that if $t_n\to\infty$, then $u$ blows up forward in time, and if $t_n\to-\infty$, then $u$ blows up backward in time.  Indeed, suppose $t_n\to\infty$ but $S_{[0,\infty)}(u)\leq M<\infty$.  Then, applying Lemma~\ref{scattersmall}, we obtain
\[
\delta(u_0)\leq \sup_{t\in[0,t_n]}\delta(u(t)) \lesssim_M \inf_{t\in[0,t_n]}\delta(u(t)) \to 0,
\]
which contradicts our assumptions on $u_0$.  The argument if $t_n\to-\infty$ is similar. 

Without loss of generality, we now assume $t_n\to\infty$ (and that $u$ blows up forward in time).  We split into two possible cases, depending on the small parameter $\eta_*$ introduced in Theorem~\ref{t2.3}.
\begin{itemize}
\item[(i)] There exists $t_0>0$ such that
\[
\sup_{t\in[t_0,\infty)}\delta(u(t)) \leq \eta_*.
\]
\item[(ii)] There exists an increasing sequence $t_n^-\to\infty$ such that
\[
\delta(u(t_n^-))>\eta_*\qtq{for all}n.
\]
\end{itemize}

In case (i), we may apply a time translation in order to assume $t_0=0$.  We can then directly apply Theorem~\ref{t2.3} to conclude that $u=W^-$ modulo symmetries and hence complete the proof.

It therefore remains to show that case (ii) cannot occur.  To this end, we assume case (ii) does occur and seek a contradiction.

First, passing to a subsequence, we may assume that $t_n$ is increasing,
\[
\delta(u(t_n)) < \eta_*, \qtq{and}t_n^- < t_n < t_{n+1}^- \qtq{for all} n.
\]
We now define the increasing sequence
\[
t_n^+ = \inf\bigl\{t\in\R: \sup_{\tau\in[t,t_n]} \delta(u(\tau))<\eta_*\bigr\} 
\]
and observe by continuity of $t\mapsto \delta(u(t))$ that 
\[
t_n^+ \to \infty\qtq{and} \delta(t_n^+)=\eta_*. 
\]

We now claim that
\begin{equation}\label{blowup+}
\lim_{n\to\infty} \|u\|_{L_{t,x}^6([t_n^+,t_n]\times\R^4)} = \infty. 
\end{equation}
Indeed, suppose instead that 
\[
\|u\|_{L_{t,x}^6([t_n^+,t_n]\times\R^4)} \leq M<\infty
\]
for all $n$. Then by Lemma~\ref{scattersmall} we have
\[
\eta_* = \delta(t_n^+)\lesssim_M \delta(t_n) \to 0,
\]
a contradiction. 

By Lemma~\ref{alcompact}, $u$ is almost periodic modulo symmetries on $[0,\infty)$.  Thus there exist $(\lambda_n,x_n)\in(0,\infty)\times\R^4$ and $w_0\in\dot H^1$ such that
\[
\lambda_n u(t_n^+,\lambda_n(\cdot+x_n)) \to w_0 \qtq{in}\dot H^1\qtq{as}n\to\infty. 
\]
Furthermore, we must have $\delta(w_0)=\eta_*$.  

We now let $w$ be the solution to \eqref{1.1} with $w|_{t=0}=w_0$. In particular, $w$ is a solution to \eqref{1.1} satisfying
\[
E(w)=E(W) \qtq{and}\|\nabla w_0\|_{L^2}<\|\nabla W\|_{L^2},
\]
so that (by Proposition~\ref{T:no ftb}) $w$ is global.  

We now claim that
\begin{equation}\label{final_claim}
S_{[0,\infty)}(w) = S_{(-\infty,0]}(w)=\infty,\qtq{while} \sup_{t\in[0,\infty)} \delta(w(t))\leq \eta_*.
\end{equation}
This then yields a contradiction to Theorem~\ref{t2.3}.  Indeed, using Theorem~\ref{t2.3}, we can conclude that $w=W^-$ modulo symmetries.  However, $w$ blows up in both time directions, while $W^-$ scatters in one time direction.

We turn to the proof of \eqref{final_claim}.  We define
\[
\tilde u_n(t,x) = \lambda_n u(t_n^+ + \lambda_n^2 t,\lambda_n(x+x_n))
\]

First, if $S_{[0,\infty)}(w_0)\leq M$, then by stability (Lemma~\ref{pertu}) we can obtain
\[
S_{[0,\infty)}(\tilde u_n) = S_{[t_n^+,\infty)}(u) \lesssim_M 1
\]
for all $n$ sufficiently large, contradicting \eqref{blowup+}.  Similarly, if $S_{(-\infty,0]}(w)\leq M$, then
\[
S_{(-\infty,0]}(\tilde u_n) = S_{(-\infty,t_n^+]}(u) \lesssim_M 1
\]
for all $n$ sufficiently large. As $t_n^+\to \infty$, this contradicts that $u$ blows up forward in time. 

Next, we fix $T>0$. By local well-posedness, we have that
\[
S_{[0,T]}(\tilde u_n) \lesssim_T 1,
\]
while by a change of variables we have
\[
S_{[0,\lambda_n^{-2}(t_n-t_n^+)]}(\tilde u_n)=S_{[t_n^+,t_n]}(u)\to\infty. 
\]
Thus we must have that $T<\lambda_n^{-2}(t_n-t_n^+)$ for all large $n$.  In particular, $t_n^++\lambda_n^2 T\in[t_n^+,t_n]$, so that by definition of $t_n^+$, the triangle inequality, a change of variables, and Lemma~\ref{pertu}, we have
\begin{align*}
\delta(w(T)) & \leq \delta(\tilde u_n(T)) + \|\tilde u_n(T)\|_{\dot H^1}^2 - \|w(T)\|_{\dot H^1}^2 \\
& \leq \delta(u(t_n^++\lambda_n^2 T)) + C\|u_n(T)-w(T)\|_{\dot H^1} \\
& \leq \eta_* + o(1)
\end{align*}
as $n\to\infty$. As $T>0$ was arbitrary, the second claim in \eqref{final_claim} follows. \end{proof}


\appendix

\section{Higher dimensions}\label{S:highd}

In this section, we briefly describe the extension of Theorem~\ref{t1.1} to dimensions $d\geq 5$. Standard topics such as well-posedness and stability are well-established, although we note that in dimensions $d\geq 6$ there are technical difficulties due to the low power of the nonlinearity.  Fortunately, these technical issues have been previously addressed in works such as \cite{KV1, li1, campos, Visan2, Tao1}, in the context of well-posedness and stability as well as in the context of results such as Proposition~\ref{2WW}. 

The main difference between dimension $d=4$ and dimensions $d\geq 5$ is the fact that in dimensions $d\geq 5$, one can obtain additional decay for almost periodic solutions, which substantially simplifies the arguments needed to obtain Proposition~\ref{sequence} (the existence of a sequence $\{t_n\}$ such that $\delta(u(t_n))\to 0$).  As described in the proof of Proposition~\ref{sequence}, we can reduce matters to proving this result for a global almost periodic solution $u$ satisfying the hypotheses of Theorem~\ref{t1.1} and the condition $\inf_{t\in\R}N(t)\geq 1$.  With Proposition~\ref{sequence} result in hand, the rest of the argument follows along similar lines (taking into account the modifications needed to treat the lower power nonlinearity as in \cite{li1, campos}).  Thus in the rest of this section, we focus on proving the analogue of Proposition~\ref{sequence}.

Following the approach of \cite{KV1}, we can obtain the following properties for an almost periodic solution with $\inf_{t\in\R} N(t)\geq 1$ in the case $d\geq 5$:
\begin{itemize}
\item $u\in L_t^\infty H_x^{-\eps}(\R\times\R^d)$ for some $\eps>0$, 
\item By modifying the compactness modulus function appropriately, we may take $N(t) \equiv 1$,
\item $x(t)=o(t)$ as $t\to\infty$. 
\end{itemize}
We remark that these arguments are insensitive to the sign of the nonlinearity, and in particular can be applied to any almost periodic solution.

With these properties in hand, the standard localized virial argument suffices to prove the existence of a sequence $t_n\to\infty$ such that $\delta(u(t_n))\to 0$. 

\begin{proposition} Let $u$ be a solution to \eqref{equ:encrinls} as above. There exists $t_n\to\infty$ such that $\delta(u(t_n))\to 0$.
\end{proposition}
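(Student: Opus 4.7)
The plan is to argue by contradiction via a truncated virial identity in the spirit of the proof of Proposition~\ref{bootstrap}, but with a truncation radius that grows with time in order to track the sublinearly growing spatial center $x(t)$. Suppose to the contrary that $\liminf_{t\to\infty}\delta(u(t))=\delta_0>0$ and fix $T_0$ so that $\delta(u(t))\geq \delta_0/2$ for all $t\geq T_0$. A preliminary step is to upgrade the hypothesis $u\in L_t^\infty H_x^{-\eps}$ together with the uniform $\dot H^1$-bound from Lemma~\ref{GlobalS} to a uniform $L^2$-bound: a Littlewood-Paley splitting ($H^{-\eps}$ handles low frequencies and $\dot H^1$ the high ones) yields $\|u(t)\|_{L_x^2}\leq M$ for all $t$.

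Now take $\varphi_R$ as in \eqref{start1} and define $M_R(u(t)):=2\,\mathrm{Im}\int \nabla\varphi_R\cdot\nabla u\,\bar u\,dx$. Cauchy-Schwarz immediately yields the uniform bound $|M_R(u(t))|\lesssim R$. A direct integration-by-parts calculation (using \eqref{equ:encrinls}, the Pohozaev identities, and $E(u)=E(W)=\tfrac{1}{d}\|\nabla W\|_{L^2}^2$) produces
\[
\tfrac{d}{dt}M_R(u(t)) = \tfrac{16}{d-2}\delta(u(t)) + F_R(u(t)),
\]
with an error term obeying
\[
|F_R(u(t))| \lesssim \int_{|x|\geq R}\bigl[|\nabla u|^2+|u|^{\frac{2d}{d-2}}\bigr]\,dx + R^{-2}\|u(t)\|_{L^2}^2.
\]

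To make $F_R$ uniformly small we exploit almost periodicity with $N(t)\equiv 1$: for any $\eta>0$ there exists $C(\eta)$ with $\int_{|x-x(t)|\geq C(\eta)}[|\nabla u|^2+|u|^{\frac{2d}{d-2}}]\,dx\leq \eta$ for all $t$. Fix a small $\eps>0$ and, using $x(t)=o(t)$, a time $T_1\geq T_0$ with $|x(t)|\leq \eps t/2$ for $t\geq T_1$; then set $R(T):=\eps T$. For $T$ sufficiently large we have $|x(t)|\leq R/2$ and $R/2\geq C(\eta)$ on $[T_1,T]$, so $\{|x|\geq R\}\subset\{|x-x(t)|\geq C(\eta)\}$ and $|F_{R(T)}(u(t))|\leq \eta + M^2/R^2$ there. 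Integrating the virial identity over $[T_1,T]$ yields
\[
\tfrac{8\delta_0}{d-2}(T-T_1) \leq 2CR(T) + \bigl(\eta + M^2/R(T)^2\bigr)(T-T_1).
\]
Dividing by $T$ and sending $T\to\infty$ gives $\tfrac{8\delta_0}{d-2}\leq 2C\eps+\eta$; taking $\eps,\eta$ sufficiently small then produces the desired contradiction. The main obstacle is the potential unboundedness of $x(t)$, which blocks the naive strategy of fixing $R$ and letting $T\to\infty$. The sublinearity $x(t)=o(t)$ is precisely what permits the choice $R=\eps T$ with $\eps$ arbitrarily small: after dividing by $T$, the uniform bound $|M_R|\lesssim R$ contributes only $O(\eps)$, so the growth of $x(t)$ is exactly compensated by the linear growth of the Morawetz weight.
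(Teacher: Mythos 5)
Your proposal is correct and follows essentially the same route as the paper's proof: the same truncated virial quantity $M_R$, the same uniform bound $|M_R|\lesssim R$ via the $L^2$ bound (which the paper likewise extracts from $u\in L_t^\infty H_x^{-\eps}\cap L_t^\infty\dot H_x^1$), the same choice $R=\eps T$ exploiting $x(t)=o(t)$, and the same conclusion after dividing by $T$. The only differences are cosmetic — you phrase it as a contradiction with $\liminf\delta>0$ rather than deriving the integral bound $\int_0^T\delta(u(t))\,dt\lesssim[\eps+\eta^{\frac{d-2}{2}}]T+T_{\eps,\eta}$ directly, and you bound the $\Delta\Delta\varphi_R$ term by $R^{-2}\|u\|_{L^2}^2$ instead of by a localized $L^{\frac{2d}{d-2}}$ norm — both of which are fine.
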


\begin{proof}[Sketch of the proof] Let $\varphi(x)$ be a smooth radial funtion satisfying $\varphi(x)=|x|^2$ for $|x|\leq 1$ and $\varphi(x)=C_0$ for $|x|\geq 2$.  We set $\varphi_R(x) = R^2\varphi(\tfrac{x}{R})$ and 
\[
M_R(t) = 2\Im\int \nabla \varphi_R(x)\cdot \nabla u(t,x)\bar u(t,x)\,dx.
\]
Using $u\in L_t^\infty L_x^2$, we have
\[
\sup_{t\in\R}|M(t)|\lesssim R\|u\|_{L_t^\infty \dot H_x^1} \|u\|_{L_t^\infty L_x^2} \lesssim R. 
\]
Next, by direct computation using \eqref{equ:encrinls} and integration by parts, we find 
\begin{align*}
\tfrac{d}{dt} M_R(t) & = \int -\Delta\Delta\varphi_R |u|^2 - \tfrac{4}{d}\Delta[\varphi_R]|u|^4 + 4\Re \bar u_j u_k \partial_{jk}[\varphi_R]\,dx \\
& = \tfrac{16}{d-2}\delta(u(t)) + F_R(u(t))
\end{align*}
where
\[
F_R(u) = O\biggl( \int_{|x|\geq R} |\nabla w|^2 + |w|^{\frac{2d}{d-2}}\,dx + \biggl(\int_{R\leq |x|\leq 2R} |w|^{\frac{2d}{d-2}}\,dx\biggr)^{\frac{d-2}{2}}\biggr). 
\]
Now let $\eps,\eta>0$.  We let $T\gg1$ and let $R=\eps T$. By the fundamental theorem of calculus,
\begin{equation}\label{easy-vir}
\begin{aligned}
\int_0^T \delta(u(t))\,dt  & \lesssim \sup_{t\in[0,T]}|M_R(u(t))| + \int_0^T |F_R(u(t))|\,dt \\
& \lesssim \eps T + \int_0^T |F_R(u(t))|\,dt.
\end{aligned}
\end{equation}
As $x(t)=o(t)$, there exists $T=T_{\eps,\eta}$ large enough that 
\[
\eps t - |x(t)|\geq C(\eta) \qtq{for all}t\geq T_{\eps,\eta} 
\]
for all $t>T_{\eps,\eta}$, where $C(\eta)$ is the compactness modulus function.  In particular, since $|x|\geq R$ implies $|x-x(t)|\geq C(\eta)$ for $t\geq T_{\eps,\eta}$, we have
\[
|F_R(u(t))| \lesssim \begin{cases} 1 & t\in[0,T_{\eps,\eta}] \\ \eta^{\frac{d-2}{2}} & t\in[T_{\eps,\eta},T].\end{cases}
\]
Continuing from \eqref{easy-vir}, we obtain
\[
\int_0^T \delta(u(t))\,dt  \lesssim [\eps+\eta^{\frac{d-2}{2}}] T + T_{\eta,\eps}.
\]
From this inequality, we can now deduce the existence of a sequence $t_n\to\infty$ along which $\delta(u(t_n))\to 0$.\end{proof}

\end{document}